\newtheorem{thm}{Theorem}       \newtheorem{propo}{Proposition}
       \newtheorem{coro}{Corollary}
\let\paragraph\subsection
\newcommand\scalemath[2]{\scalebox{#1}{\mbox{\ensuremath{\displaystyle #2}}}}
\title{Energized simplicial complexes}
\author{Oliver Knill} \date{8/18/2019}
\address{Department of Mathematics \\ Harvard University \\ Cambridge, MA, 02138 }
\begin{document}

\begin{abstract}
For a simplicial complex with $n$ sets, let $W^-(x)$ 
be the set of sets in $G$ contained in $x$ and $W^+(x)$
the  set of sets in $G$ containing $x$. A function $h: G \to \mathbb{Z}$ defines 
for every $A \subset G$ an energy $E[A]=\sum_{x \in A} h(x)$. The function energizes
the geometry similarly as divisors do in the continuum, where the Riemann-Roch quantity
$\chi(G)+{\rm deg}(D)$ plays the role of the energy.
Define the $n \times n$ matrices $L=L^{--}(x,y)=E[W^-(x) \cap W^-(y)]$ and 
$L^{++}(x,y) = E[W^+(x) \cap W^+(y)]$ as well as $L^{+-}(x,y) = E[W^+(x)  W^-(y)]=(L^{-+})^T$.
With the notation $S(x,y)=1_n \omega(x) =\delta(x,y) (-1)^{{\rm dim}(x)}$ and ${\rm str}(A)={\rm tr}(SA)$ 
define $g=S L^{++} S$. The results are: ${\rm det}(L)={\rm det}(g) = \prod_{x \in G} h(x)$ and 
$E[G] = \sum_{x,y} g(x,y)$ and $E[G]={\rm str}(g)$. The number of positive eigenvalues of 
$g$ is equal to the number of positive energy values of $h$. In special cases, more is true:
A) If $h(x) \in \{ -1, 1 \}$, all four homoclinic and heteroclinic 
matrices $L^{\pm \pm}$ are unimodular and $L^{-1} = g$, even if $G$ is a set of sets.
B) In the constant energy $h(x)=1$ case, $L$ and $g$ are isospectral, positive definite matrices
in $SL(n,\mathbb{Z})$ \cite{CountingMatrix}. For any set of sets $G$ we get so isospectral multi-graphs
defined by adjacency matrices $L^{++}$ or $L^{--}$ which have identical spectral or Ihara zeta
function. The positive definiteness holds for effective 
divisors $h>0$ in general. C) In the topological case $h(x)=\omega(x)$, the energy 
$E[G]={\rm str}(L) = {\rm str}(g) = \sum_{x,y} g(x,y)=\chi(G)$ is the Euler characteristic 
of $G$ and $\phi(G)=\prod_x \omega(x)$ \cite{EnergyComplex}, a product identity which holds for arbitrary set of
sets. D) For $h(x)=t^{|x|}$ with some parameter $t$ we have $E[H]=1-f_H(t)$ with 
$f_H(t)=1+f_0 t + \cdots + f_d t^{d+1}$ for the $f$-vector of $H$ and $L(x,y) = (1-f_{W^-(x) \cap W^-(y)}(t))$ and 
$g(x,y)=\omega(x) \omega(y) (1-f_{W^+(x) \cap W^+(y)}(t))$. Now, the inverse of $g$ is 
$g^{-1}(x,y) = 1-f_{W^-(x) \cap W^-(y)}(t)/t^{{\rm dim}(x \cap y)}$ and $E[G] = 1-f_G(t)=\sum_{x,y} g(x,y)$.
\end{abstract} 

\maketitle

\section{Introduction}

\paragraph{}
For a finite set $G$ of sets, the connection matrix $L$ is defined by $L(x,y)=1$ if $x$ and $y$ 
intersect and $L(x,y)=0$ if not. If $G$ is a finite abstract simplicial complex,
$L$ is unimodular. We look here at the case where $L(x,y)$ is an ``energy" of $x \cap y$. 
In the connection matrix case, the energy is the Euler characteristic of $x \cap y$.

\paragraph{}
Energizing a geometry $G$ using a function $h(x)$
is not only motivated by physics or probability density functions $h(x)$,
it also produces an affinity to algebraic geometry, as an integer-valued function with finite 
energy on an algebraic curve is a divisor. 
A combinatorial application is the construction of a large sample of isospectral
multi-graphs and related, the construction of isospectral, positive definite,
non-negative symmetric matrices $L^{++},L^{--}$ in $SL(n,Z)$. A set of sets $G$
defines a quadratic form, unimodular integral lattices
in $\mathbb{R}^n$ and isometric inner products with isometric ellipsoids
$(x,L^{++} x)=1, (x,L^{--} x)=1$.

\paragraph{}
Given a set of sets $G$, an energy function $h:G \to \mathbb{R}$ assigns an energy $\sum_{x \in A} h(x)$ 
to subsets $A$ of $G$ and in particular defines the total energy $E[G]=\sum_{x \in G} h(x)$. 
If $G$ is a simplicial complex and $h(x)=\omega(x)=(-1)^{{\rm dim}(x)}$, the matrix $L$ is 
unimodular \cite{EnergyComplex}. In this case, the energy $E[A]$ is the
Euler characteristic of a sub complex $A$ of $G$. We generalize this here first to $\{-1,1\}$-valued 
functions $h$. By allowing $h$ to be arbitrary real numbers
and also relaxing $G$ to be an arbitrary set of sets, we explore
also boundary cases, where things start to be different. 

\paragraph{}
The subject ties in with a variety of other topics \cite{AmazingWorld}. 
The unimodularity result has relations with hyperbolic dynamical systems, 
spectral theory, quadratic forms, integral lattices, zeta functions and the representation 
theory of graph arithmetic. Bringing in the discrete divisors in the form of 
energy relates to some discrete algebraic geometry: 
Riemann-Roch in the form of Baker-Norine theory \cite{BakerNorine2007} 
generalized to multigraphs in \cite{JamesMiranda2013} 
tells that the total energy $E[G]$ of a divisor is a signed distance $l(G)-l(K-G)$ 
with canonical divisor $K$ and where $l(G)$ is the least
energy value one has to add so that the structure can be made effective  
(non-negative everywhere) modulo changing the energy using principal divisors. 

\paragraph{}
We comment more on the Riemann-Roch situation elsewhere but for now just want to 
mention that in the discrete, Riemann-Roch is very concrete. 
While in the continuum, adding a principal divisor is done via rational
functions, the principal divisors in the discrete are just the image of a discrete Laplacian.
Adding such a divisor is physical in the energy picture: it means taking 
the energy on one node of the geometry and distributing the energy locally 
using the Laplacian given by the incidence graph of $G$, an operation which is energy preserving.

\paragraph{}
Also in the energized situation, the matrix entries $L$ and its inverse $g$ relate to
a hyperbolic dynamical system in which stable and unstable manifolds intersect 
in a ``heteroclinic tangle". A bit surprisingly, this structure appears 
in the simplest possible frame work of mathematics, where one just takes a finite set of 
sets. One does not even need to have a simplicial complex. But having the later structure
makes things nicer: the gradient system to the dimension functional is then Morse 
in the sense that every $x \in G$ is a hyperbolic critical point with index 
$\omega(x) \in \{-1,1\}$. The stable and unstable manifolds $W^-(x), W^+(x)$ 
are localized and the energy of their intersections define the matrices. 

\paragraph{}
As for $\{-1,1\}$-valued energies, one has unimodularity so that $g=L^{-1}$ is integer valued.
This includes not only the original topological frame work $h(x)=\omega(x)$ 
but also in a situation close to quantum physics:
in the manifold case, where the kernels $g$ of the inverse of 
Laplacians are singular and the potential $V_x(y) = g(x,y)$ is long range; now, in the discrete, 
the interaction is finite range and can be of service in relativistic frame works. 
We remain here in a elementary combinatorial and linear algebra set-up. Much even 
generalizes to sets of sets $G$, a situation obtained by dropping the only axiom.

\paragraph{}
We still should stress that having a mass gap, an interval around $0$ without 
spectrum of the Laplacian is rather special as it also can survive the van Hove limit 
in which we look at an infinite structure like $\mathbb{Z}^d$. This is not automatically 
a consequence of a discretization. Indeed, the most obvious discrete Laplacian $H$ in mathematics 
does not have this property: if $d$ is the exterior derivative defined on $G$
and if $H=d d^* + d^* d=(d+d^*)^2$ is the Hodge Laplacian on the set of differential forms, 
then $H = \oplus_k H_k$  is a $n \times n$ matrix which 
is block diagonal and the nullity of $H_k$ is the Betti number $b_k$ of $G$ by Hodge theory.
Even when inverting $H_k$ on the ortho-complement of the harmonic forms, the pseudo inverse 
matrix entries $H_k^{-1}(x,y)$ are long-range, meaning that two points $x,y$ in general interact, 
even if they are arbitrarily far apart.

\paragraph{}
Beside the situation where $x$ has an energy $h(x)=\omega(x)$, we have also seen the case 
where the energy is constant $1$. In this ``effective divisor" case, the matrices $L$ and $g$ are in 
$SL(n,\mathbb{Z})$ and are positive definite as well as isospectral. It is a bit surprising that 
the result still holds also in the case when $G$ is just a set of sets. The 
matrices $L^{--}$ and $L^{++}$ are then integer valued symmetric matrices which are isospectral. 
Each of them can be seen as the adjacency matrix of a multi-graph $\Gamma^{++}$ and $\Gamma^{--}$. 
The fact that the matrices are isospectral means than that the graphs are isospectral. 
There are not many general tools to generate isospectral geometries, one is by Sunada
(like for example used in \cite{HalbeisenHungerbuehler}) which is relevant in 
structural chemistry \cite{RandicNovicPlavsic}.

\section{The inverse}

\paragraph{}
A finite abstract simplicial complex
is a set of finite non-empty sets closed under the operation of taking finite non-empty subsets.
For $x \in G$, the star $W^+(x)$ of $x$ is the set of simplices which contain $x$ (including $x$).
The core $W^-(x)$ is the set of simplices contained in $x$ (including $x$).
Define $\omega(x) = (-1)^{{\rm dim}(x)}$, where ${\rm dim}(x) = |x|-1$ and $|x|$ is the 
cardinality of $x$. 

\paragraph{}
Define the matrix $S(x,y) = \delta(x,y) \omega(x)$ which has as trace the Euler characteristic
$\chi(G)$ and which defines a super trace ${\rm str}(A) = {\rm tr}(SA)$ for any $n \times n$ matrix $A$.
A function $h: G \to \mathcal R$ defines an energy of subsets $A$ of $G$:
$$ E[A] = \sum_{x \in A} h(x) \; . $$

\paragraph{}
Define the $n \times n$ matrices defined by homoclinic connections
$$  L^{--}(x,y) = E[ W^-(x) \cap W^-(y)],  L^{++}(x,y) = E[ W^+(x) \cap W^+(y)] \; . $$
For completeness, one can also define the heteroclinic connection matrices
$$  L^{+-}(x,y) = E[ W^+(x) \cap W^-(x)],  L^{-+}(x,y) = E[ W^-(x) \cap W^+(x)]  \;  $$
even so do not need them here. The later are upper and lower triangular if the simplicial 
complex $G$ is ordered from smaller to larger dimensional simplices. 
To be closer to the previous covered special cases 
\cite{EnergyComplex,CountingMatrix}
we can define $L=L^{--}$ and $g=S L^{++} S$, which is conjugated to $L^{++}$ as $S=S^{-1}$.
All these matrices are integer-valued if $h$ is integer-valued.

\paragraph{}
For the next result, we assume that $G$ is sorted so that if $x \subset y$, then $x$ comes 
before $y$ in the listing of the sets in $G$. An order of $G$ together defines the 
basis in which the matrices are written. 

\begin{thm}
The product $Lg$ is a lower triangular matrix with diagonal entries $h(x)^2$.
The entries of $Lg$ below the diagonal are all of the form $h(u)^2 - h(v)^2$.
\end{thm}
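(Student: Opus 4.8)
The plan is to compute the matrix product entrywise and collapse it with a single Euler-characteristic identity. First I would write $(Lg)(x,y)=\sum_{z}L(x,z)\,g(z,y)$ and substitute $g(z,y)=\omega(z)\omega(y)L^{++}(z,y)$. Expanding the two energies, and using that in a simplicial complex $W^-(x)\cap W^-(z)=\{u\in G:\ u\subseteq x,\ u\subseteq z\}$ and $W^+(z)\cap W^+(y)=\{v\in G:\ z\subseteq v,\ y\subseteq v\}$, one obtains the triple sum
$$ (Lg)(x,y)=\omega(y)\sum_{z}\omega(z)\!\!\sum_{\substack{u\subseteq x\\ u\subseteq z}}\!\! h(u)\!\!\sum_{\substack{z\subseteq v\\ y\subseteq v}}\!\! h(v), $$
where $z$ runs over $G$. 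I would then interchange the order of summation, pulling the sum over $z$ inside, so that for each fixed pair $(u,v)$ with $u\subseteq x$ and $y\subseteq v$ the variable $z$ runs over all simplices with $u\subseteq z\subseteq v$.

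The key step is the inner sum $\sum_{u\subseteq z\subseteq v}\omega(z)$. Because $G$ is a simplicial complex, every $z$ with $u\subseteq z\subseteq v$ is contained in $v\in G$ and therefore lies in $G$; the interval is thus the full Boolean lattice on $v\setminus u$, and the binomial identity $\sum_{j}\binom{k}{j}(-1)^{j}=(1-1)^{k}$ with $k=|v|-|u|$ yields $\sum_{u\subseteq z\subseteq v}\omega(z)=\delta(u,v)\,\omega(u)$. This is the crux: it annihilates every contribution except the diagonal $u=v$, so the double sum over $(u,v)$ collapses to a single sum over $u=v$ with $y\subseteq u\subseteq x$.

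Substituting back gives the closed form
$$ (Lg)(x,y)=\omega(y)\!\!\sum_{y\subseteq u\subseteq x}\!\!\omega(u)\,h(u)^2 \;=\; \sum_{T\subseteq x\setminus y}(-1)^{|T|}\,h(y\cup T)^2, $$
which vanishes unless $y\subseteq x$. From this all three assertions follow at once. A nonzero entry forces $y\subseteq x$, hence $y$ precedes or equals $x$ in the chosen order, so $Lg$ is lower triangular. On the diagonal $x=y$ only the term $u=x$ survives and $\omega(x)^2=1$ gives $(Lg)(x,x)=h(x)^2$. Below the diagonal the coefficients $\omega(y)\omega(u)=\pm1$ sum to zero over the interval, so the entry is a $\mathbb{Z}$-combination of differences $h(u)^2-h(v)^2$; when $x$ covers $y$ (codimension one) the interval has exactly two elements and the entry is literally $h(y)^2-h(x)^2$, the advertised form.

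The main obstacle is the interval identity together with its hypothesis: the cancellation uses in an essential way that the order interval $[u,v]$ inside $G$ is complete, which is exactly the closure-under-subsets axiom of a simplicial complex; for a general set of sets intermediate simplices can be missing and $Lg$ need not be triangular. A secondary point to handle with care is the below-diagonal shape, since the derivation produces a higher finite difference $\sum_{T}(-1)^{|T|}h(y\cup T)^2$: one should state precisely that this is the single difference $h(u)^2-h(v)^2$ in the covering case and a telescoping sum of such differences in general. I would also verify the two intersection descriptions at the outset, as they too rely on $G$ being closed under subsets (so that $x\cap y\in G$ whenever it is nonempty).
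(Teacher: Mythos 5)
Your proof is correct and, while it ultimately rests on the same parity cancellation the paper uses, it organizes the argument in a genuinely more systematic way. The paper handles the three cases (diagonal, above-diagonal, below-diagonal) by separate and rather terse arguments about which sets can contribute, invoking the even/odd cancellation over a complete Boolean interval only in case (c); the crucial reduction of the double sum over pairs $(u,v)$ to the diagonal $u=v$ is left implicit in cases (a) and (b). You instead interchange the order of summation once and apply the interval identity $\sum_{u\subseteq z\subseteq v}\omega(z)=\delta(u,v)\,\omega(u)$, arriving at the closed form
$$(Lg)(x,y)=\omega(y)\sum_{y\subseteq u\subseteq x}\omega(u)\,h(u)^2 ,$$
from which all three assertions follow at a glance; the formula also reproduces the paper's worked examples (e.g.\ the entry $a^2-d^2-x^2+z^2$ in the second example). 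What your route buys is a single checkable identity in place of three sketches, an explicit localization of where the simplicial-complex axiom is used (completeness of the order intervals $[u,v]$, which you correctly note fails for general sets of sets, so the triangularity can break there), and a precise reading of the phrase ``of the form $h(u)^2-h(v)^2$'' as a sum of $2^{|x\setminus y|-1}$ such differences, literal only in the covering case. One small quibble: the descriptions $W^-(x)\cap W^-(z)=\{u\in G:\ u\subseteq x,\ u\subseteq z\}$ and its dual hold by definition for any set of sets and do not themselves require the closure axiom; that axiom is needed only to guarantee that every $z$ with $u\subseteq z\subseteq v$ actually lies in $G$.
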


\begin{proof}
a) To check the diagonal entries, we have to verify
$$  \sum_y L^{--}(x,y) \omega(y) \omega(x) L^{++}(y,x) = h(x)^2 \; .  $$
Since $L^{--}(x,y)$ is the energy of all the sets in $x \cap y$
$L^{++}(y,x)$ is the energy of all sets containing $x \cup y$
the contribution of $y=x$ is $h(x)^2$. The claim therefore is
$$  \sum_{y \neq x} L^{--}(x,y) \omega(y) \omega(x) L^{++}(y,x) = 0 \; .  $$
As any set $u$ contributing to $L(x,y)$ must be strictly inside $x$
and any set $u$ contributing to $g(y,x)$ must contain $x$ strictly, 
the statement follows.  \\
b) If $x \subset z$ but not $x= z$, the upper triangular entry must be
$$  \sum_y L^{--}(x,y) \omega(y) \omega(z) L^{++}(y,z) = 0 \; .  $$ 
To see this, note that set $u$ contributing to $L^{--}(x,y)$ must be contained in $x$
and any set $u$ contributing to $L^{++}(y,z)$ must contain $z$. 
There is no such set and therefore, the answer is $0$.  \\
c) If $z \subset x$ then, the entry
$$  \sum_y L^{--}(x,y) \omega(y) \omega(z) L^{++}(y,z)   $$ 
is a sum of of differences $h(u)^2 - h(v)^2$.  \\
To see this, look at a set $u$ which is a subset of $x$ and 
contains $z$. Show that they appear in pairs. The reason is that the complement of $z$
in $x$ is a complete complex including the empty complex. The even and odd ones
appear with the same cardinality.
\end{proof}

\paragraph{}
This leads immediately to the corollary: 

\begin{thm}
If $h$ takes values in $\{-1,1\}$, then $g$ is the inverse of $L$.
\end{thm}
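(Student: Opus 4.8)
The plan is to read this off directly from Theorem~1, which already computes the product $Lg$ entrywise for an \emph{arbitrary} energy function $h$; the sole role of the hypothesis $h(x) \in \{-1,1\}$ is to collapse every entry of $Lg$ to the corresponding entry of the identity. First I would invoke Theorem~1 to record that $Lg$ is lower triangular, that its diagonal entries are $h(x)^2$, and that each strictly-below-diagonal entry is a sum of terms of the shape $h(u)^2 - h(v)^2$. Under the hypothesis we have $h(x)^2 = 1$ for every $x \in G$, so the diagonal of $Lg$ is the all-ones vector; simultaneously every summand satisfies $h(u)^2 - h(v)^2 = 1 - 1 = 0$, so each off-diagonal entry vanishes. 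Hence $Lg = I_n$.

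It then remains only to upgrade the one-sided identity $Lg = I_n$ to the two-sided statement $g = L^{-1}$. Since $L$ and $g$ are both $n \times n$ matrices over the commutative ring $\mathbb{Z}$ (they are integer-valued because $h$ is), the relation $Lg = I_n$ forces $\det(L)\,\det(g) = 1$, so both determinants are units, i.e.\ $\pm 1$. In particular $L$ is invertible, and left-multiplying $Lg = I_n$ by $L^{-1}$ gives $g = L^{-1}$, with $gL = I_n$ following automatically. As a by-product this re-proves the unimodularity $\det(L) = \pm 1$ asserted in the abstract for the $\{-1,1\}$ case.

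I do not expect a genuine obstacle here: the statement is an immediate corollary of Theorem~1, and the only mild care needed is the passage from a one-sided to a two-sided inverse, which is routine over $\mathbb{Z}$. The real content lives inside Theorem~1 itself, specifically part~(c), where the off-diagonal entries are organized into differences $h(u)^2 - h(v)^2$ using the observation that the subsets of $x \setminus z$ with even and odd cardinality occur in equal number. In writing the corollary I would stress that this cancellation is structural, coming from the identity $\sum_{S \subseteq x \setminus z} (-1)^{|S|} = 0$ over the Boolean interval between $z$ and $x$ whenever $z \subsetneq x$, rather than being an accident of the particular values $\pm 1$; the specialization $h \in \{-1,1\}$ merely makes the already-cancelling pairs individually zero.
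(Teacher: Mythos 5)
Your proposal matches the paper's own route exactly: the paper states this result as an immediate corollary of Theorem~1, obtained by observing that $h(x)^2=1$ makes the diagonal entries of $Lg$ equal to $1$ and $h(u)^2-h(v)^2=0$ kills the sub-diagonal entries. Your extra care in upgrading the one-sided identity $Lg=I_n$ to a two-sided inverse over $\mathbb{Z}$ is a harmless refinement the paper leaves implicit.
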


\paragraph{}
Any ordering of $G$ defines a basis. An other ordering would lead to a conjugated matrices
which would in general no more be lower triangular. For example, if we would order the 
simplices with the largest simplices first, the matrix $L g$ would be upper triangular. 
We see from the proof that if the energy $h$ takes values $\{-a,a\}$, with some real non-zero
$a$, then $L g$ is $a^2$ times the identity matrix, implying the corollary. 

\section{Determinant}

\paragraph{}
The next result deals with determinants. For any real-valued function $h: G \to \mathbb{R}$
define its Fermi characteristic
$$  \prod_{x \in G} h(x) \; . $$ 
It is a multiplicative version of 
the total energy $E[G]=\sum_{x \in G} h(x)$. All the homoclinic or heteroclinic matrices
$L^{++},L^{--},L^{+-},L^{-+}$ have the same determinant. The result is a bit more general now
as it does not require $G$ to be a simplicial complex. It could be a finite topology or finite
Boolean algebra for example.

\begin{thm}[Determinant]
If $G$ is a finite set of sets, then 
${\rm det}(L) = {\rm det}(g) = \prod_{x} h(x)$. 
\end{thm}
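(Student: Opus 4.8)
The plan is to exhibit each of $L$ and $g$ as a congruence transform of the diagonal energy matrix, so that the determinant drops out of a triangular factorization. Write $D$ for the $n \times n$ diagonal matrix ${\rm diag}(h(u))_{u \in G}$, and let $A$ be the incidence (zeta) matrix of the inclusion relation, $A(x,u)=1$ if $u \in W^-(x)$ (that is, $u \subseteq x$ with $u \in G$) and $A(x,u)=0$ otherwise. Since $L^{--}(x,y)=E[W^-(x) \cap W^-(y)] = \sum_{u \subseteq x \cap y} h(u) = \sum_u A(x,u) h(u) A(y,u)$, we obtain the factorization $L = L^{--} = A D A^T$. The identical construction with $B(x,u)=1$ iff $u \in W^+(x)$ gives $L^{++} = B D B^T$.

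The second step is to compute $\det A$ and $\det B$ using the sorted order already fixed for Theorem 1 (small sets before large). If $u \subsetneq x$ then $u$ precedes $x$, so the nonzero off-diagonal entries of $A$ lie strictly below the diagonal; since $A(x,x)=1$ for every $x$, the matrix $A$ is lower triangular with unit diagonal and $\det A = 1$. Symmetrically, $u \in W^+(x)$ with $u \neq x$ forces $x \subsetneq u$, so $u$ follows $x$ and $B$ is upper triangular with unit diagonal, whence $\det B = 1$. By multiplicativity of the determinant, $\det L = \det(A)\,\det(D)\,\det(A^T) = \prod_u h(u)$ and likewise $\det L^{++} = \prod_u h(u)$.

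Finally, I would pass from $L^{++}$ to $g$. Because $S$ is diagonal with entries $\pm 1$, it satisfies $\det(S)^2 = 1$, and $g = S L^{++} S$ yields $\det g = \det(S)^2\,\det L^{++} = \det L^{++} = \prod_u h(u)$. Combining the two computations gives $\det L = \det g = \prod_x h(x)$, as claimed.

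I do not expect a serious obstacle: the only point that needs care is that the argument never invokes the simplicial-complex axiom, so it remains valid for an arbitrary finite set of sets. This works because a linear extension of the partial order by inclusion exists for any finite poset, and fixing such an order is exactly what makes $A$ and $B$ simultaneously triangular. By contrast, the finer Theorem 1 about the shape of $Lg$ did exploit the complex structure, so one should resist deriving the determinant merely from $\det(Lg)=\prod_x h(x)^2$; that identity only pins down the product $\det L \cdot \det g$ and leans on the extra structure, whereas the $ADA^T$ factorization settles each determinant separately and in full generality.
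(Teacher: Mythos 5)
Your proposal is correct, and it takes a genuinely different route from the paper. The paper proves the determinant formula by induction on the number of sets: it isolates the last cell $x$, notes that the corresponding column of $L^{++}$ has entries $0$ or $h(x)$, and argues that the permutation terms of the Leibniz expansion which "interact with $x$" cancel in pairs, leaving $h(x)$ times the determinant of the smaller configuration. Your argument instead exhibits the closed-form congruence $L^{--}=ADA^{T}$ and $L^{++}=BDB^{T}$, where $D=\mathrm{diag}(h(u))$ and $A(x,u)=[\,u\subseteq x\,]$, $B(x,u)=[\,x\subseteq u\,]$ are the zeta matrices of the inclusion order; the identity $\sum_{u\in G}A(x,u)h(u)A(y,u)=E[W^-(x)\cap W^-(y)]$ is immediate from the definitions, a linear extension of the inclusion order makes $A$ unitriangular (lower) and $B$ unitriangular (upper), and $\det S=\pm1$ transfers the result from $L^{++}$ to $g$. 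Both proofs use only the poset structure and hence work for arbitrary finite sets of sets, but yours is shorter, avoids the somewhat delicate pairing/cancellation step, and buys more than the determinant: since $A$ and $B$ are invertible, Sylvester's law of inertia applied to $L=ADA^{T}$ instantly yields the paper's later theorem that the number of negative eigenvalues of $L^{--}$ (or $L^{++}$, or $g$) equals the number of negative values of $h$, and the case $h>0$ gives positive definiteness with no further work. Your closing caveat is also well placed: $\det(Lg)=\prod_x h(x)^2$ would only determine the product of the two determinants, so the factorization argument is genuinely needed to settle each one.
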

\begin{proof} 
The matrix $L^{++}$ has only entries $0$ or $h(x)$ in the last column because 
$L^{++}(x,y)=h(x)$ if $y \subset x$ and $L^{++}(x,y)=0$ else. 
$$ g = \left[ \begin{array}{ccccccc}
    g(1,1) & g(1,2) &     . & . & . & g(1,n)    &  b_1 h(x) \\
    g(2,1) & g(2,2) &     . & . & . & .        &  b_2 h(x) \\ 
      .    & .      &     . & . & . & .      &  b_3 h(x) \\
      .    & .      &     . & . & . & .      & .         \\
      .    & .      &     . & . & . & .      & .         \\
    g(n,1) & .      &     . & . & . & g(n,n) &  b_n h(x) \\ 
    b_1 h(x) & b_2 h(x) &\dots  & \dots & \dots  & b_n h(x) &  h(x)  \\
            \end{array} \right]  \;    $$
where $b_i \in \{0,1\}$. 
Look at all the possible paths in the determinant
which enter the neighborhood of $x$. These interaction paths come in pairs
one which has $x$ as a fixed point and goes $yz$ 
and which which does not and goes $y x z$. These pairs cancel. The only paths left
are the paths which do not interact with $x$. But that means that we have
the situation where $x$ is separated. If $x$ is gone, then any two pair $y,z$  of $S(x)$
are not connected in $g$ as $W^+(y) \cap W^-(y)=0$. 
\end{proof}

\paragraph{}
This implies in the case $h(x) \in \{-1,1\}$ that both matrices $L,g$ are unimodular. If $h(x)$
is constant $1$ \cite{CountingMatrix}, the case where the energy simply counts simplices, 
we additionally know that $L,g$ are in $SL(n,\mathbb{Z})$. 

\begin{coro}[Unimodularity]
If $G$ is a finite set of sets and $h$ takes values in $\{-1,1\}$ then 
${\rm det}(L) = {\rm det}(g) = \prod_{x} h(x)$ takes values in $\{-1,1\}$.
The matrices $L^{++},L^{--},L$ and $g$ are then all unimodular. 
\end{coro}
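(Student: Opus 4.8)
The plan is to derive this entirely from the Determinant theorem, which already computes the two determinants we care about; the corollary is then just the observation that a product of $\pm 1$'s is again $\pm 1$, together with a short bookkeeping argument relating the four named matrices.

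First I would invoke the Determinant theorem directly. Since $G$ is a finite set of sets, it gives $\det(L) = \det(g) = \prod_{x \in G} h(x)$. Now if $h$ takes values in $\{-1,1\}$, every factor $h(x)$ equals $\pm 1$, so the Fermi characteristic $\prod_x h(x)$ is a product of signs and hence itself lies in $\{-1,1\}$. This already establishes the displayed equalities and the claimed range of the determinant. Because $h$ is integer-valued, every energy $E[A] = \sum_{x \in A} h(x)$ is an integer, so all four matrices $L^{++}, L^{--}, L, g$ have integer entries; combined with determinant $\pm 1$ this is exactly the definition of unimodularity.

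It remains to pass from the two matrices $L$ and $g$ appearing in the Determinant theorem to all four matrices in the statement. Here I would use the identifications already fixed in the text: $L = L^{--}$, so $\det(L^{--}) = \prod_x h(x) = \pm 1$ is immediate, while $g = S L^{++} S$ with $S = S^{-1}$ and $S$ diagonal with entries $\omega(x) \in \{-1,1\}$. Since $\det(S)^2 = \det(S S) = \det(S S^{-1}) = 1$, conjugation by $S$ preserves determinants, whence $\det(L^{++}) = \det(g) = \prod_x h(x) = \pm 1$. Thus each of $L^{++}, L^{--}, L, g$ is an integer matrix of determinant $\pm 1$, i.e.\ unimodular.

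I do not expect any genuine obstacle here: the analytic content sits entirely in the Determinant theorem (the cancellation-of-paths argument through the vertex $x$), and what is left is the elementary remark that $\{-1,1\}$ is closed under multiplication and that conjugation by the involution $S$ leaves a determinant unchanged. The only point requiring care is consistency of the conventions $L = L^{--}$ and $g = S L^{++} S$, so that the two determinants computed earlier are correctly attached to the four matrices named in the corollary.
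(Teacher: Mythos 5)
Your proposal is correct and matches the paper's own (very brief) derivation: the corollary is stated as an immediate consequence of the Determinant theorem, with the only content being that a product of values in $\{-1,1\}$ again lies in $\{-1,1\}$ and that integrality of the entries plus determinant $\pm 1$ gives unimodularity. Your extra care in tracking $L=L^{--}$ and $g=SL^{++}S$ (conjugation by the involution $S$ preserving the determinant) is a sound way to extend the conclusion to all four named matrices, consistent with the paper's remark that all homoclinic and heteroclinic matrices share the same determinant.
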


\paragraph{}
When $h(x)= (-1)^{{\rm dim}(x)}$ we interpreted $\sum_{x \in G} h(x) = {\rm E}[G]$ 
as a Poincar\'e-Hopf result, and interpreted $h(x)$ as an index. 
The formula $\sum_{x \in G} \omega(x) E[S(x)] = {\rm E}[G]$ 
was a dual Poincar\'e-Hopf result used in the original proof of the energy theorem. 

\paragraph{}
Similarly, $\prod_x h(x) = {\rm det}(L)$ can be seen as a multiplicative
Poincar\'e-Hopf result. Actually, one can see the identity
${\rm E}[G]=\sum_x \omega(x) E[W^+(x)]$ as a Poincare-Hopf result in general. 

\section{The eigenvalues}

\paragraph{}
Related it determinants is the next result on eigenvalues. For a symmetric invertible matrix $A$, 
define the Morse index of $A$ as the number of negative eigenvalues of $A$. 
Define also the Morse number of the energy function $h$ as the number of negative eigenvalue 
entries. There is some relation between the function $h$ and the eigenvalues. 
We could formulate it for the matrices $L^{++},L^{--},L^{+-},L^{-+}$, where it is true also 
but do it for the two matrices $L=L^{--}$ and $g=S L^{++} S$: 

\begin{thm}
The number of negative eigenvalues of $L=L^{--}$ is equal to the number of 
negative values of the energy function $h$. The same is true for the matrices $L^{++}$ or $g$. 
\end{thm}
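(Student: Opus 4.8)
The plan is to reduce everything to \emph{Sylvester's law of inertia}: the number of negative eigenvalues of a real symmetric matrix is invariant under a congruence $M \mapsto P^T M P$ by an invertible real matrix $P$. It therefore suffices to exhibit an invertible $P$ for which $P^T L P$ equals the diagonal matrix $D$ with entries $D(u,u)=h(u)$, since the number of negative eigenvalues of $D$ is, by definition, the Morse number of $h$, i.e. the number of negative values of $h$.

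The candidate for $P$ is the containment incidence matrix. First I would define $A$ by $A(u,x)=1$ if $u \in W^-(x)$ (that is, $u \subseteq x$) and $A(u,x)=0$ otherwise, and verify the factorization $L=A^T D A$: indeed $(A^T D A)(x,y) = \sum_u A(u,x)\,h(u)\,A(u,y) = \sum_{u \subseteq x,\ u \subseteq y} h(u) = E[W^-(x) \cap W^-(y)] = L^{--}(x,y)$. Next, using the standing assumption that $G$ is sorted so that $u \subseteq x$ forces $u$ to precede $x$, the only off-diagonal nonzero entries $A(u,x)$ have $u \subsetneq x$, hence row index strictly below column index; thus $A$ is upper triangular with $1$'s on the diagonal, so $\det A = 1$ and $A$ is invertible. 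Sylvester's law then gives that $L$ and $D$ have the same inertia, in particular the same number of negative eigenvalues. As a sanity check this also recovers $\det L = (\det A)^2 \det D = \prod_x h(x)$, consistent with the Determinant theorem.

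For $L^{++}$ I would run the mirror-image argument with the star incidence matrix $B(u,x)=1$ iff $u \in W^+(x)$, i.e. $x \subseteq u$; the same computation yields $L^{++} = B^T D B$, and with the same ordering $B$ is lower triangular with unit diagonal, hence invertible, so $L^{++}$ is also congruent to $D$. Finally $g = S L^{++} S$ with $S$ diagonal and $S = S^T = S^{-1}$, so $g = S^T L^{++} S$ is itself a congruence of $L^{++}$. Hence $g$, $L^{++}$, $L^{--}$ and $D$ all share the inertia of $D$, which proves the claim for all three matrices simultaneously.

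I do not expect a serious obstacle: the whole argument rests on spotting the incidence factorization $L = A^T D A$ and observing that the sorting hypothesis makes $A$ unitriangular. The one point demanding a little care is that Sylvester's law requires a \emph{real} invertible congruence, not merely a unimodular integer one; but since $A$ has determinant $1$ it is automatically invertible over $\mathbb{R}$, so the hypothesis is met. I would likewise note explicitly that $L$ is genuinely symmetric, since $W^-(x) \cap W^-(y)$ is symmetric in $x$ and $y$, so that the notion of inertia is well defined to begin with.
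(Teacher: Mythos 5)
Your proof is correct, and it takes a genuinely different route from the paper. The paper argues by induction on the number of cells: it adds one set $x$ at a time, scales the new row and column by a parameter $t$, and uses the fact that the determinant is linear in $t$ and interpolates between $\prod_{k\le n} h(k)$ and $\prod_{k\le n+1} h(k)$ to argue that no eigenvalue crosses zero during the deformation when the new energy is positive; the sign count is then updated cell by cell. Your argument instead exhibits the explicit congruence $L^{--}=A^T D A$ with $A$ the unitriangular containment incidence matrix and $D={\rm Diag}(h)$, and invokes Sylvester's law of inertia. What your approach buys is considerable: it is a one-step, non-inductive proof; it delivers the full inertia (numbers of positive, negative, \emph{and} zero eigenvalues) rather than just the negative count; it works verbatim for arbitrary sets of sets; and it sidesteps the delicate point in the deformation argument that when $h(x)<0$ the determinant does vanish at some intermediate $t_0$, so one must separately argue that exactly one eigenvalue crosses from positive to negative there — a step the paper leaves implicit. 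Your factorization also recovers the Determinant theorem $\det L=\prod_x h(x)$ for free, and the mirror factorization $L^{++}=B^T D B$ together with $g=S^T L^{++} S$ handles the remaining matrices uniformly. The only cost is that the congruence argument does not by itself give the isospectrality statements the paper's deformation machinery is also aimed at; for this theorem alone, your route is the cleaner one.
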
 

\begin{proof}
This goes by induction. Let $x$ be the latest cell added to the CW complex. 
Again look at the matrix $g$ and multiply the last entry with $t$:
$$ g = \left[ \begin{array}{ccccccc}
    g(1,1) & g(1,2) &     . & . & . & g(1,n)    &  t b_1 h(x) \\
    g(2,1) & g(2,2) &     . & . & . & .         &  t b_2 h(x) \\ 
      .    & .      &     . & . & . & .         &  t b_3 h(x) \\
      .    & .      &     . & . & . & .         & .         \\
      .    & .      &     . & . & . & .         & .         \\
    g(n,1) & .      &     . & . & . & g(n,n)    &  t b_n h(x) \\ 
t b_1 h(x) & t b_2 h(x) & . & . & . & \dots     & t b_n h(x)   \\
            \end{array} \right]  \;    $$
For $t=0$ we have the determinant $E=\prod_{k=1}^n e(k)$, for $t=1$
we have the determinant $\prod_{k=1}^{n+1} e_k = E h(x)$. Assume there exists a $t_0$
such that the determinant of $g(t_0)=0$. It is linear in $t$.  
A linear function between two positive values is never $0$. 
\end{proof} 

\paragraph{}
This implies:

\begin{coro}
If $h$ is takes only positive values, then both $L,g$ are positive definite.
\end{coro}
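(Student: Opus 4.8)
The plan is to read this off directly from the two theorems already established, namely the Determinant theorem (${\rm det}(L)={\rm det}(g)=\prod_x h(x)$) and the immediately preceding eigenvalue theorem (the number of negative eigenvalues of $L$, of $L^{++}$, or of $g$ equals the number of negative values of $h$). The only additional ingredients I need are that both matrices are symmetric and that neither has a zero eigenvalue; once those are in hand, the statement is a bookkeeping consequence of the two prior results.

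First I would record the symmetry. The entry $L^{--}(x,y)=E[W^-(x)\cap W^-(y)]$ is manifestly symmetric in $x$ and $y$, since $W^-(x)\cap W^-(y)=W^-(y)\cap W^-(x)$ and $E$ depends only on the underlying set of sets; the same reasoning makes $L^{++}$ symmetric. Because $S$ is diagonal with entries $\pm 1$ and satisfies $S=S^T=S^{-1}$, the matrix $g=S L^{++} S$ is symmetric as well. Hence both $L$ and $g$ have real spectra and the notion of positive definiteness is meaningful.

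Next I would feed in the hypothesis $h>0$. Under this assumption the number of negative energy values of $h$ is zero, so by the eigenvalue theorem neither $L$ nor $g$ has any negative eigenvalue. To upgrade ``no negative eigenvalue'' to ``positive definite'' I must still exclude the eigenvalue $0$, and this is exactly where the Determinant theorem enters: ${\rm det}(L)={\rm det}(g)=\prod_x h(x)>0$ because every factor is strictly positive. A nonvanishing determinant forbids a zero eigenvalue, so every eigenvalue of each of the two symmetric matrices is strictly positive, which is precisely positive definiteness.

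I do not expect a genuine obstacle here. The one point that deserves a little care is the logical ordering: one should not try to conclude positive definiteness from the sign count alone, since the eigenvalue theorem as stated speaks only about negative eigenvalues and is silent about the kernel. It is the determinant computation $\prod_x h(x)>0$ that closes that gap by ruling out the eigenvalue $0$, and it is the symmetry observation that licenses the passage from ``all eigenvalues positive'' to ``positive definite.''
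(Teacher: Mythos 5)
Your argument is correct and follows the paper's route: the corollary is stated there as an immediate consequence of the preceding eigenvalue theorem, with no separate proof given. Your additional care in invoking the determinant theorem to exclude a zero eigenvalue (and noting the symmetry of $L$ and $g$) fills in exactly the details the paper leaves implicit, since its Morse-index setup already presupposes invertibility.
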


\paragraph{}
It also implies that an old result \cite{HearingEulerCharacteristic} that if 
$$ h(x)= \omega(x) = (-1)^{{\rm dim}(x)} = (-1)^{|x|-1} $$
for which the total energy is the Euler characteristic, that the Euler characteristic 
is the number of positive eigenvalues of $g$ minus the number of negative eigenvalues of $g$.

\paragraph{}
Having positive definite integer quadratic forms is always exciting in mathematics. 
It leads to other topics like lattice packings. The matrices $L,g$ are integral quadratic forms
which could serve as a metric in $\mathbb{R}^n$.
Some questions are asked at the end of this document. One can for example ask number
theoretical questions like how large the set of sets is for which the quadratic form
are universal in the context of the Conway-Schneeberger's 15 theorem. 

\section{The potential energy}

\paragraph{}
We now relate the total energy $E[G] = \sum_x h(x)$ of the complex with the
total potential theoretical energy is $\sum_{x,y} g(x,y)$ of $G$. 
The next result tells that these two quantities agree. The theorem requires $G$ 
to be a simplicial complex. It does not work for sets of sets. 

\begin{thm}[Energy theorem for energized complexes]
For any simplicial complex $G$ and any energy function $h$, 
the total energy is $E[G] = \sum_{x,y} g(x,y)$. 
\end{thm}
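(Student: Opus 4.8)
The plan is to expand $g$ in terms of $h$ and then collapse the double sum to a single sum over $G$. Since $S$ is the diagonal involution with $S(x,x)=\omega(x)$, conjugation gives $g(x,y)=\omega(x)\,\omega(y)\,L^{++}(x,y)=\omega(x)\,\omega(y)\,E[W^+(x)\cap W^+(y)]$. The key geometric observation is that a simplex $z$ lies in $W^+(x)\cap W^+(y)$ exactly when $z\supseteq x$ and $z\supseteq y$, i.e. when $z\supseteq x\cup y$; hence $E[W^+(x)\cap W^+(y)]=\sum_{z\in G,\,z\supseteq x\cup y} h(z)$.

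Next I would interchange the order of summation, summing over the containing simplex $z$ first. Because each factor $\omega$ depends on only one of the indices, the sum factors:
$$ \sum_{x,y} g(x,y) = \sum_{z\in G} h(z)\Bigl(\ \sum_{x\in G,\ x\subseteq z}\omega(x)\Bigr)^2 \; . $$
The whole theorem then reduces to the single claim that the inner face sum equals $1$ for every $z\in G$.

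It remains to show $\sum_{x\in G,\ x\subseteq z}\omega(x)=1$. Grouping the nonempty subsets of $z$ by cardinality and using $\omega(x)=(-1)^{|x|-1}$ turns this into the binomial identity $\sum_{j=1}^{|z|}\binom{|z|}{j}(-1)^{j-1}=1-(1-1)^{|z|}=1$; conceptually it is just the statement that the full simplex on the vertex set $z$ is contractible, so its Euler characteristic is $1$. Substituting back gives $\sum_{x,y} g(x,y)=\sum_{z\in G} h(z)=E[G]$, as claimed. In fact the single-factor version of the same manipulation recovers the dual Poincar\'e--Hopf identity $E[G]=\sum_x \omega(x)\,E[W^+(x)]$ mentioned above.

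The one place where the hypothesis ``simplicial complex'' is indispensable is precisely this inner identity: to evaluate the face sum as $1$ I need every nonempty subset of $z$ to belong to $G$, which is exactly the downward-closure axiom. For a general set of sets some faces may be missing, the alternating sum need no longer be $1$, and the clean factorization collapses --- this is why the theorem, unlike the determinant and eigenvalue results, genuinely fails outside the simplicial-complex setting. Hence this step, though short, is the real content; once the binomial identity is in hand the conclusion is immediate.
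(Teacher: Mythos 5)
Your proof is correct, and it takes a genuinely different route from the paper. The paper reduces the theorem to the row-sum identity $\omega(x)\,g(x,x)=\sum_y g(x,y)$ and then leans on the McKean--Singer statement $E[G]={\rm str}(g)$ from the following section (itself proved there by an energy-transport argument that pushes $h(x)$ down through faces to the vertices). You instead expand $g(x,y)=\omega(x)\omega(y)\sum_{z\supseteq x\cup y}h(z)$, interchange the order of summation, and factor the double sum into $\sum_z h(z)\bigl(\sum_{x\subseteq z}\omega(x)\bigr)^2$, reducing everything to the fact that a full simplex has Euler characteristic $1$. This is more elementary and self-contained: it needs no induction, no auxiliary spectral result, and it isolates exactly where downward closure is used --- the inner alternating sum over faces of $z$ must run over \emph{all} nonempty subsets of $z$ to telescope to $1$, which correctly explains why the theorem fails for general sets of sets while the determinant and eigenvalue results do not. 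As you note, the single-factor version of the same interchange gives $\sum_x\omega(x)E[W^+(x)]=E[G]$, i.e. ${\rm str}(g)=E[G]$, so your method in fact yields the paper's McKean--Singer theorem as a byproduct rather than as a prerequisite. The one thing the paper's route buys is the intermediate interpretation of $\omega(x)g(x,x)$ as a curvature/potential at $x$ (a Gauss--Bonnet reading), which your global Fubini argument bypasses.
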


\paragraph{}
The key is to show that $\omega(x) g(x,x) = \sum_y g(x,y)$ for every $x$
and using the spectral energy result in the next section. But this is
$$ \omega(x) g(x,x) = \sum_y \omega(x) \omega(y) E[ W^+(x) \cap W^+(y)]$$
which is equivalent to 
$$ E[W^+(x) \cap W^+(x)]  = \sum_y \omega(y) E[ W^+(x) \cap W^+(y)] \;  $$
which is the statement ``total energy = spectral energy" in the case $G=W^+(y)$
as the left hand side is the total energy of $W^+(x)$ and $E[ W^+(x) \cap W^+(y)]$
s the diagonal entry $g(y,y)$ if $W^+(x)$ is the total geometry. 

\section{The spectral energy}

\paragraph{}
There is a third energy, the spectral energy which is the sum of the eigenvalues of $S g$. 
Note that in general the eigenvalues of $S g$ are complex because $S g$ is not symmetric. 
But also this spectral energy agrees with the energy. We interpret it as a McKean-Singer
statement which classically is ${\rm str}(e^{-t H}) = \chi(G)$ where $H=(d+d^*)^2$ is the
Hodge Laplacian. 

\begin{thm}[Mc Kean Singer]
$E[G] = {\rm tr}(S g) = {\rm str}(g)$. 
\end{thm}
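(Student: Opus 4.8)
The statement is a discrete McKean-Singer identity, and the second equality ${\rm tr}(Sg)={\rm str}(g)$ is nothing but the definition of the super trace, so the real content is $E[G]={\rm str}(g)$. The plan is to reduce the super trace of $g$ to the dual Poincar\'e-Hopf quantity $\sum_x \omega(x) E[W^+(x)]$ and then evaluate the latter by a Fubini rearrangement. First I would use that $S$ is diagonal with entries $\pm 1$, so $S^2=I$, together with cyclicity of the trace. Writing $g=SL^{++}S$ gives ${\rm str}(g)={\rm tr}(Sg)={\rm tr}(S\cdot SL^{++}S)={\rm tr}(L^{++}S)={\rm str}(L^{++})$. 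In other words the super trace does not see the conjugation by $S$, and everything is governed by the diagonal of $L^{++}$.

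Next I would read off that diagonal. Since $W^+(x)\cap W^+(x)=W^+(x)$, we have $L^{++}(x,x)=E[W^+(x)]$, hence ${\rm str}(L^{++})=\sum_x \omega(x) E[W^+(x)]$. This is exactly the ``total energy = spectral energy'' expression flagged earlier as a Poincar\'e-Hopf identity, so the theorem is equivalent to $E[G]=\sum_x \omega(x) E[W^+(x)]$.

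The core of the argument is then a change of order of summation. Expanding $E[W^+(x)]=\sum_{z\supseteq x} h(z)$ and swapping the two sums gives
$$ \sum_x \omega(x) E[W^+(x)] = \sum_{z\in G} h(z) \sum_{x\subseteq z} \omega(x) \; . $$
The inner sum runs over all $x\in G$ contained in $z$, that is, over $W^-(z)$. Here I would invoke the simplicial complex axiom: since $z\in G$ and $G$ is closed under taking non-empty subsets, $W^-(z)$ is the full (contractible) simplex on the vertex set of $z$, so $\sum_{x\subseteq z}\omega(x)=\chi(W^-(z))=1$; concretely this is the binomial identity $\sum_{j=1}^{|z|}\binom{|z|}{j}(-1)^{j-1}=1$, equivalently $(1-1)^{|z|}=0$. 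Substituting back collapses the outer sum to $\sum_z h(z)=E[G]$, which finishes the proof.

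The step I expect to carry all the weight is the evaluation of the inner sum $\sum_{x\subseteq z}\omega(x)=1$, and this is precisely where the hypothesis that $G$ is a simplicial complex (rather than an arbitrary set of sets) is indispensable: if some proper subsets of $z$ are missing from $G$, the alternating count of the present subfaces need not equal $1$, and the telescoping fails. This matches the warning accompanying the potential-energy theorem that the energy identity does not survive for general sets of sets, and it is the only nontrivial obstacle; the remainder is linear algebra with $S^2=I$ and the cyclicity of the trace.
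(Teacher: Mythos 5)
Your proof is correct, and it takes a genuinely different route from the paper. The paper argues dynamically: it picks for each simplex $x$ a codimension-one face $v(x)$ and transfers the energy $h(x)$ from $x$ down to $v(x)$, claiming each such transfer preserves both the total energy and the super trace, so that after all energy has migrated to the vertices the identity becomes obvious. You instead compute ${\rm str}(g)$ in closed form: cyclicity plus $S^2=I$ reduces it to ${\rm str}(L^{++})=\sum_x\omega(x)E[W^+(x)]$, and a Fubini swap turns this into $\sum_z h(z)\sum_{x\subseteq z}\omega(x)$, where the inner sum is $\chi(W^-(z))=1$ by the binomial identity. The two arguments rest on the same combinatorial fact --- in the paper's version, the claim that a single transfer from $x$ to $v(x)\subset x$ leaves the super trace unchanged amounts to $\sum_{y\subseteq x}\omega(y)=\sum_{y\subseteq v(x)}\omega(y)$, i.e.\ again that every full simplex has Euler characteristic $1$ --- but the paper leaves that verification implicit, whereas your version makes it the explicit and only nontrivial step. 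Your route is more self-contained and also pinpoints precisely why the statement fails for general sets of sets (missing subfaces break the alternating count), something the paper's sketch does not surface; what the paper's deformation picture buys in exchange is the Poincar\'e--Hopf/gradient-flow interpretation it wants to emphasize elsewhere.
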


\begin{proof}
Every simplex $x$ has an energy $h(x)$. The function ${\rm dim}(x)$ is a locally
injective function on the Barycentric graph in which $G$ are the vertices and where two
sets $x,y$ in $G$ are connected, if one is contained in the other. For every simplex $x$, 
let $v(x)$ be a choice of the ${\rm dim}(x)-1$-dimensional simplices 
in $x$. Subtracting the energy from $x$ and
adding it to $v(x)$ does not change the total energy nor does it change the super 
trace. After moving all the energy down to the vertices, the statement is obvious as
$E[ W^+(x) ]$ is now $(-1)^{\rm dim}(x) h(x)$. 
\end{proof} 

\paragraph{}
The fact that the energy is the super trace of $g$ is a discrete 
analogue of the super trace formula $\chi(G) = {\rm str}(e^{-H})$ for the 
Hodge Laplacian $H=(d+d^*)^2$ and Euler characteristic. This is an important step in the proof
as the diagonal entries of $Sg(x,x) = \omega(x) g(x,x)$ can be interpreted as the total 
potential energy $\sum_{y \in G} V_x(y)$ with $V_x(y) = \omega(x) \omega(y) g(x,y)$. 
The proof actually will see this then as a curvature and the theorem as a Gauss-Bonnet
statement for the total energy functional. 

\section{Spectral symmetry}

\paragraph{}
Finally, in the constant energy case $h==1$, there is more symmetry. This has been mentioned
in \cite{CountingMatrix} already. We want to say here more about the proof which is inductive
but still a bit technical even-so we use duality to half the difficulty and use a continuous
deformation argument. 

\begin{thm}[Spectral symmetry]
If the energy function is constant $1$ then $L$ and $g$ are
positive definite. They are inverse to each other and iso-spectral. 
This result holds for arbitrary finite sets of sets $G$. 
\end{thm}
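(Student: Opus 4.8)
The plan is to collapse the whole statement onto a single Gram-type factorization, from which positive definiteness, the inverse relation and isospectrality all read off; the bonus is that the argument never uses any closure property of $G$, so it applies to an arbitrary set of sets. First I would observe that two of the three assertions are already available: since the constant energy $h\equiv 1$ takes values in $\{-1,1\}$, the inverse theorem gives $g=L^{-1}$, and since $h>0$ everywhere, the positive-value corollary gives that $L$ and $g$ are positive definite. Thus the only genuinely new content is the isospectrality of $L$ and $g$.

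To establish it, fix an ordering $x_1,\dots,x_n$ of $G$ that refines the inclusion order (smaller sets first); such a linear extension exists for any finite collection of sets. Introduce the ``core'' incidence matrix $A$ with $A(x_i,x_j)=1$ if $x_j\subseteq x_i$ and $A(x_i,x_j)=0$ otherwise. Because $x_j\subseteq x_i$ forces $j\le i$, the matrix $A$ is lower triangular with $1$'s on the diagonal, so $\det A=1$ and $A$ is invertible over $\mathbb{Z}$. The decisive computation, which I would carry out next, is that in the constant-energy case the two homoclinic matrices are precisely the two Gram matrices of $A$,
\[
  L=L^{--}=AA^{T}, \qquad L^{++}=A^{T}A,
\]
which follow at once from $L^{--}(x,y)=\sum_u [u\subseteq x][u\subseteq y]$ and $L^{++}(x,y)=\sum_u[x\subseteq u][y\subseteq u]$ together with the observation that the ``star'' incidence matrix equals $A^{T}$.

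From here everything falls out. The Gram form $L=AA^{T}$ with $A$ invertible is symmetric positive definite and has $\det L=(\det A)^2=1$, recovering the earlier facts in this case. For the spectrum, the identity $A^{T}A=A^{-1}(AA^{T})A$ exhibits $L^{++}=A^{-1}LA$, so $L$ and $L^{++}$ are conjugate and hence isospectral; and since $S$ is a diagonal sign matrix with $S=S^{-1}$, the matrix $g=SL^{++}S$ is a similarity transform of $L^{++}$ and so isospectral to it. Chaining the two conjugations, $L$, $L^{++}$ and $g$ share one spectrum. Because only the existence of a linear extension of the inclusion order was used, the conclusion holds verbatim for an arbitrary finite set of sets; combining it with $g=L^{-1}$ also yields the a priori striking corollary that the spectrum of $L$ is invariant under $\lambda\mapsto 1/\lambda$.

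The main obstacle is simply spotting the factorization: recognizing that $L^{--}$ and $L^{++}$ are $AA^{T}$ and $A^{T}A$ for one and the same unit-triangular $A$ is the whole game, after which the inductive continuous-deformation argument suggested in the text can be bypassed. The one place needing care is the ordering convention when $G$ is only a set of sets: one must confirm that triangularity of $A$ survives even though $x\cap y$ and $x\cup y$ need not belong to $G$, which it does because triangularity depends solely on the inclusion partial order and not on any closure property.
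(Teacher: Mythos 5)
Your argument is correct and takes a genuinely different route from the paper's. The paper proves isospectrality by a two-parameter deformation: it introduces a throttle $T$ on the column of a newly added cell and a parameter $H$ for its energy, tracks the coefficients of the characteristic polynomials of $L^{--}(T,H)$ and $L^{++}(T,H)$ (which are multilinear in $T,H$), uses the Boolean dual $\hat{G}$ to interchange the roles of the two matrices, and argues by induction on the number of cells that the palindromic property of the coefficient list is preserved; that section is technical, partly carried by a worked example, and in fact breaks off before the final step. Your observation that the inclusion-incidence matrix $A(x,y)=[y\subseteq x]$ is unit triangular in any linear extension of the inclusion order and satisfies $L^{--}=AA^{T}$ and $L^{++}=A^{T}A$ replaces all of this: $BC$ and $CB$ are always isospectral (here concretely $A^{T}A=A^{-1}(AA^{T})A$), $g=SL^{++}S$ is similar to $L^{++}$ since $S=S^{-1}$, and $v^{T}AA^{T}v=\|A^{T}v\|^{2}>0$ gives positive definiteness together with $\det L=(\det A)^{2}=1$ for free. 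This is shorter, complete, and genuinely valid for every finite set of sets, whereas the paper's deformation argument is not brought to a close.

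One caveat concerns the part you delegate. The relation $g=L^{-1}$, which you import from the earlier inverse theorem, is the one assertion that does not survive the passage from simplicial complexes to arbitrary sets of sets: the proof of that theorem (step (c)) needs the interval $\{u:\ z\subseteq u\subseteq x\}$ to lie entirely in $G$, i.e.\ the simplicial-complex axiom. For instance, for $G=\{\{1,2,3\},\{1,2\},\{2,3\},\{1\},\{3\}\}$ with $h\equiv 1$ the $(x_{1},x_{1})$ entry of $L^{--}SL^{++}S$ equals $5-2-2+1+1=3\neq 1$, so $g\neq L^{-1}$ there; consequently the closing corollary you draw (spectrum of $L$ invariant under $\lambda\mapsto 1/\lambda$) is justified only for simplicial complexes. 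Positive definiteness and isospectrality do hold for all sets of sets exactly as you argue; the inverse clause of the theorem should be read as restricted to complexes, a restriction the paper's own statement glosses over.
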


\paragraph{}
It helps to notice that the statement is true even
if $G$ is an arbitrary set of sets and not only true for simplicial complexes.
For sets of sets, there is a duality between stable and unstable parts. 
The axiom of simplicial complexes introduces an asymmetry in
that we require the structure to be invariant under inclusions and not the inclusion
of complements. Now, in the more general frame-work taking a structure $G$ allows to look 
at the dual structure $\hat{G}$. The matrices $L^{++},L^{--}$ for $G$ become then the matrices 
$L^{--},L^{++}$ for the Boolean dual $\hat{G}$. Everything which only concerns 
determinants or spectra is true in general for sets of sets. The spectral symmetry is that 
$g$ and $L$ have the same coefficients in their characteristic polynomial. 

\paragraph{}
As mentioned in \cite{CountingMatrix}, the proof is a deformation argument. But rather than using
a deformation in which the coefficients of the characteristic polynomial are quadratic functions
in $t$ (leading to an ``artillery picture"), we use here a deformation in which the 
coefficients change in a linear manner if one parameter is changed. 

\paragraph{}
The basic idea is to use induction and change the energy of one of the sets $x$ from $h(x)=0$ to 
$h(x)=1$. In the case if $x$ is not contained in any other set, the deformation of $L^{--}$ is easier to 
describe than the deformation of $L^{++}$. For $L^{--}$ only one column changes, while for $L^{++}$
all entries $L(x,y)$ with $y \subset x$ change. If we go to the dual picture $\hat{G}$,
which is the set of complement sets of $G$, 
the matrices $L^{--}$ and $L^{++}$ interchange and the analysis for $L^{--}$ for $\hat{G}$ 
which is analogue to the analysis for $L^{++}$ for $G$ is the same. 

\paragraph{}
We split the deformation into two parts. In a first part, the energies of the energies
``outgoing" from $x$ are throttled. We start with the situation $t=0$, where by induction 
assumption, the coefficients of the characteristic polynomials of $L^{--}$ and $L^{++}$ are 
both palindromic.  This deformation does not preserve the palindromic property of $L^{++}(t)$ 
at $t=1$ but does preserve the palindromic property of $L^{--}(t)$ for $t=1$.

\paragraph{}
Since $L,g$ are both symmetric, they are both diagonalizable using orthogonal matrices.
It follows that $L$ and $g$ are conjugated by orthogonal matrices. By computing eigenbasis
this ``scattering matrix" $Q \in SO(n)$ satisfying $g=Q^{-1} L Q = L^{-1}$. 
This matrix $Q$ is defined uniquely up to permutation and signature change coordinate changes.
If the order with which the set of sets $G$ is build-up is given we still have the choice of
two directions in each eigenvector. There is then a natural ordering of the eigenvalues
$\lambda_k$ of both $L$ and $g$. If $U L U^T = {\rm Diag}(\lambda_1, \cdots, \lambda_n)$ 
and $V g V^T = {\rm Diag}(\lambda_1, \cdots, \lambda_n)$, where the $\lambda_i$ are paired
with the sets $x_i$. This pairing also works if there should be multiple eigenvalues. 
Then $O = U^{-1} V$ conjugates $L$ and $g$.  

\paragraph{}
Example:  \\
1) For the````komma structure"  $G=\{ \{1\},\{1,2\} \}$, which is not a simplicial complex, `
$L=L^{--} = \left[ \begin{array}{cc} 1 & 1 \\ 1 & 2 \\ \end{array} \right]$.
$L^{++}   = \left[ \begin{array}{cc} 2 & 1 \\ 1 & 1 \\ \end{array} \right]$.
The matrix $O=\left[ \begin{array}{cc} 0 & 1 \\ 1 & 0 \\ \end{array} \right]$
conjugates $O L^{--} = L^{++} O$.  
2) For $G=\{ \{1, 2, 3\}, \{1, 2\}, \{1\} \}$, we have
$L^{--} = \left[\begin{array}{ccc} 1 & 1 & 1 \\ 1 & 2 & 2 \\ 1 & 2 & 3 \\ \end{array} \right]$
$L^{++} = \left[\begin{array}{ccc} 3 & 2 & 1 \\ 2 & 2 & 1 \\ 1 & 1 & 1 \\ \end{array} \right]$
$O=\left[ \begin{array}{ccc} 0 & 0 & 1 \\ 0 & 1 & 0 \\ 1 & 0 & 0 \\ \end{array} \right]$.

\paragraph{}
What happens is that the palindromic property in the case $n$ becomes an anti-palindromic
property for $n+1$ and vice versa. The artillery function
$$ A_k(t) = p_k(t)*p_{k+1}(t) - p_{n+2-k}(t)^2  - (-1)^n (p_{k+1}(t) p_{n+2-k}(t) - p_k(t)*p_{n+2-k}(t)) $$
is a quadratic function in $t$. The goal to show that if it is zero for $t=0$
it is zero for $t=1$ is a bit difficult even so we know that the coefficients are sums of minors. 

\begin{figure}[!htpb]
\scalebox{0.5}{\includegraphics{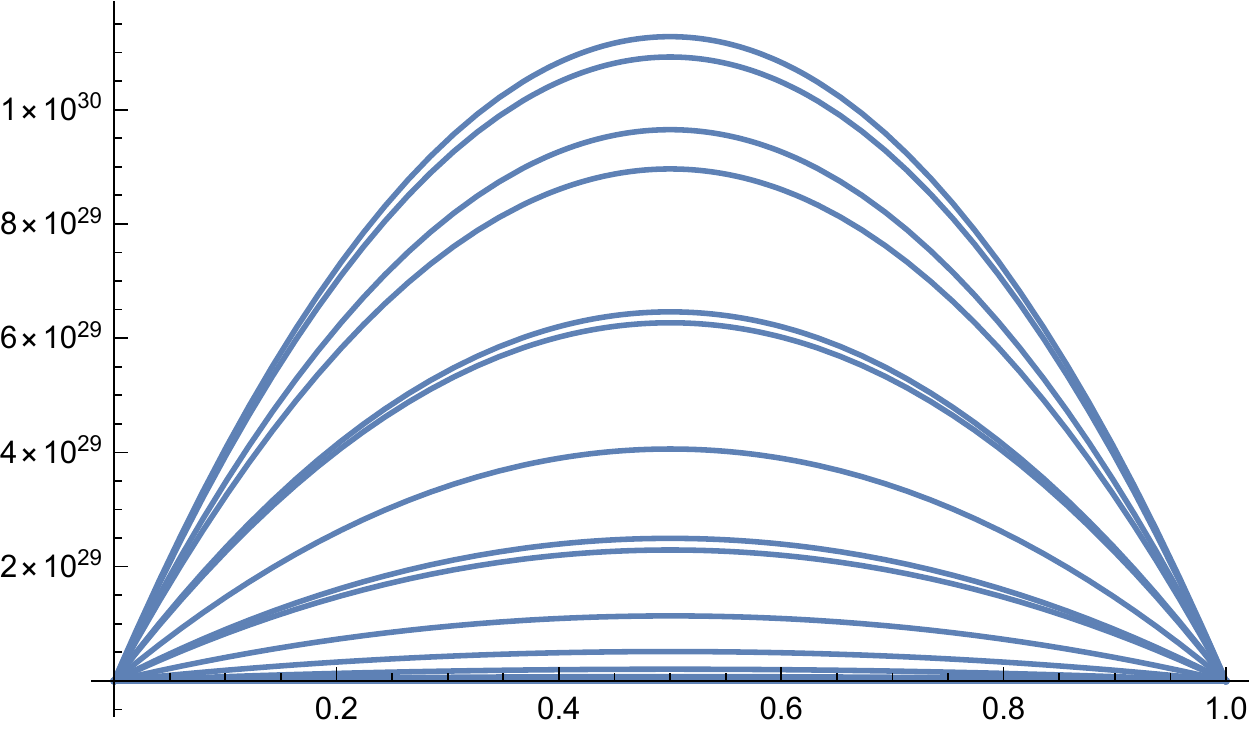}}
\label{artillery}
\caption{
The artillery functions $A_k(t)$ is a quadratic function in $t$. The isospectral property
in the induction step from $n$ to $n+1$ cells means that $A_k(t)=0$ for $t=1$. This means
that the characteristic polynomial remains palindromic.
}
\end{figure}

\paragraph{}
We know already that $L$ and $g$ are positive definite and that $L$ and $g$ 
are inverse of each other. In order to show that $L$ and $g$ are isospectral
we have to show that if $\lambda_k$ is an eigenvalue, then $1/\lambda$ is an 
eigenvalue. This means that the characteristic polynomials 
${\rm det}(L-\lambda)$ and ${\rm det}(g-\lambda)$ agree. Alternatively, this
means that the coefficients 
$$ p(x) = {\rm det}(g-x) = p_0(-x)^n + \cdots +p_k(-x)^{n-k} + \dots + p_n  $$ 
of the characteristic polynomial satisfies the palindromic property 
$$  p_k = \sum_{|P|=k} {\rm det}(g_P) = (-1)^n \sum_{|P|=n-k} {\rm det}(g_P)  = (-1)^n p_{n-k} \;  $$
where ${\rm det}(g_P)$ is a minor, the determinant of a pattern. 
(Proof: $p(\lambda^{-1} = {\rm det}(g-\lambda^{-1}) = {\rm prod}(\lambda_k-\lambda^{-1})$ 
is equal to ${\rm det}(\lambda_k^{-1}-\lambda^{-1}) = (-\lambda^n \prod_k \lambda_k)  {\rm prod} \lambda_k - \lambda 
= (-\lambda^n) {\rm det}(g) {\rm det}(g-\lambda) = (-1)^n \lambda^n p(\lambda)$.)

\paragraph{}
To prove the result, we look at the characteristic polynomials of $L^{++}$ and $L^{--}$ together
and show that they remain palindromic after adding a new cell. 
For the proof we deform the matrices $L^{++}$ and $L^{--}$ with two parameters. One parameter
parameter $T$ scales the column of the cell. The second parameter $H$ is the energy of the cell $x$> 
If the set of sets $G$ is ordered in such a way that the 
last element is not contained in any other set, the parameter $T$ can be 
interpreted as a throttle to release energy from the last element to the others. 
$$ L_{T,R} = \left[ \begin{array}{ccccccc}
    L(1,1) & L(1,2) &     . & . & . & L(1,n)   &  T L(1,n+1) \\
    L(2,1) & L(2,2) &     . & . & . & .        &  T L(2,n+1) \\ 
      .    & .      &     . & . & . & .        &  T L(3,n+1) \\
      .    & .      &     . & . & . & .        & .         \\
      .    & .      &     . & . & . & .        & .         \\
    L(n,1) & .      &     . & . & . & L(n,n)   &  T L(n,n+1) \\ 
    L(n+1,1) & L(n+1,2) &\dots  & \dots & \dots  & \dots & T L(n+1,n+1)   \\
            \end{array} \right]  \;  .  $$

\paragraph{}
In a dual way, we can throttle the energy coming to the last element. As the
last set is not contained in any other this is equivalent to parametrize the 
energy $h(n)=H$ with $H \in [0,1]$. The two situations and deformations are 
analog because of duality. If we go from $G$ to $\hat{G}$, then this
changes the subset ordering to supset ordering and changes the $L^{++}$ to $L^{--}$. 

\paragraph{}
In order to go from a situation with $n$ sets to a situation with $n+1$ sets, we 
first turn on the parameters $T$ and $H$ from $0$ to $1$. 
Each part only preserves the palindromic property of one of the characteristic polynomials
as well as the difference between suitably shifted sequences. \\

\paragraph{}
The coefficients $p_k(T,H)) = q_k(H,T)$ of the characteristic polynomials
of $L^{++}(T,H)$ and $L^{--}(T,H)$ are all multi-linear functions in $H$ and $T$. 
The difference $p_k(T,H)-q_k(T,H)$ is symmetric in $T,H$ and it remains to show
that $T p_k(T,1/T)$ is palindromic for all times. This is a statement which holds
for any energy change of any cell. The $H=1/T$ of the outgoing energy compensates with 
the energy $T$ of the in incoming energies.  

\paragraph{}
The choice of $H=1/T$ is motivated by the fact that with $H=1/T$ the deformed 
matrices remain in $SL(n,\mathbb{Z})$. (We currently believe that this would lead to a
more general result besides the case $h(x)=1$ in which every node has in and out-going
energies which multiply to $1$.)
We get then a quadratic function in $T$, but we only need to show that the first
and second derivatives are palindromic at $T=1$.
Since shooting the artillery leads
for $T=0$ to a palindromic situation and the palindromic property holds along the way, 
we also have a palindromic situation at $T=1$. Now, taking the derivative with respect to $T$
is by the Laplace expansion of the minors a sum of smaller dimensional situations in which 
cells $y_k$ have their energy modified allowing induction on the number $n$ of cells. 

\section{Illustrating the deformation proof}

\paragraph{}
To illustrate the proof, we work with 
$$  G=\{\{1\},\{2\},\{3\},\{4\},\{1,2\},\{2,3\},\{2,4\},\{3,4\} \} \; . $$
We have
$$ L^{--} = \left[ \begin{array}{cccccccc}
                   1 & 0 & 0 & 0 & 1 & 0 & 0 & 0 \\
                   0 & 1 & 0 & 0 & 1 & 1 & 1 & 0 \\
                   0 & 0 & 1 & 0 & 0 & 1 & 0 & 1 \\
                   0 & 0 & 0 & 1 & 0 & 0 & 1 & 1 \\
                   1 & 1 & 0 & 0 & 3 & 1 & 1 & 0 \\
                   0 & 1 & 1 & 0 & 1 & 3 & 1 & 1 \\
                   0 & 1 & 0 & 1 & 1 & 1 & 3 & 1 \\
                   0 & 0 & 1 & 1 & 0 & 1 & 1 & 3 \\
        \end{array} \right]  \; , $$
$$ L^{++} = \left[ \begin{array}{cccccccc}
                   2 & 1 & 0 & 0 & 1 & 0 & 0 & 0 \\
                   1 & 4 & 1 & 1 & 1 & 1 & 1 & 0 \\
                   0 & 1 & 3 & 1 & 0 & 1 & 0 & 1 \\
                   0 & 1 & 1 & 3 & 0 & 0 & 1 & 1 \\
                   1 & 1 & 0 & 0 & 1 & 0 & 0 & 0 \\
                   0 & 1 & 1 & 0 & 0 & 1 & 0 & 0 \\
                   0 & 1 & 0 & 1 & 0 & 0 & 1 & 0 \\
                   0 & 0 & 1 & 1 & 0 & 0 & 0 & 1 \\
                  \end{array} \right]   \; . $$
Both matrices have the palindromic characteristic polynomial
$$ p_L(\lambda) = {\rm det}(L-\lambda) = \lambda^8-16 \lambda^7+95 \lambda^6-268 \lambda^5+380 \lambda^4-268 \lambda^3+95 \lambda^2-16 \lambda+1 \; . $$
The coefficient list of $p_L(-\lambda)$ is $p=(1, 6, 95, 268, 380, 268, 95, 16, 1)$. We have taken $-\lambda$
in order not having to bother with negative signs and palindromic or anti-palindromic situations. 

\paragraph{}
Adding an other cell $\{2,3,4\}$ gives the complex
$$ G=\{\{1\},\{2\},\{3\},\{4\},\{1,2\},\{2,3\},\{2,4\},\{3,4\},\{2,3,4\}\} $$ 
leading to the matrices
$$ L^{--} = \left[ \begin{array}{ccccccccc}
                   1 & 0 & 0 & 0 & 1 & 0 & 0 & 0 & 0 \\
                   0 & 1 & 0 & 0 & 1 & 1 & 1 & 0 & 1 \\
                   0 & 0 & 1 & 0 & 0 & 1 & 0 & 1 & 1 \\
                   0 & 0 & 0 & 1 & 0 & 0 & 1 & 1 & 1 \\
                   1 & 1 & 0 & 0 & 3 & 1 & 1 & 0 & 1 \\
                   0 & 1 & 1 & 0 & 1 & 3 & 1 & 1 & 3 \\
                   0 & 1 & 0 & 1 & 1 & 1 & 3 & 1 & 3 \\
                   0 & 0 & 1 & 1 & 0 & 1 & 1 & 3 & 3 \\
                   0 & 1 & 1 & 1 & 1 & 3 & 3 & 3 & 7 \\
                  \end{array} \right] $$
and 
$$ L^{++} = \left[ \begin{array}{ccccccccc}
                   2 & 1 & 0 & 0 & 1 & 0 & 0 & 0 & 0 \\
                   1 & 5 & 2 & 2 & 1 & 2 & 2 & 1 & 1 \\
                   0 & 2 & 4 & 2 & 0 & 2 & 1 & 2 & 1 \\
                   0 & 2 & 2 & 4 & 0 & 1 & 2 & 2 & 1 \\
                   1 & 1 & 0 & 0 & 1 & 0 & 0 & 0 & 0 \\
                   0 & 2 & 2 & 1 & 0 & 2 & 1 & 1 & 1 \\
                   0 & 2 & 1 & 2 & 0 & 1 & 2 & 1 & 1 \\
                   0 & 1 & 2 & 2 & 0 & 1 & 1 & 2 & 1 \\
                   0 & 1 & 1 & 1 & 0 & 1 & 1 & 1 & 1 \\
                  \end{array} \right] \; . $$
Both matrices have palindromic characteristic polynomials
$$ p_L(\lambda) = {\rm det}(L-\lambda) = -\lambda^9+23 \lambda^8-176 \lambda^7+628 \lambda^6-1167 \lambda^5+
1167 \lambda^4-628 \lambda^3+176 \lambda^2-23 \lambda+1 \; . $$ 
The coefficient list of $p_L(-\lambda)$ is the palindrome 
$q=(1,23,176,628,1167,1167,628,176,23,1)$.

\paragraph{}
In our case, as $\{2,3,4\}$ is not contained in any other set, the matrix $L^{--}$
has not affected the first $9$ rows and columns of $L^{--}$ before the
addition. But the matrix $L^{++}$ has changed every entry $L^{+}(x,y)$ 
if $y \subset x$. The energy of these sets $y$ changed because they are
contained in a common cell of $x$ and $y$. 

\paragraph{}
Now lets change the energy of the cell $x$ and denote it with $H=h(x)$. 
The other energy entries remain $1$. Now we have 
$$ L^{--} =  \left[ \begin{array}{ccccccccc}
          1 & 0 & 0 & 0 & 1 & 0 & 0 & 0 & 0 \\
          0 & 1 & 0 & 0 & 1 & 1 & 1 & 0 & 1 \\
          0 & 0 & 1 & 0 & 0 & 1 & 0 & 1 & 1 \\
          0 & 0 & 0 & 1 & 0 & 0 & 1 & 1 & 1 \\
          1 & 1 & 0 & 0 & 3 & 1 & 1 & 0 & 1 \\
          0 & 1 & 1 & 0 & 1 & 3 & 1 & 1 & 3 \\
          0 & 1 & 0 & 1 & 1 & 1 & 3 & 1 & 3 \\
          0 & 0 & 1 & 1 & 0 & 1 & 1 & 3 & 3 \\
          0 & 1 & 1 & 1 & 1 & 3 & 3 & 3 & H+6 \\
         \end{array} \right] \;  $$
and 
$$ L^{++} =  \left[ \begin{array}{ccccccccc}
                   2 & 1 & 0 & 0 & 1 & 0 & 0 & 0 & 0 \\
                   1 & H+4 & H+1 & H+1 & 1 & H+1 & H+1 & H & H \\
                   0 & H+1 & H+3 & H+1 & 0 & H+1 & H & H+1 & H \\
                   0 & H+1 & H+1 & H+3 & 0 & H & H+1 & H+1 & H \\
                   1 & 1 & 0 & 0 & 1 & 0 & 0 & 0 & 0 \\
                   0 & H+1 & H+1 & H & 0 & H+1 & H & H & H \\
                   0 & H+1 & H & H+1 & 0 & H & H+1 & H & H \\
                   0 & H & H+1 & H+1 & 0 & H & H & H+1 & H \\
                   0 & H & H & H & 0 & H & H & H & H \\
                  \end{array} \right] \; . $$

\paragraph{}
The coefficient list of the characteristic polynomial of $L^{--}$ as a
function of $H$ is 
$p(H)=(H, 7 + 16 H, 81 + 95 H, 360 + 268 H, 787 + 380 H, 899 + 268 H, 
 533 + 95 H, 160 + 16 H, 22 + H, 1)$ and for $L^{++}$ it is
$q(H) = (H, 1 + 22 H, 16 + 160 H, 95 + 533 H, 268 + 899 H, 380 + 787 H, 
 268 + 360 H, 95 + 81 H, 16 + 7 H, 1)$. While $q(0),q(1)=p(1)$ are
palindromic, the characteristic polynomial of $L^{--}(H=0)$ is not but since
$H$ only appears in the corner, the derivative
$p'(H)$ is constant $q(0)$ and palindromic. While the induction step is harder
to handle in the case $L^{--}$, the $L^++$ picture has a better deformation
property in the sense that $q(0)$ and $q(1)$ are both palindromic. The reason
that $p(0)$ is not palindromic is that in the case $H=0$, we still have contributions
of the new cell even so its energy has been turned off. 

\paragraph{}
We therefore have to turn off also the flux of the energy to the new cell $x$ 
We do this with an other parameter $T$. There is now a two parameter family of deformations
$$ L^{--} = \left[
                  \begin{array}{ccccccccc}
                   1 & 0 & 0 & 0 & 1 & 0 & 0 & 0 & 0 \\
                   0 & 1 & 0 & 0 & 1 & 1 & 1 & 0 & T \\
                   0 & 0 & 1 & 0 & 0 & 1 & 0 & 1 & T \\
                   0 & 0 & 0 & 1 & 0 & 0 & 1 & 1 & T \\
                   1 & 1 & 0 & 0 & 3 & 1 & 1 & 0 & T \\
                   0 & 1 & 1 & 0 & 1 & 3 & 1 & 1 & 3 T \\
                   0 & 1 & 0 & 1 & 1 & 1 & 3 & 1 & 3 T \\
                   0 & 0 & 1 & 1 & 0 & 1 & 1 & 3 & 3 T \\
                   0 & 1 & 1 & 1 & 1 & 3 & 3 & 3 & (H+6) T \\
                  \end{array}
                  \right]  \; . $$
It has the characteristic polynomial coefficients 
\begin{tiny}
\begin{eqnarray*}
 p_{T,H} &=& (HT, 1 + 6T + 16HT, 16 + 65T + 95HT, 95 + 265T + 268HT, 268 + 519T + 380HT,  \\
         & & 380 + 519T + 268HT, 268 + 265T + 95HT, 95 + 65T + 16HT, 16 + 6T + HT, 1) \; . 
\end{eqnarray*}
\end{tiny}
When $T=0,H=0$, we have the palindromic property from the case without the cell $x$ but shifted due
to an other eigenvalue $0$. 
$$  T=0,H=0,    p_{00} = (0, 1, 16, 95, 268, 380, 268, 95, 16, 1)   $$
Now, $T=1,H=0$ is not palindromic but $T=0,H=1$ gives the palindromic
characteristic polynomial coefficients:
$$  T=0,H=1,    p_{01} = (0, 1, 16, 95, 268, 380, 268, 95, 16, 1)  \; .  $$
Finally, when $T=1,H=1$, when we allow energy to go into the new cell $x$, 
we have the new situation
$$  T=1,H=1,    p_{11} = (1, 23, 176, 628, 1167, 1167, 628, 176, 23, 1)   \; .  $$

\paragraph{}
To see that the transition from $T=0,H=1$ to $T=1,H=1$ preserves the palindromic property we look at
$$ L^{++} = \left[
                  \begin{array}{ccccccccc}
                   2 & 1 & 0 & 0 & 1 & 0 & 0 & 0 & 0 \\
                   1 & H+4 & H+1 & H+1 & 1 & H+1 & H+1 & H & H T \\
                   0 & H+1 & H+3 & H+1 & 0 & H+1 & H & H+1 & H T \\
                   0 & H+1 & H+1 & H+3 & 0 & H & H+1 & H+1 & H T \\
                   1 & 1 & 0 & 0 & 1 & 0 & 0 & 0 & 0 \\
                   0 & H+1 & H+1 & H & 0 & H+1 & H & H & H T \\
                   0 & H+1 & H & H+1 & 0 & H & H+1 & H & H T \\
                   0 & H & H+1 & H+1 & 0 & H & H & H+1 & H T \\
                   0 & H & H & H & 0 & H & H & H & H T \\
                  \end{array}
                  \right] $$
which has the coefficients
\begin{tiny}
\begin{eqnarray*}
 q_{T,H} &=& (HT, 1 + 6H + 16*HT, 16 + 65H + 95HT, 95 + 265H + 268HT,  \\
         & & 268 + 519H + 380HT, 380 + 519H + 268HT, 268 + 265H + 95HT, 95 + 65H + 16HT, 16 + 6H + HT, 1) 
\end{eqnarray*}
\end{tiny}
which satisfies $q_{T,H}=p_{H,T}$. The difference $q_{T,H}-p_{T,H}$ is palindromic
\begin{tiny}
\begin{eqnarray*}
 q_{T,H}-p_{T,H} &=&  (0, -6H + 6T, -65H + 65T, -265H + 265T, -519H + 519T, -519H + 519T,  \\
                 & & -265H + 265T, -65H + 65T, -6H + 6T, 0) \; .
\end{eqnarray*}
\end{tiny}
We have 
\begin{tiny}
\begin{eqnarray*}
 q_{T,T} &=& p_{T,T} = (T^2,16 T^2+6 T+1,95 T^2+65 T+16,268 T^2+265 T+95,380 T^2+519 \\
         & & T+268,268 T^2+519 T+380,95 T^2+265 T+268,16 T^2+65 T+95,T^2+6 T+16,1 )  \; . 
\end{eqnarray*}
\end{tiny}
The palindromic property holds for $T=0$ to $T=1$. If we look at 
\begin{tiny}
\begin{eqnarray*}
 T q_{T,1/T} &=& T p_{T,1/T} = 
   (T,6 T^2+17 T,65 T^2+111 T,265 T^2+363 T,519 T^2+648 T,519 T^2+648 T,265 \\
   & & T^2+363 T,65 T^2+111 T,6 T^2+17 T,T ) \; .
\end{eqnarray*}
\end{tiny}
The palindromic property for $q_{1,1}$ follows from the statement that 
$T q_{T,1/T}$ is palindromic for all $T$. The statement for $p_{1,1}$ then 
follows from duality. Let us look at this next: 

\paragraph{}
When going from $G$ to the dual 
$$ \hat{G}=\{\{1\},\{1,2\},\{1,3\},\{1,4\},\{3,4\},\{1,2,3\},\{1,2,4\},\{1,3,4\},\{2,3,4\}\} $$
we reverse $L^{++}$ and $L^{--}$. 
If we would have added the largest cell, we 
get just a reverse of $L^{--}$ and $L^{++}$. Without resorting $\hat{G}$, we have
$$ L^{--} = \left[
                  \begin{array}{ccccccccc}
                   2 & 1 & 0 & 0 & 1 & 0 & 0 & 0 & 0 \\
                   1 & H+4 & H+1 & H+1 & 1 & H+1 & H+1 & H & H T \\
                   0 & H+1 & H+3 & H+1 & 0 & H+1 & H & H+1 & H T \\
                   0 & H+1 & H+1 & H+3 & 0 & H & H+1 & H+1 & H T \\
                   1 & 1 & 0 & 0 & 1 & 0 & 0 & 0 & 0 \\
                   0 & H+1 & H+1 & H & 0 & H+1 & H & H & H T \\
                   0 & H+1 & H & H+1 & 0 & H & H+1 & H & H T \\
                   0 & H & H+1 & H+1 & 0 & H & H & H+1 & H T \\
                   0 & H & H & H & 0 & H & H & H & H T \\
                  \end{array}
                  \right] $$
and 
$$ L^{++} =  \left[
                  \begin{array}{ccccccccc}
                   1 & 0 & 0 & 0 & 1 & 0 & 0 & 0 & 0 \\
                   0 & 1 & 0 & 0 & 1 & 1 & 1 & 0 & T \\
                   0 & 0 & 1 & 0 & 0 & 1 & 0 & 1 & T \\
                   0 & 0 & 0 & 1 & 0 & 0 & 1 & 1 & T \\
                   1 & 1 & 0 & 0 & 3 & 1 & 1 & 0 & T \\
                   0 & 1 & 1 & 0 & 1 & 3 & 1 & 1 & 3 T \\
                   0 & 1 & 0 & 1 & 1 & 1 & 3 & 1 & 3 T \\
                   0 & 0 & 1 & 1 & 0 & 1 & 1 & 3 & 3 T \\
                   0 & 1 & 1 & 1 & 1 & 3 & 3 & 3 & (H+6) T \\
                  \end{array}
                  \right] $$
So, if we make the induction assumption that for all sets of sets $G$ with $n$
elements, the palindromic property for $L^{--},L^{++}$ implies the palindromic property for 
$L^{--}$ in the case $n+1$, then we also have that in the dual picture
the palindromic property for $L^{++}$. We can therefore focus on one case. 

\paragraph{}
So, the proof reduced to show that $T p(T,1/T)=T q(1/T,T)$ is a palindromic sequence
of quadratic functions in $T$ for all $T$. This is shown if $T p(T,1/T)$

\section{Zeta function} 

\paragraph{}
As $L,g$ are now positive definite quadratic
forms which are isospectral one can define a spectral zeta function
$$   \zeta(s) = \sum_{k=1}^n \lambda_k^{-s}  \;. $$
One can also look at the Ihara zeta function $\zeta_I(s) = 1/\det(1-s L)$.

\begin{coro}
Both the spectral zeta function as well as the Ihara zeta function 
satisfy in the constant energy case $h=1$ the functional equation $\zeta(a+ib)=\zeta(-a+ib)$
respectively $\zeta_I(-a+ib) = \zeta_I(a+ib)$. 
\end{coro}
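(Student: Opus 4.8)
The plan is to reduce both functional equations to the single structural fact supplied by the Spectral symmetry theorem: for $h=1$ the matrices $L$ and $g=L^{-1}$ are isospectral and positive definite, and by the Determinant theorem ${\rm det}(L)=\prod_x h(x)=1$. Positive definiteness makes every eigenvalue $\lambda_k$ of $L$ a positive real, so $\lambda_k^{-s}=e^{-s\log\lambda_k}$ is unambiguous on the principal branch and the reality relation $\overline{\zeta(\bar s)}=\zeta(s)$ is available; isospectrality of $L$ and $L^{-1}$ says that the eigenvalue multiset is invariant under the inversion $\lambda\mapsto 1/\lambda$. This inversion symmetry of the spectrum is the only input both parts use.

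For the spectral zeta function I would reindex the defining sum by this inversion. Writing $\zeta(-s)=\sum_k\lambda_k^{s}$ and replacing each $\lambda_k$ by $1/\lambda_k$ (which merely permutes the spectrum) gives $\zeta(-s)=\sum_k\lambda_k^{-s}=\zeta(s)$, so $\zeta$ is even. Composing with the reality relation then converts the point reflection $s\mapsto-s$ into the reflection across the imaginary axis: with $s=a+ib$ one has $-a+ib=-\bar s$ and hence $\zeta(-a+ib)=\zeta(-\bar s)=\overline{\zeta(-s)}=\overline{\zeta(a+ib)}$, which is the asserted equation (in particular $|\zeta(-a+ib)|=|\zeta(a+ib)|$, with genuine equality on the lines where $\zeta$ is real).

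For the Ihara zeta function I would work with the reciprocal polynomial $P(s)={\rm det}(1-sL)=\prod_k(1-s\lambda_k)$. Inversion symmetry gives $P(s)=\prod_k(1-s\lambda_k^{-1})$, and pulling out $\prod_k\lambda_k^{-1}={\rm det}(L)^{-1}=1$ yields $\prod_k(s-\lambda_k)=(-1)^nP(s)$, i.e. the characteristic polynomial ${\rm det}(sI-L)=s^nP(1/s)$ is palindromic up to the sign $(-1)^n$. This is exactly the palindromic property recorded in the Spectral symmetry section, and it reads $P(1/s)=(-1)^ns^{-n}P(s)$, so for $\zeta_I=1/P$ one gets the functional equation $\zeta_I(1/s)=(-1)^ns^n\zeta_I(s)$; the reality relation $\overline{P(\bar s)}=P(s)$ together with ${\rm det}(L)=1$ then puts it into the reflected form stated in the corollary.

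The step I expect to be the main obstacle is not the spectral algebra but this last translation of the symmetry into the precise form written in the statement. The inversion of the spectrum produces a point reflection for the additive variable of $\zeta$ ($s\mapsto-s$) and an inversion $s\mapsto 1/s$ for the multiplicative variable of $\zeta_I$; turning these into the reflection $a\mapsto-a$ across the imaginary axis genuinely requires the conjugation symmetry coming from positive definiteness, and for $\zeta_I$ one must pass through the palindromic normalization (equivalently ${\rm det}(L)=1$) so that the factor $(-1)^ns^n$ is accounted for. The only analytic care needed is to fix a single branch of $\lambda^{-s}$; since all $\lambda_k>0$ the principal branch works term by term and the reindexing $\lambda_k\mapsto 1/\lambda_k$ is legitimate, so no monodromy obstruction arises.
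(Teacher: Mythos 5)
Your proposal is correct, and it is in fact substantially more complete than the proof printed in the paper. The paper's entire argument is the single sentence ``Since the eigenvalues $\lambda_k$ are real, we have $\zeta(a+ib)=\zeta(a-ib)$,'' which only supplies the reflection $b\mapsto -b$ (and even that should read $\zeta(a-ib)=\overline{\zeta(a+ib)}$); it never invokes the one ingredient that actually produces the reflection $a\mapsto -a$, namely the invariance of the spectrum under $\lambda\mapsto 1/\lambda$ coming from the isospectrality of $L$ and $g=L^{-1}$. You supply exactly that missing half: the reindexing $\lambda_k\mapsto\lambda_k^{-1}$ gives evenness $\zeta(-s)=\zeta(s)$, and composing with conjugation symmetry yields $\zeta(-a+ib)=\overline{\zeta(a+ib)}$. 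Two honest caveats you already flag and that are worth keeping: first, the identity you actually derive is $\zeta(-a+ib)=\overline{\zeta(a+ib)}$, so the corollary as literally stated holds only where $\zeta$ is real (or at the level of $|\zeta|$); the paper glosses over this. Second, for the Ihara zeta function the natural symmetry you obtain is the multiplicative one, $\zeta_I(1/s)=(-1)^n s^n\zeta_I(s)$ (using $\det(L)=1$ to normalize the palindromic characteristic polynomial), which is not literally the additive reflection $a\mapsto -a$ written in the corollary; this discrepancy is in the paper's statement, not in your argument. Net: same underlying idea as the paper intends, but your write-up is the one that actually proves it.
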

\begin{proof}
Since the eigenvalues $\lambda_k$ are real, we have $\zeta(a+ib) = \zeta(a-ib)$
and $\zeta_I(a+ib)=\zeta_I(a-ib)$. 
\end{proof} 

\paragraph{}
Of particular interest is the case when $h(x)=1$ for all $x$ as then, 
we not only have the spectral symmetry, but additional know that
the matrices are in $SL(n,\mathbb{R})$. They define positive definite quadratic
quadratic forms, so that: 

\begin{coro}
For every set of sets $G$, the matrices $L^{++}$ and $L^{--}$ define unimodular 
integral lattices defined by a positive definite quadratic form of determinant $1$.
\end{coro}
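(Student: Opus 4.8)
The plan is to treat this as a packaging corollary: a real symmetric matrix that is integer-valued, positive definite, and of determinant $1$ is by definition the Gram matrix of a positive definite unimodular integral lattice. So I would simply verify these three properties for $L^{++}$ and $L^{--}$ in the constant energy case $h \equiv 1$, drawing each from a result already proved above, and then invoke the standard correspondence between such matrices and lattices.

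First I would record symmetry and integrality straight from the definitions. Since $W^+(x) \cap W^+(y) = W^+(y) \cap W^+(x)$, and likewise for $W^-$, the entries $L^{++}(x,y) = E[W^+(x) \cap W^+(y)]$ and $L^{--}(x,y) = E[W^-(x) \cap W^-(y)]$ are symmetric in $x,y$, so both matrices define quadratic forms; and because $h \equiv 1$ is integer-valued, every entry is an integer, as noted in the excerpt. Next I would pin down the determinant. The Determinant theorem with $h(x)=1$ gives ${\rm det}(L) = {\rm det}(g) = \prod_x h(x) = 1$, so $L = L^{--}$ has determinant $1$. For $L^{++}$ I would use that $g = S L^{++} S$ with $S = S^{-1}$ a diagonal $\pm 1$ matrix, hence orthogonal; thus $L^{++} = S g S$ is orthogonally congruent to $g$, and ${\rm det}(L^{++}) = {\rm det}(S)^2 {\rm det}(g) = 1$.

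For positive definiteness I would invoke the eigenvalue theorem, which equates the number of negative eigenvalues of each of $L^{--}$, $L^{++}$, and $g$ with the number of negative values of $h$; with $h \equiv 1$ this count is $0$, and since the determinant is $1 \neq 0$ there is no zero eigenvalue, so all eigenvalues are strictly positive and both matrices are positive definite. (Equivalently, one applies the corollary ``$h>0 \Rightarrow L,g$ positive definite'' to $L=L^{--}$ and $g$, then transports positive definiteness to $L^{++} = S g S$ through the same orthogonal congruence, which preserves the spectrum.) Finally I would realize the lattice: a positive definite symmetric $A$ factors as $A = B^{T} B$ with $B \in GL(n,\mathbb{R})$, and the columns of $B$ span a lattice $\Lambda \subset \mathbb{R}^n$ with Gram matrix $A$. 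Integrality of $A$ says all inner products of lattice vectors lie in $\mathbb{Z}$, so $\Lambda$ is integral; its covolume is $\sqrt{{\rm det}(A)} = 1$, so $\Lambda$ is unimodular, equal to its own dual. Applying this to $A = L^{++}$ and $A = L^{--}$ gives the claim; note that the self-duality is already visible in the fact that the inverse $g = L^{-1}$ is again integer-valued.

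There is no deep obstacle here, since every ingredient has been established: the whole content is transporting three prior facts onto the two matrices of interest. The only points that genuinely require care are the passage between $L^{++}$ and $g$ via the orthogonal matrix $S$ (used both for the determinant and for the sign of the spectrum, since the earlier positive-definiteness corollary is phrased for $L$ and $g$), and the purely definitional check that a positive definite integral symmetric matrix of unit determinant is exactly a positive definite unimodular integral lattice.
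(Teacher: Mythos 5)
Your argument is correct and follows essentially the same route the paper takes: the corollary is stated there as pure packaging of the Determinant theorem (giving $\det = 1$ for sets of sets), the positive definiteness already asserted in the spectral symmetry theorem (equivalently your eigenvalue-count argument with $h\equiv 1$), and the integrality of the entries, combined with the standard Gram-matrix correspondence. Your explicit handling of $L^{++}=S g S$ via the signature matrix $S$ is a small but welcome detail the paper leaves implicit.
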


\section{Isospectral multigraphs}

\paragraph{}
An other consequence of the spectral symmetry concerns multigraphs as illustrated
in Figure~(\ref{multigraphs1}), Figure~(\ref{multigraphs2}),
Figure~(\ref{multigraphs3}) and Figure~(\ref{multigraphs4}). 
If $G$ is a set of sets, we can define the multi graphs $\Gamma^{--}$ and $\Gamma^{++}$
in which the sets are the nodes and where two sets $x,y$ are connected using 
exactly $L^{--}(x,y)$ or $L^{--}(x,y)$ connections from $x$ to $y$. 
As the adjacency matrices $L^{++}$ and $L^{--}$ are isospectral, the graphs are isospectral
and especially, the number of closed paths of length $k$ or length $n-k$ are the same. 

\begin{coro}
For any set of sets $G$, the multigraphs defined by the adjacency matrices 
$L^{--}$ and $L^{++}$ are isospectral.
\end{coro}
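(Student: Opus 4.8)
The plan is to deduce this directly from the Spectral Symmetry Theorem together with the conjugation relation $g = S L^{++} S$. First I would fix the constant energy case $h \equiv 1$, in which every entry $L^{--}(x,y) = |W^-(x) \cap W^-(y)|$ and $L^{++}(x,y) = |W^+(x) \cap W^+(y)|$ is a nonnegative integer, so that both matrices are genuine adjacency matrices of the multigraphs $\Gamma^{--}$ and $\Gamma^{++}$ in which $x$ and $y$ are joined by exactly that many edges. The claim that these multigraphs are isospectral is, by definition, the claim that $L^{--}$ and $L^{++}$ share the same characteristic polynomial, so the corollary is equivalent to the purely matrix-theoretic statement that $L^{--}$ and $L^{++}$ are isospectral.

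Next I would invoke the Spectral Symmetry Theorem, which already asserts, for an arbitrary finite set of sets in the case $h \equiv 1$, that $L = L^{--}$ and $g = S L^{++} S$ are isospectral. It then remains only to strip off the conjugating factor $S$. Since $S(x,y) = \delta(x,y) \omega(x)$ with $\omega(x) \in \{-1,1\}$, the matrix $S$ is a diagonal involution, $S = S^{-1}$, so $g = S L^{++} S^{-1}$ is similar to $L^{++}$. Similar matrices share their spectra, hence $g$ and $L^{++}$ are isospectral, and chaining ${\rm spec}(L^{--}) = {\rm spec}(g) = {\rm spec}(L^{++})$ yields the desired equality of characteristic polynomials and therefore the isospectrality of $\Gamma^{--}$ and $\Gamma^{++}$.

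There is no genuine obstacle in this corollary itself: all of the analytic effort lives in the Spectral Symmetry Theorem, whose inductive two-parameter deformation argument is what forces the characteristic polynomials of $L$ and $g$ to be palindromic and hence equal. Once that theorem is granted, the only thing to verify here is that conjugation by the signature matrix $S$ preserves the spectrum, which is immediate. A fully self-contained alternative would be to prove the palindromic identity $p_k = (-1)^n p_{n-k}$ for the coefficients of ${\rm det}(L^{++} - \lambda)$ directly, but this would merely re-derive the Spectral Symmetry Theorem, so the clean route is the reduction above.
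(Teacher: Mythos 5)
Your reduction is exactly the paper's route: the corollary is stated as an immediate consequence of the Spectral Symmetry Theorem, with the only additional observation being that $g = S L^{++} S = S L^{++} S^{-1}$ is similar to $L^{++}$ and hence isospectral to it, so ${\rm spec}(L^{--}) = {\rm spec}(g) = {\rm spec}(L^{++})$. Your proposal is correct and adds nothing beyond what the paper intends, correctly identifying that all the real work lives in the Spectral Symmetry Theorem.
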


\paragraph{}
The picture for the random walk on the graph $\Gamma^{--}$ is that we can hop from set to set but
take on the way from $x$ to $y$ a side step to a set in the intersection of $x$ and $y$.
This is encoded in the matrix entry $L^{--}(x,y)$ which count the total energy of the sets
contained in $x \cap y$. The dual picture is that we can jump from $x$ to $y$ while side-stepping
onto a set containing both $x$ and $y$. These two situations can be looked at also in the dual case
where $\hat{G}$ is the set of complements of sets in $G$. 
It is a primitive model in which $x,y$ are entities and the sets in $x \cap y$ or $x \cup y$ 
model common shared interactions or (in the dual case) common bridge operations. 

\paragraph{}
Various zeta functions have been defined for graphs. One can look at the sums $\sum_k \lambda_k^{-s}$
where $\lambda_k$ are the eigenvalues of a Laplacian defined on the graph. Example of ``Laplacians"
are Kirchhoff Laplacians, Hodge Laplacian, connection Laplacians. The adjacency matrix works if it
is positive definite. One can also look at the Ihara zeta function $\zeta(s) = 1/{\rm det}(1-sL)$. 

\begin{coro}
The graphs $\Gamma^{++}$ and $\Gamma^{--}$ have the same spectral and the same Ihara zeta function. 
\end{coro}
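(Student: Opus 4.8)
The plan is to reduce the statement to the isospectrality of $L^{++}$ and $L^{--}$ already established in the preceding corollary, and then to observe that both zeta functions depend on the adjacency matrix only through its spectrum. First I would record the chain of spectral identities that makes this reduction precise. In the constant energy case the Spectral symmetry theorem gives that $L=L^{--}$ and $g$ are isospectral, and since $g=SL^{++}S$ with $S=S^{-1}$ the matrices $g$ and $L^{++}$ are conjugate, hence isospectral. Composing these, $L^{--}$ and $L^{++}$ carry the same characteristic polynomial, so their eigenvalues $\lambda_1,\dots,\lambda_n$ agree as a multiset.

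For the spectral zeta function the formula $\zeta(s)=\sum_{k=1}^n \lambda_k^{-s}$ is manifestly a symmetric function of the eigenvalues. Because $\Gamma^{++}$ and $\Gamma^{--}$ have identical eigenvalue multisets, after reindexing the two sums agree term by term for every $s$ in the common domain of convergence. Here I would also invoke the positive definiteness supplied by the same theorem, so that every $\lambda_k>0$ and each power $\lambda_k^{-s}$ is well defined, making the equality literal rather than formal.

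For the Ihara zeta function I would use $\zeta_I(s)=1/\det(1-sL)$ together with the factorization $\det(1-sL)=\prod_{k=1}^n(1-s\lambda_k)=s^n p(1/s)$, where $p$ is the characteristic polynomial. Thus $\det(1-sL)$ is, up to the substitution $s\mapsto 1/s$ and the factor $s^n$, nothing but the reversed characteristic polynomial. Equal characteristic polynomials for $L^{++}$ and $L^{--}$ therefore force $\det(1-sL^{++})=\det(1-sL^{--})$ as polynomials in $s$, and the two Ihara zeta functions coincide as rational functions.

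The point to stress is that no new obstacle arises in this corollary: all the genuine difficulty has already been absorbed into the Spectral symmetry theorem, whose deformation argument establishes the palindromic and hence isospectral property. The only care required here is that isospectrality be read \emph{with multiplicities}, which it is, since we are matching characteristic polynomials and not merely eigenvalue sets. This guarantees that both the sum $\sum_k \lambda_k^{-s}$ and the product $\prod_k(1-s\lambda_k)$ are equal identically and not just on their supports, which is exactly what the two functional identities require.
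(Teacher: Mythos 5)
Your proposal is correct and follows exactly the route the paper intends: the corollary is stated without a separate proof because it reduces immediately to the isospectrality of $L^{++}$ and $L^{--}$ (via $g=SL^{++}S$ being conjugate to $L^{++}$ and the Spectral symmetry theorem), after which both $\zeta(s)=\sum_k\lambda_k^{-s}$ and $\zeta_I(s)=1/\det(1-sL)$ are functions of the characteristic polynomial alone. Your added care about multiplicities and positive definiteness is appropriate and consistent with the paper.
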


\paragraph{}
The coefficients $p_k$ of the characteristic polynomial of $L$ or $g$ have a geometric interpretation 
as the number of closed prime paths of length $k$, meaning that these are simple paths. In contrary, 
${\rm tr}(L^k)$ super counts the number of all closed paths of length $k$. The spectral symmetry result
implies that both for $\Gamma=\Gamma^{++}$ and $\Gamma=\Gamma^{--}$ we have:

\begin{coro}
The super count $p_k$ of all closed simple paths of length $k$ in $\Gamma$ either has 
the symmetry $p_k=p_{n-k}$ or $p_k=-p_{n-k}$.
\end{coro}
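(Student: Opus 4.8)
The plan is to read this corollary as the marriage of an algebraic symmetry, already in hand, with the standard combinatorial meaning of characteristic-polynomial coefficients. Both $\Gamma^{++}$ and $\Gamma^{--}$ are the multigraphs whose adjacency matrices are $L^{++}$ and $L^{--}=L$, and in the constant energy case $h=1$ the Spectral Symmetry theorem guarantees (for an arbitrary finite set of sets $G$) that these matrices are positive definite, mutually inverse, and isospectral. Hence their common characteristic polynomial $p(x)=\det(g-x)$ has a spectrum closed under $\lambda\mapsto 1/\lambda$, which is exactly the source of the palindromic identity $p_k=(-1)^n p_{n-k}$ derived from $p(\lambda^{-1})=(-1)^n\lambda^n p(\lambda)$.

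First I would record the coefficient-to-minor formula $p_k=\sum_{|P|=k}\det(A_P)$, where $A$ denotes the adjacency matrix of $\Gamma$ and $A_P$ its principal submatrix on a $k$-element vertex subset $P$; this is the usual expansion of $\det(A-x)$ into sums of principal minors, and it is the same formula $p_k=\sum_{|P|=k}\det(g_P)$ already written out in the Spectral Symmetry discussion. Next I would invoke the cycle-cover (Coates/Sachs) expansion of each minor: $\det(A_P)$ is the signed sum over linear subgraphs spanning $P$, that is, over collections of vertex-disjoint directed cycles covering $P$, each weighted by the product of its edge multiplicities and the permutation-parity sign. Summing over all $P$ of size $k$ exhibits $p_k$ as precisely the super count of the closed simple (prime) path systems of total length $k$ in $\Gamma$ that the statement refers to.

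Finally the two descriptions meet. The palindromic identity $p_k=(-1)^n p_{n-k}$ is now literally a symmetry of the super counts: it reads $p_k=p_{n-k}$ when $n$ is even and $p_k=-p_{n-k}$ when $n$ is odd, which is exactly the dichotomy claimed. The argument applies verbatim to both $\Gamma^{++}$ and $\Gamma^{--}$, since by isospectrality the two adjacency matrices share the single characteristic polynomial whose coefficients are being counted. The main obstacle I anticipate is the sign bookkeeping in the cycle-cover step: one must check that the parity coming from the number of cycles, the edge-multiplicity weights $L^{\pm\pm}(x,y)$, and the global factor $(-1)^{n-k}$ in the characteristic polynomial all align, so that the combinatorial super count agrees on the nose with the algebraic coefficient. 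The palindromicity itself requires no new work, being supplied directly by the Spectral Symmetry theorem.
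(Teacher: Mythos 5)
Your proposal is correct and follows essentially the same route as the paper: the paper likewise reads $p_k=\sum_{|P|=k}\det(g_P)$ as the count of closed simple (prime) path systems and then cites the palindromic identity $p_k=(-1)^n p_{n-k}$ from the spectral symmetry theorem to obtain the stated dichotomy according to the parity of $n$. Your extra care with the Coates/Sachs cycle-cover sign bookkeeping only fleshes out a step the paper asserts without detail.
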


\begin{figure}[!htpb]
\scalebox{0.5}{\includegraphics{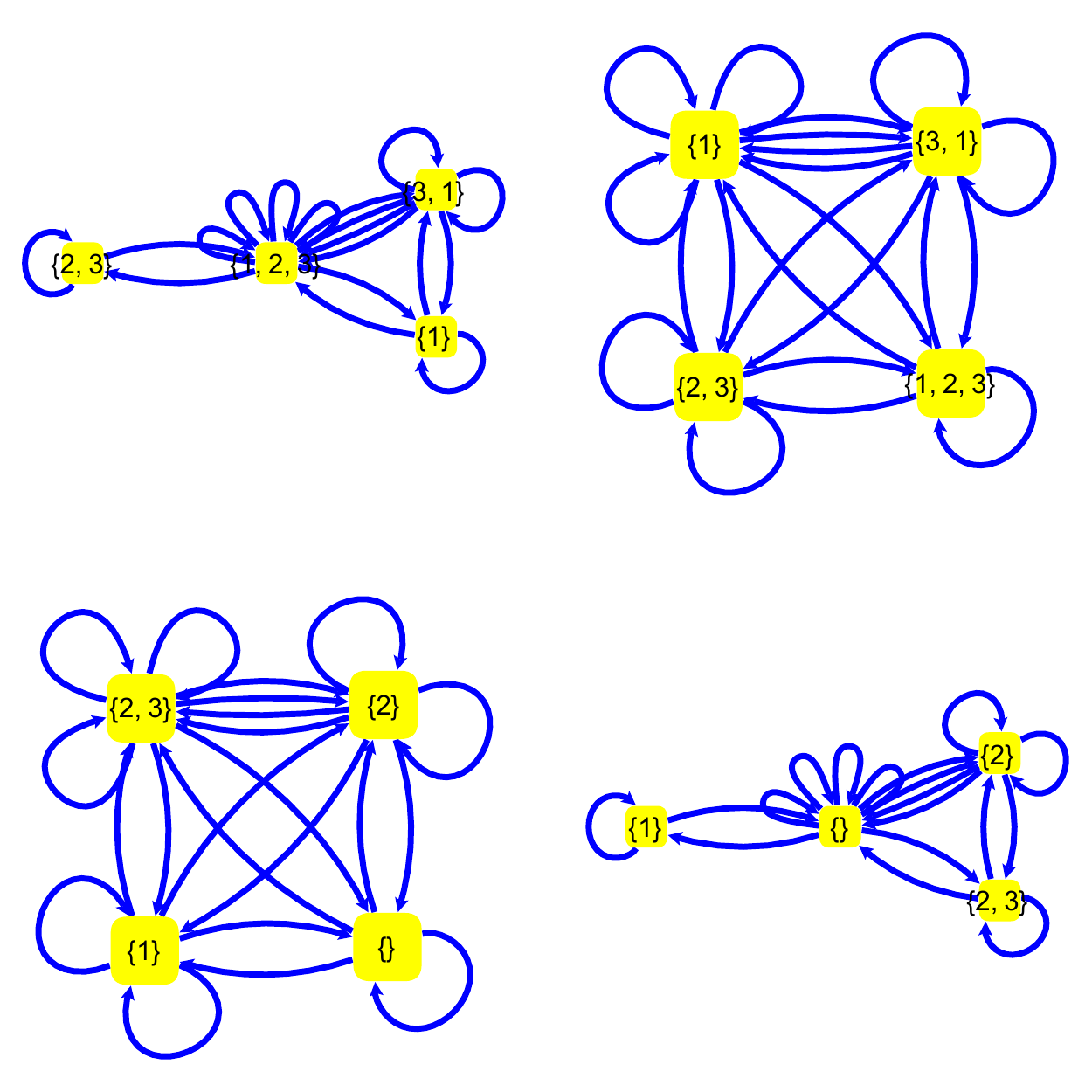}}
\label{multigraphs1}
\caption{
The graphs $\Gamma^{--}$ and $\Gamma^{++}$ for the set of sets $G=\{\{1\},\{2,3\},\{3,1\},\{1,2,3\}\}$
have the sets as nodes and $L^{--}(x,y)$ rsp $L^{++}(x,y)$ connections from $x$ to $y$.
Below we see the dual set $\hat{G}= \{ 2,3\},\{1\},\{2\},\emptyset \}$ and its corresponding
graphs $\hat{\Gamma}^{--}$ and $\hat{\Gamma}^{++}$.
}
\end{figure}

\begin{figure}[!htpb]
\scalebox{0.5}{\includegraphics{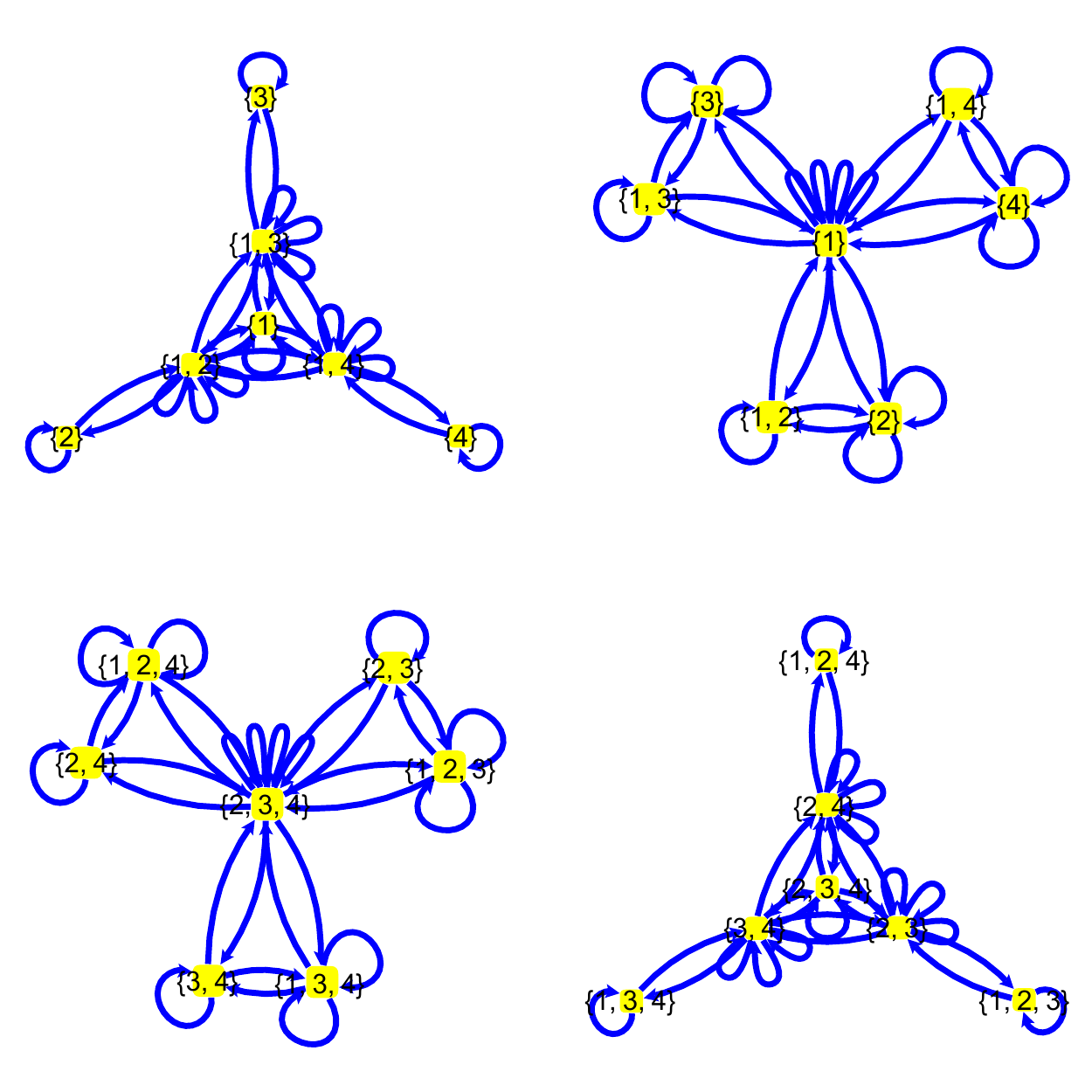}}
\label{multigraphs2}
\caption{
The graphs $\Gamma^{--}$ and $\Gamma^{++}$ for $G=\{\{1\},\{1,2\},\{1,3\},\{1,4\}\}$
and the dual structure. For any set of sets we get isospectral multi-graphs.
}
\end{figure}

\begin{figure}[!htpb]
\scalebox{0.5}{\includegraphics{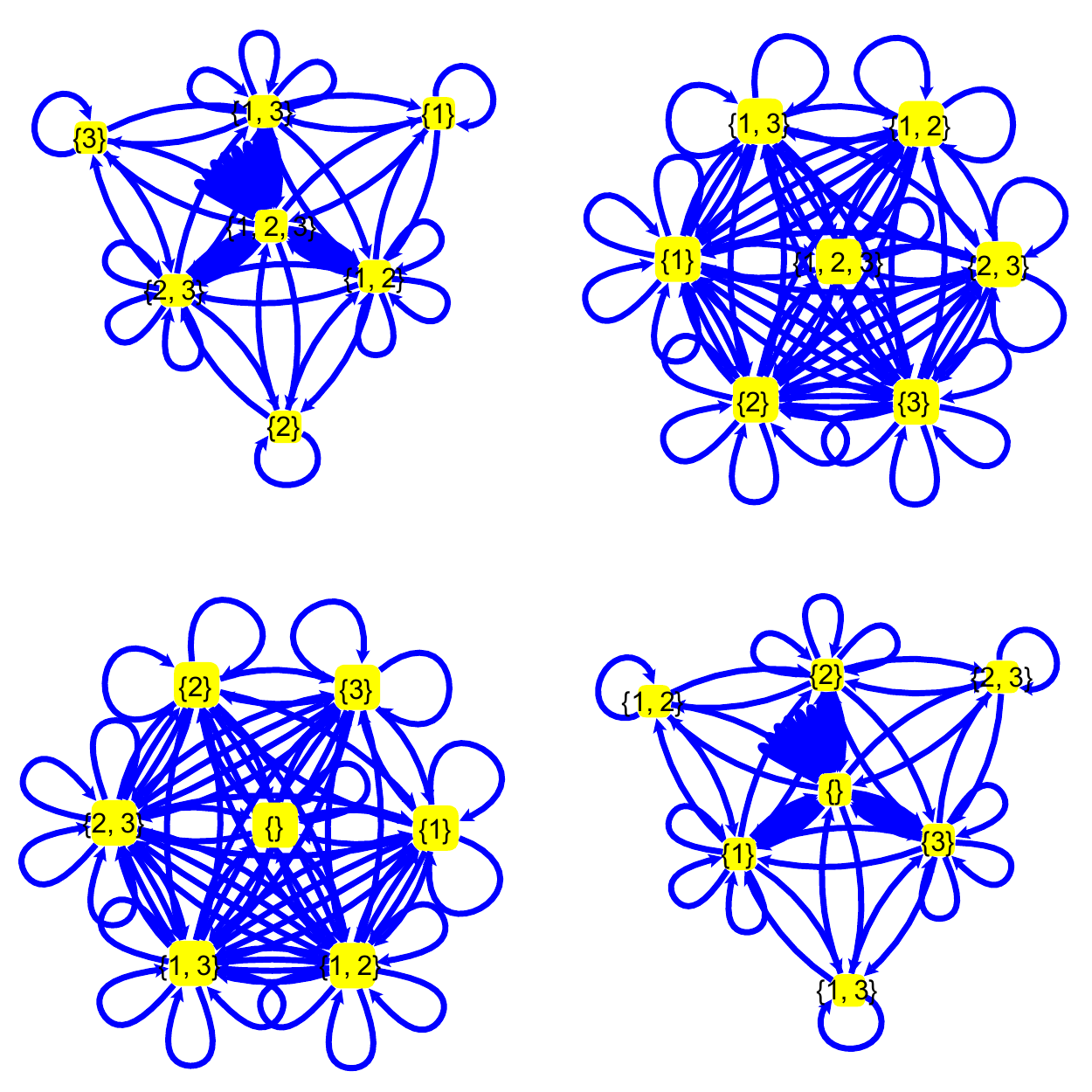}}
\label{multigraphs3}
\caption{
The graphs $\Gamma^{--}$ and $\Gamma^{++}$ for the complete complex $G$
of all non-empty subsets of $\{1,2,3\}$. 
}
\end{figure}

\begin{figure}[!htpb]
\scalebox{0.5}{\includegraphics{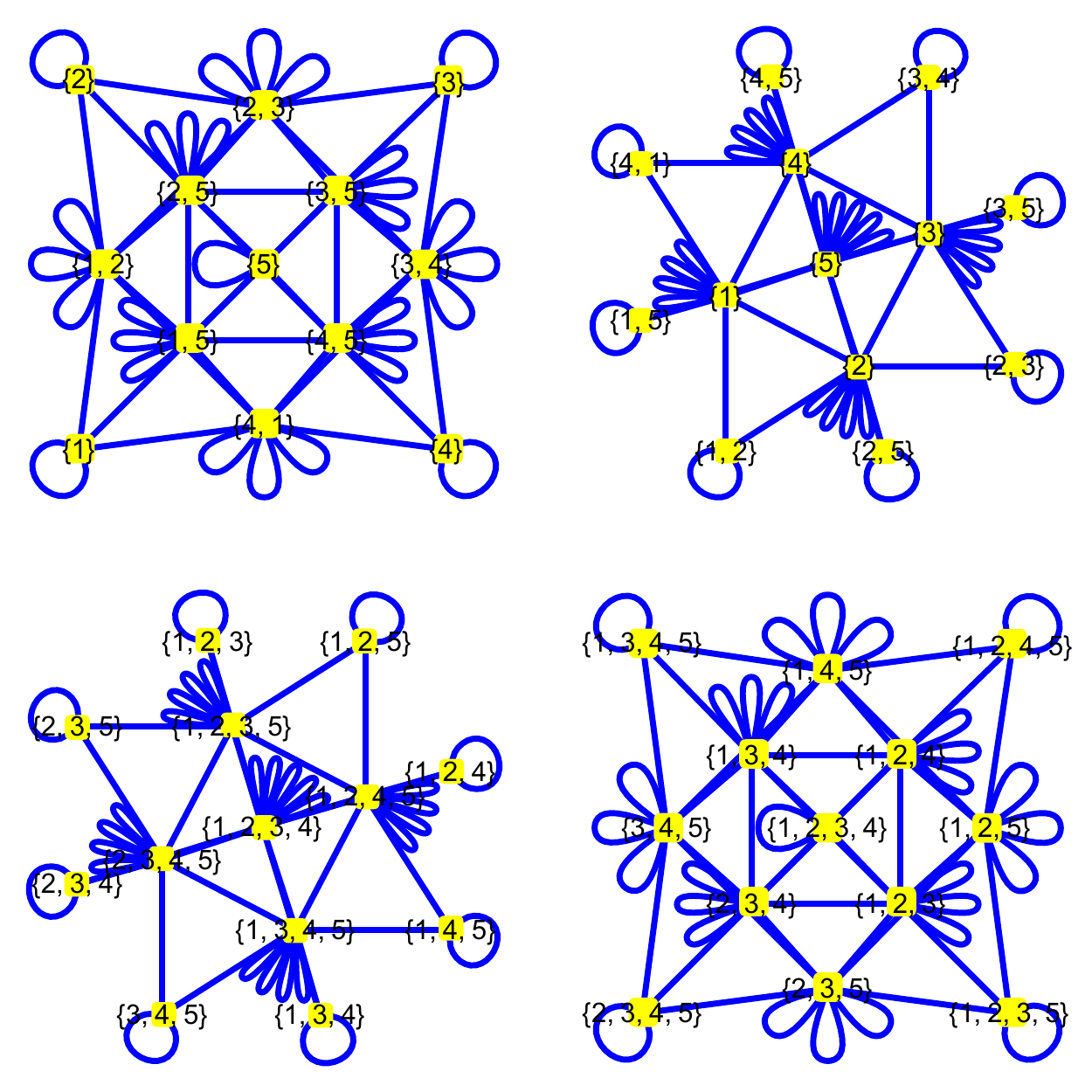}}
\label{multigraphs4}
\caption{
The graphs $\Gamma^{--}$ and $\Gamma^{++}$ for  wheel graph.
}
\end{figure}

\paragraph{}
Finally, we should mention that the construction allows to construct many periodic
and so also almost periodic or even random graphs which are isospectral. 
Just start with a sequence $\{0,1\}$ and encode this into a graph by attaching little
dangles at the points where we have a $1$ and none else. 

\begin{figure}[!htpb]
\scalebox{0.5}{\includegraphics{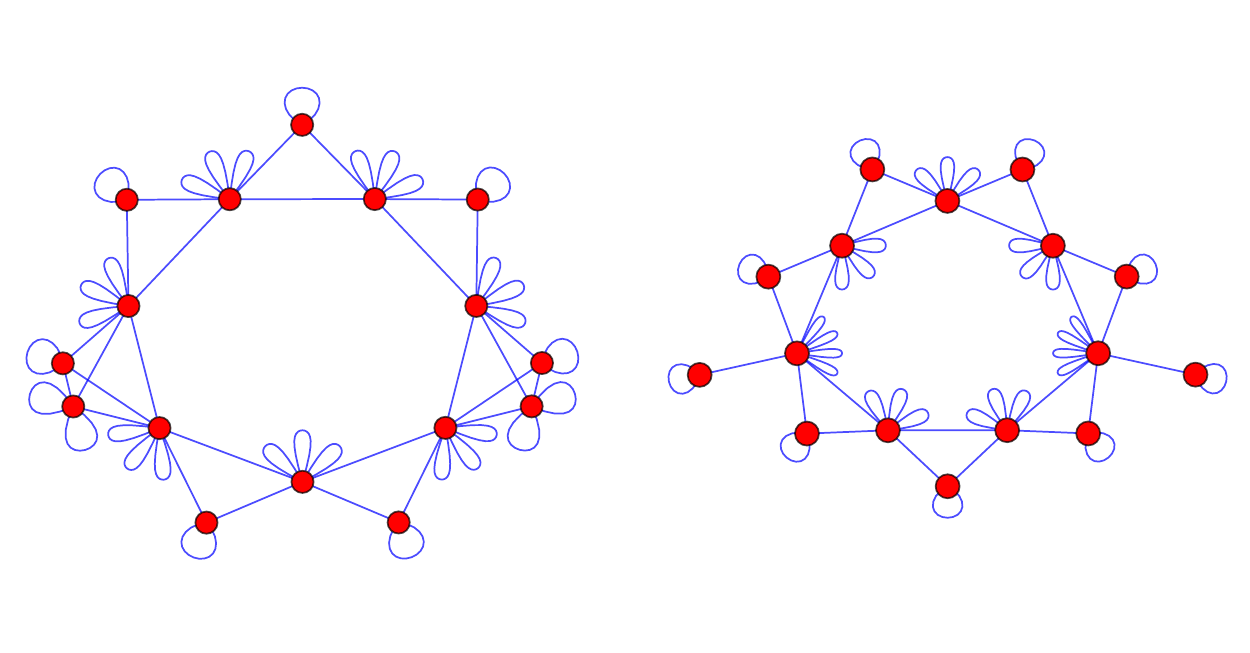}}
\label{multigraphs}
\caption{
Constructing periodic and almost periodic matrices which are isospectral. 
Here is a small periodic isospectral pair of graphs
}
\end{figure}

\begin{figure}[!htpb]
\scalebox{0.6}{\includegraphics{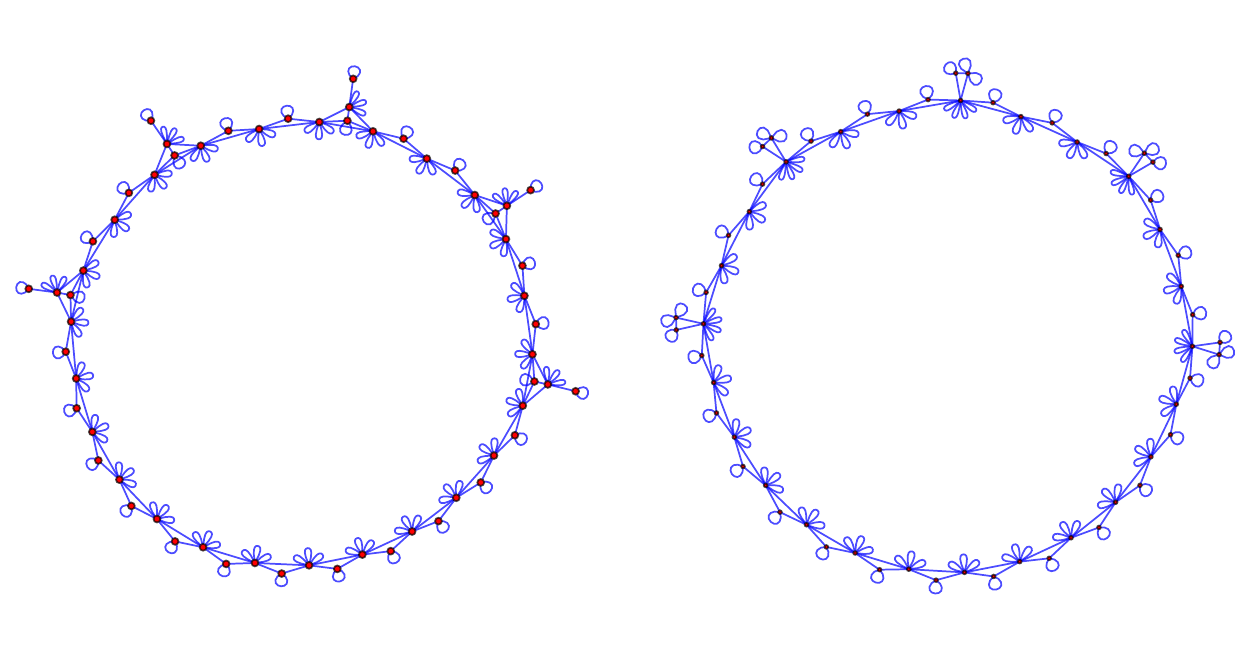}}
\label{multigraphs}
\caption{
A larger isospectral pair of multi-graphs
}
\end{figure}

\begin{figure}[!htpb]
\scalebox{0.3}{\includegraphics{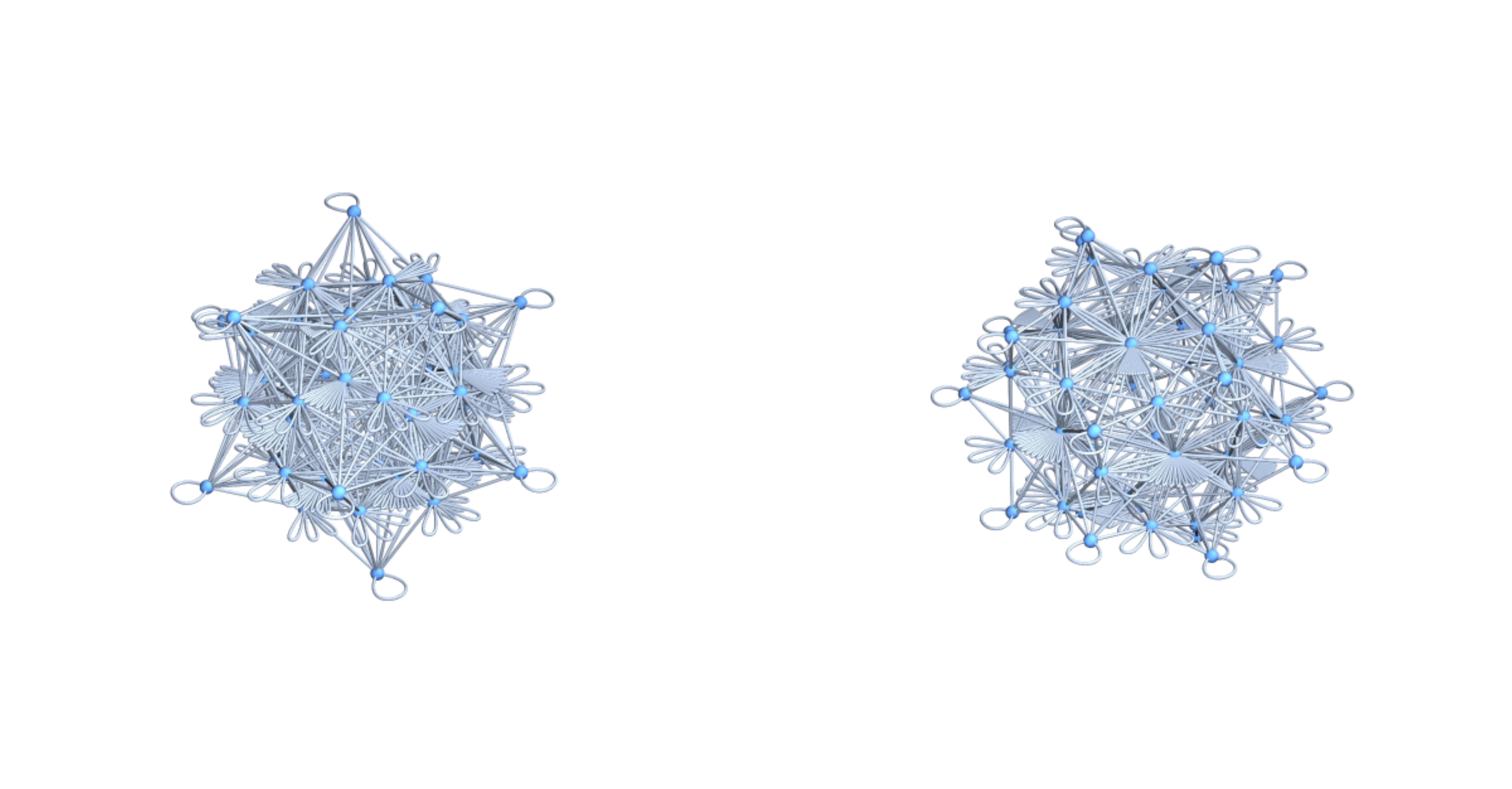}}
\label{multigraphs}
\caption{
The multigraph and dual multigraph for the Whitney
complex of the icosahedron graph. 
}
\end{figure}

\begin{figure}[!htpb]
\scalebox{0.3}{\includegraphics{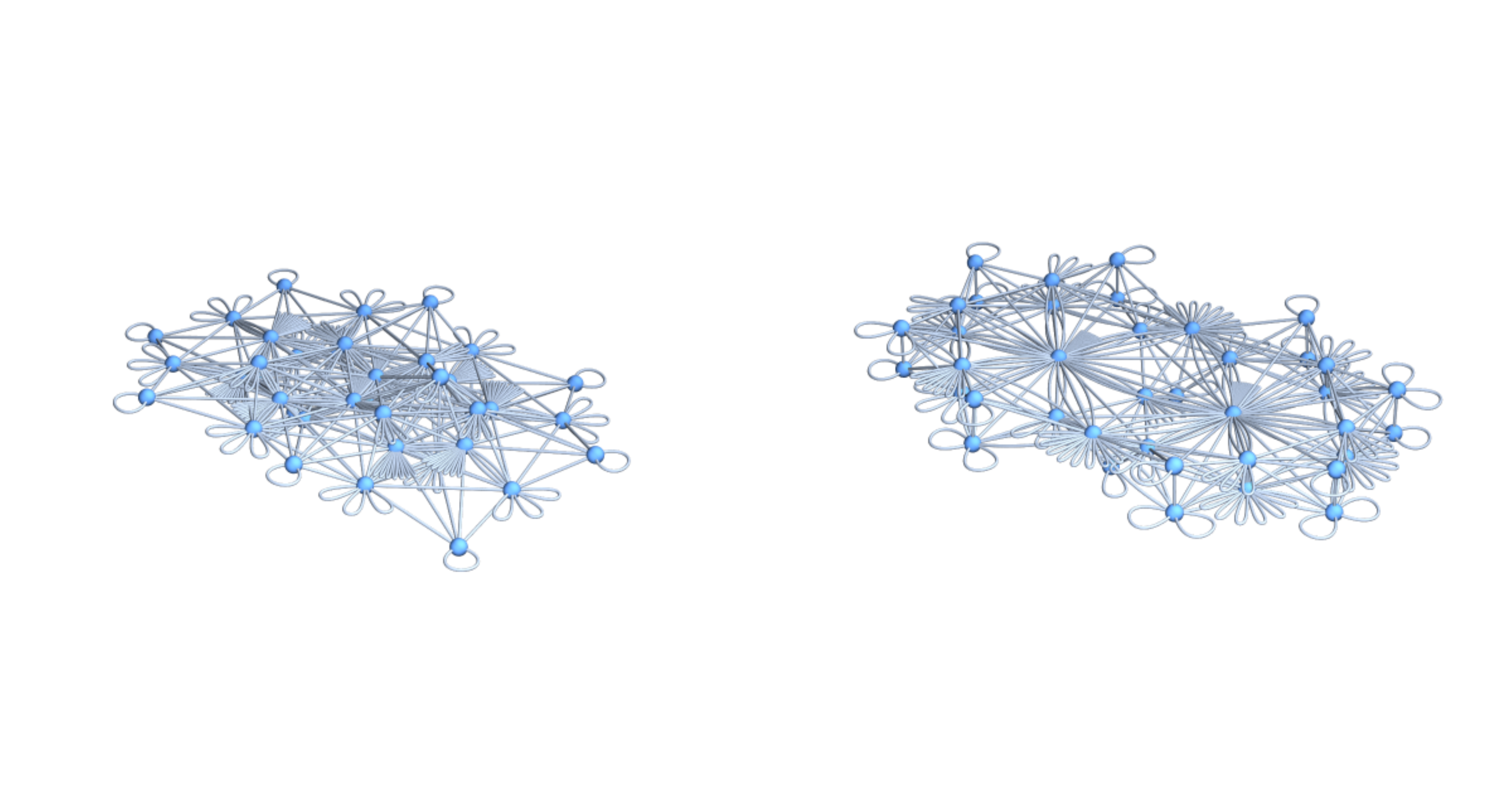}}
\label{multigraphs}
\caption{
The multigraph and dual multigraph for the Whitney
complex of a simple lattice region. We can take the 
van Hove limit and get pairs of periodic infinite matrices which are
isospectral and have continuuous spectrum. 
}
\end{figure}

\begin{figure}[!htpb]
\scalebox{0.3}{\includegraphics{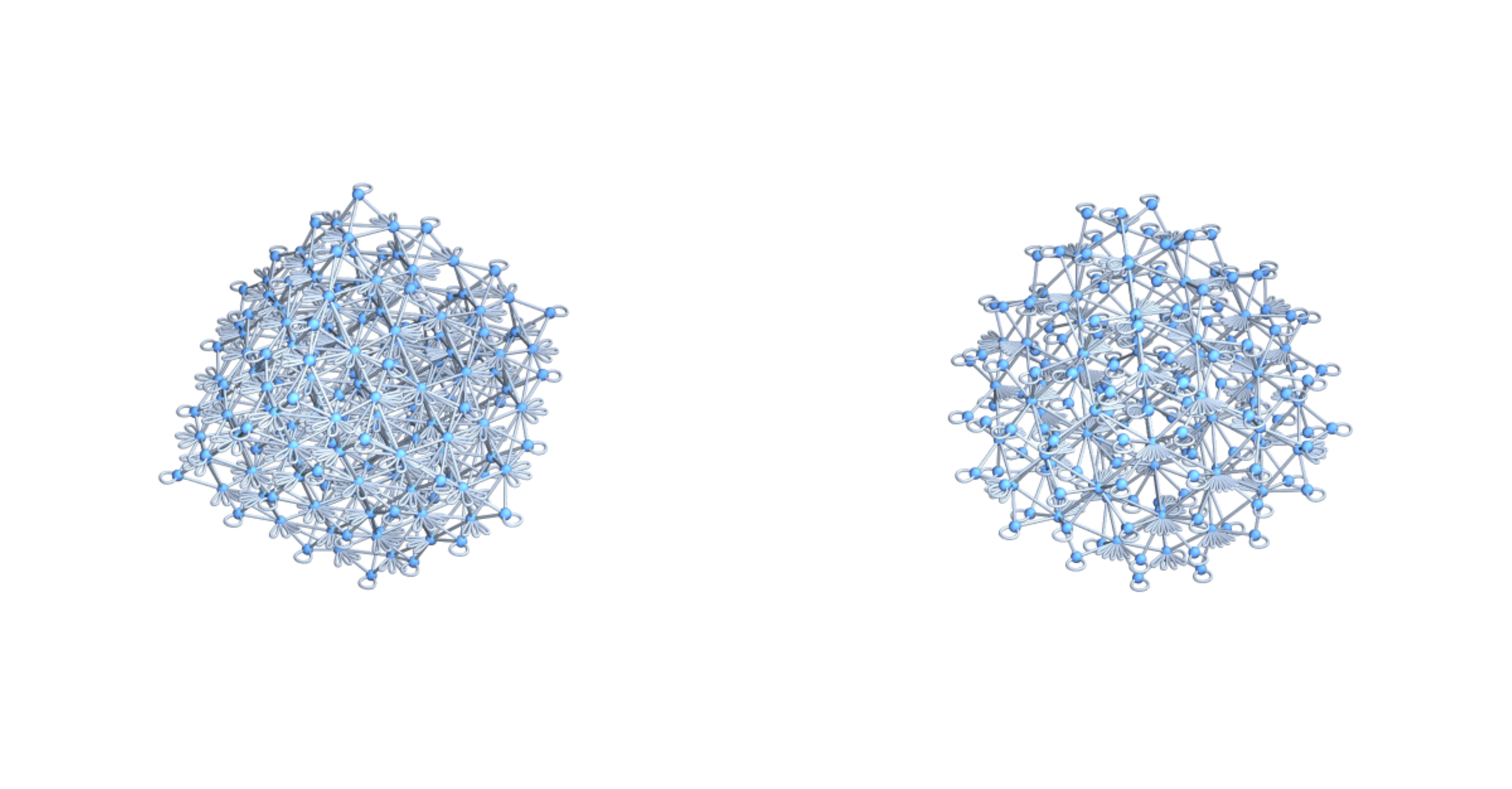}}
\label{multigraphs}
\caption{
The same in any dimension. Any simplicial complex $G$ defines
two isospectral multi-graphs $\Gamma^+,\Gamma^-$. If $G$ is a
Barycentric refinement, then $G,\Gamma^+,\Gamma^-$ are homotopic. 
}
\end{figure}

\section{Examples}

\paragraph{}
{\bf 1)} Let $G=\{ \{1\},\{2\},\{1,2\} \}$ and let $x,y,z$ be the energy values
$h(\{1\})=x,h(\{2\})=y,h(\{z\})=z$. Then 
$$ L = L^{--} = \left[ \begin{array}{ccc}
 x & 0 & x \\
 0 & y & y \\
 x & y & x+y+z \\
\end{array} \right],  
   L^{++} = \left[ \begin{array}{ccc}
 x+z & z & z \\
 z & y+z & z \\
 z & z & z \\
\end{array} \right] $$
with determinants ${\rm det}(L)={\rm det}(g) = xyz$ and 
$$ L g =  \left[ \begin{array}{ccc} 
 x^2 & 0 & 0 \\
 0 & y^2 & 0 \\
 x^2-z^2 & y^2-z^2 & z^2 \\
\end{array} \right]  \; . $$
This shows that if $x,y,z$ take values in $\{-1,1\}$, then $L,SgS$ are
inverses of each other. An other special case is if $x=y=z=a$ is constant, 
then the $L$ and $g$ have the same characteristic polynomial 
$p_L(t)=p_g(t)=a^3-5 a^2 t+5 a t^2-t^3$ and are therefore isospectral. 

\paragraph{}
{\bf 2)} For $G = \{\{1\},\{2\},\{3\},\{1,2\},\{1,3\},\{2,3\},\{1,2,3\}\}$, 
and energy values $\{a, b, c, d, x, y, z\}$, we have
$$ L=L^{--} = \left[
\begin{array}{ccccccc}
 a & 0 & 0 & a & a & 0 & a \\
 0 & b & 0 & b & 0 & b & b \\
 0 & 0 & c & 0 & c & c & c \\
 a & b & 0 & a+b+d & a & b & a+b+d \\
 a & 0 & c & a & a+c+x & c & a+c+x \\
 0 & b & c & b & c & b+c+y & b+c+y \\
 a & b & c & a+b+d & a+c+x & b+c+y & a+b+c+d+x+y+z \\
\end{array} \right] $$
and 
$$
L^{++} = \left[ \begin{array}{ccccccc}
 a+d+x+z & d+z & x+z & d+z & x+z & z & z \\
 d+z & b+d+y+z & y+z & d+z & z & y+z & z \\
 x+z & y+z & c+x+y+z & z & x+z & y+z & z \\
 d+z & d+z & z & d+z & z & z & z \\
 x+z & z & x+z & z & x+z & z & z \\
 z & y+z & y+z & z & z & y+z & z \\
 z & z & z & z & z & z & z \\
\end{array} \right] $$
which both have the determinant $a b c d x y z$. 
The sum of the matrix elements of 
$$ g = \left[ \begin{array}{ccccccc}
 a+d+x+z & d+z & x+z & -d-z & -x-z & -z & z \\
 d+z & b+d+y+z & y+z & -d-z & -z & -y-z & z \\
 x+z & y+z & c+x+y+z & -z & -x-z & -y-z & z \\
 -d-z & -d-z & -z & d+z & z & z & -z \\
 -x-z & -z & -x-z & z & x+z & z & -z \\
 -z & -y-z & -y-z & z & z & y+z & -z \\
 z & z & z & -z & -z & -z & z \\
\end{array} \right] $$
is the total energy $a+b+c+d+x+y+z$. The matrix
$$ 
\scalemath{0.8}{
L  g  = \left[ \begin{array}{ccccccc}
 a^2 & 0 & 0 & 0 & 0 & 0 & 0 \\
 0 & b^2 & 0 & 0 & 0 & 0 & 0 \\
 0 & 0 & c^2 & 0 & 0 & 0 & 0 \\
 a^2-d^2 & b^2-d^2 & 0 & d^2 & 0 & 0 & 0 \\
 a^2-x^2 & 0 & c^2-x^2 & 0 & x^2 & 0 & 0 \\
 0 & b^2-y^2 & c^2-y^2 & 0 & 0 & y^2 & 0 \\
 a^2-d^2-x^2+z^2 & b^2-d^2-y^2+z^2 & c^2-x^2-y^2+z^2 & d^2-z^2 & x^2-z^2 & y^2-z^2 & z^2 \\
\end{array} \right] 
}
$$
shows that if $x \in \{-1,1\}$, then $L$ and $S g S$ are inverse matrices.
If all energy values are equal to $a$, then both $L$ and $S g S$ have the 
characteristic polynomial 
$a^7-19 a^6 t+102 a^5 t^2-228 a^4 t^3+228 a^3 t^4-102 a^2 t^5+19 a t^6-t^7$. 

\begin{figure}[!htpb]
\scalebox{0.2}{\includegraphics{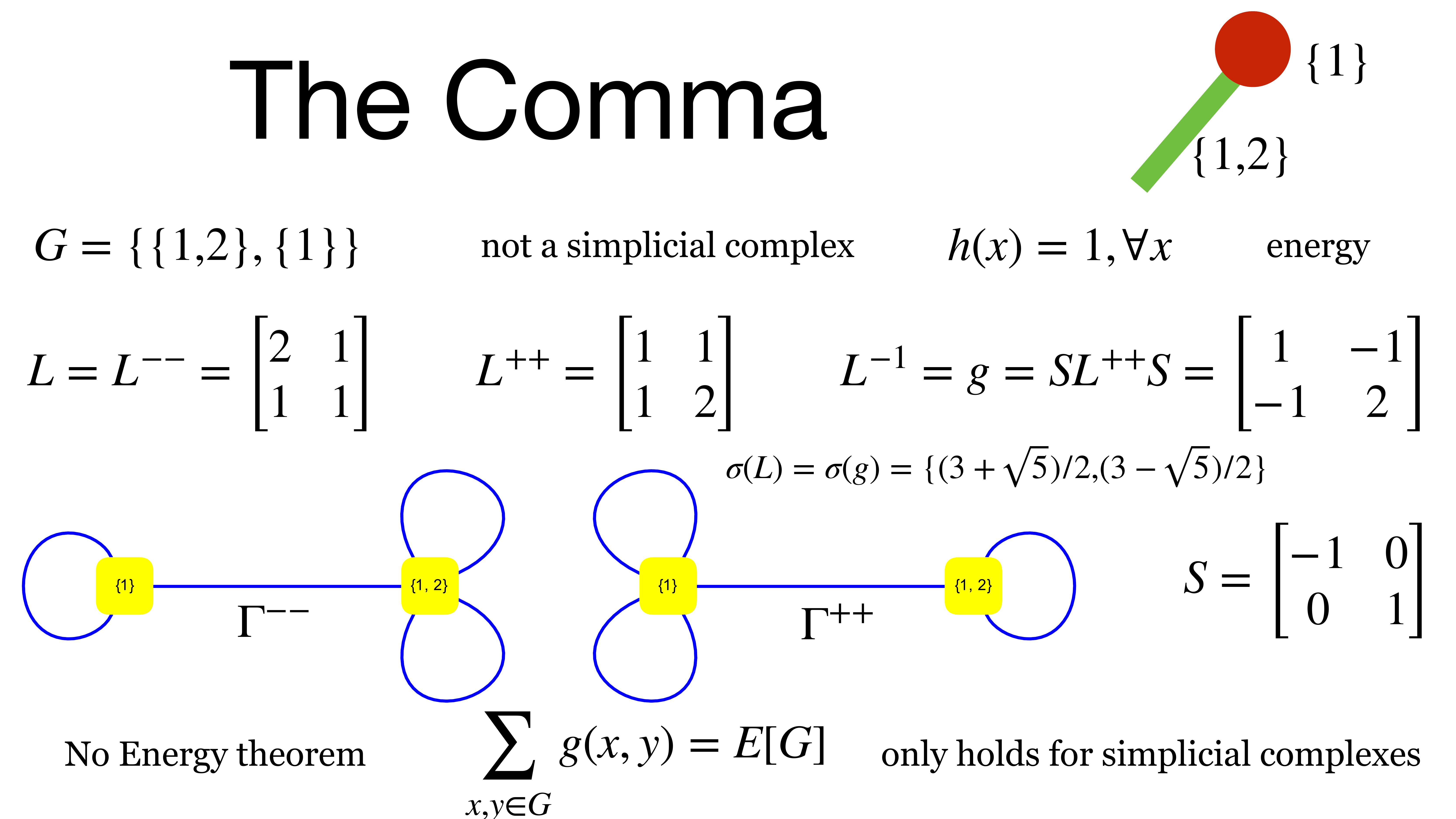}}
\label{energized1}
\caption{
A figure illustrating the story in one of the simplest cases.
$G$ is not a simplicial complex here. In the case of a constant energy $1$,
the matrices $L^{--}$ and $L^{++}$ define isospectral multi-graphs.
The matrix $g=S L^{++} S$ is the inverse of $L=L^{--}$. The energy theorem
does not hold here.
}
\end{figure}

\begin{figure}[!htpb]
\scalebox{0.2}{\includegraphics{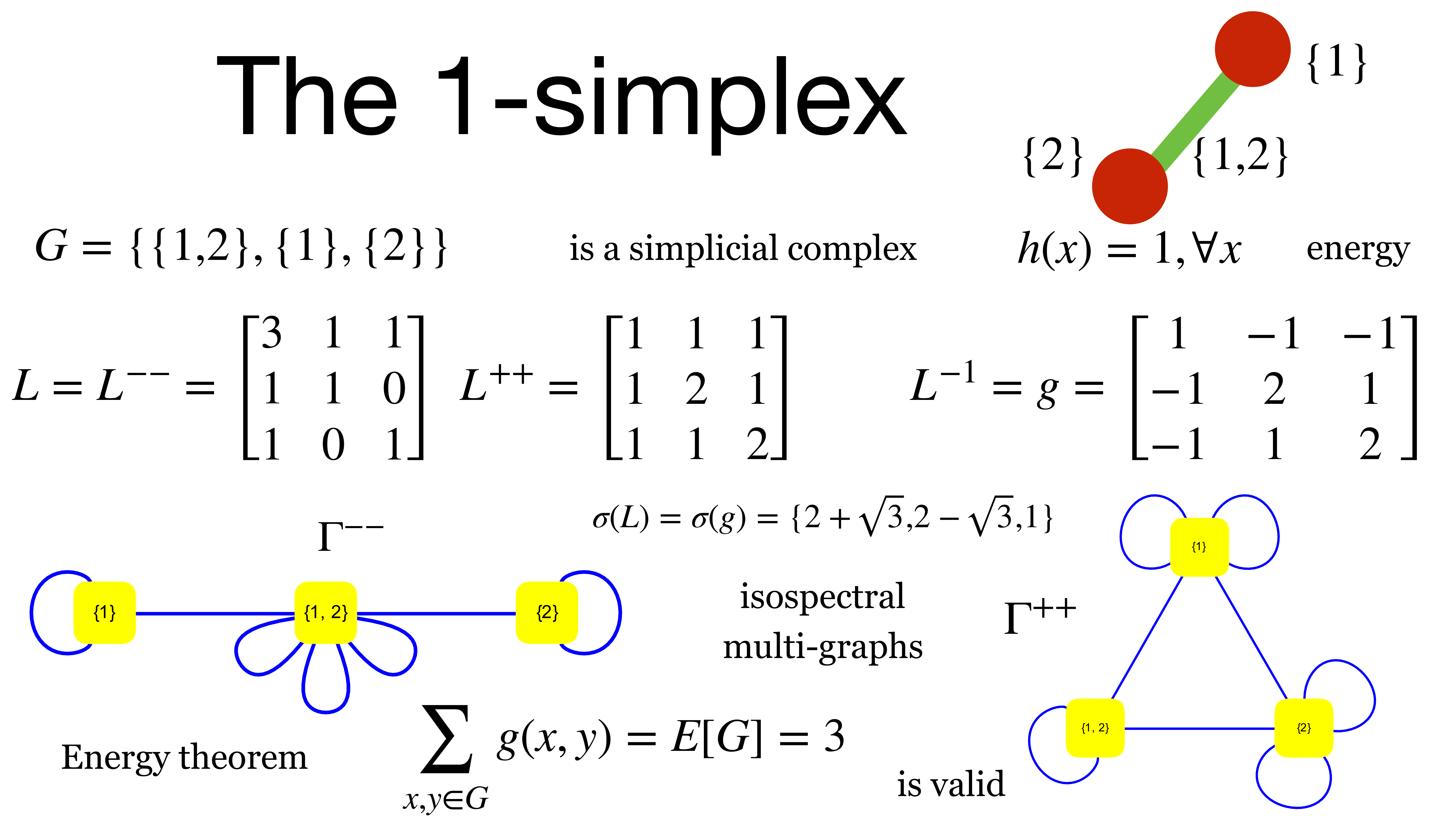}}
\label{energized2}
\caption{
In this case $G$ is a simplicial complex with three sets.
In the case of a constant energy $1$,
the matrices $L^{--}$ and $L^{++}$ define isospectral multi-graphs.
The matrix $g=S L^{++} S$ is the inverse of $L=L^{--}$. The energy theorem
does hold here. The total energy is $3$, the number of sets.
With the energy $\omega(x)=(-1)^{{\rm dim}(x)}$ the
total energy would have been $\chi(G)=1$, the Euler characteristic of $G$.
}
\end{figure}

\section{The parameter case}

\paragraph{}
Define ${\rm dim}(x)=|x|-1$ and $\omega(x)=(-1)^{{\rm dim}(x)}$ and the connection matrix
$$  L_{xy}(t) = t^{-{\rm dim}(x \cap y)} (1-f_{W^-(x) \cap W^-(y)}(t)) $$
which has rational expressions in $t$. Define the Green function matrix which is a polynomial in $t$: 
$$  g_{xy}(t) = \omega(x) \omega(y) (1-f_{W^+(x) \cap W^+(y)}(t))  \; . $$
These definitions are triggered by the case $t=-1$, where we have seen that the inverse of 
$L=L_{xy}(-1)$, a matrix for which $L_{xy}=1$ if $x$ intersects with $y$ and $L_{xy}=0$ else. 

\paragraph{}
\begin{thm}[Determinant]
${\rm det}(L_t) =(-1)^{f_G(1)-1} t^{f_G'(1)} = \prod_x (-t)^{|x|}$. 
\end{thm}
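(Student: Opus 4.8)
The plan is to read this determinant off from the general Determinant theorem proved above (the one giving ${\rm det}(L)={\rm det}(g)=\prod_{x}h(x)$ for an arbitrary finite set of sets), once the correct energy is identified, and then to rewrite the resulting product through $f_G(1)$ and $f_G'(1)$. The point is that the matrix here is the energized connection matrix for the energy $h(x)=-t^{|x|}$. Indeed, for any subcomplex $A$ we have $f_A(t)=1+\sum_k f_k(A)\,t^{k+1}$, so the normalization $E[A]=1-f_A(t)$ reads $E[A]=-\sum_k f_k(A)\,t^{k+1}=\sum_{x\in A}(-t^{|x|})$, which is additive with single-simplex value $h(x)=-t^{|x|}$ (using $|x|={\rm dim}(x)+1$). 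With this energy, $1-f_{W^-(x)\cap W^-(y)}(t)=E[W^-(x)\cap W^-(y)]=L(x,y)$ and $\omega(x)\omega(y)\,(1-f_{W^+(x)\cap W^+(y)}(t))=g(x,y)$, so the determinant in question is exactly ${\rm det}(L)={\rm det}(g)$ in the sense of that theorem.

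First I would invoke the general Determinant theorem, which needs no hypothesis on $G$ beyond being a finite set of sets, to obtain ${\rm det}(L)={\rm det}(g)=\prod_{x\in G}h(x)=\prod_{x\in G}(-t^{|x|})$. Separating the signs from the powers of $t$ writes this product as $(-1)^{n}\,t^{\sum_{x}|x|}$, where $n=|G|$ is the total number of simplices. It then remains to express the two exponents through the $f$-vector. Counting simplices by dimension gives $n=\sum_k f_k=f_G(1)-1$ (since $f_G(1)=1+\sum_k f_k$), which turns the sign into $(-1)^{f_G(1)-1}$; differentiating $f_G(t)=1+\sum_k f_k\,t^{k+1}$ and setting $t=1$ gives $f_G'(1)=\sum_k (k+1)f_k=\sum_{x}|x|$, which turns the power into $t^{f_G'(1)}$. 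Together these yield ${\rm det}(L)=(-1)^{f_G(1)-1}t^{f_G'(1)}=\prod_{x}(-t^{|x|})$, the asserted identity.

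All of the genuine combinatorics — the pairing and cancellation of interaction paths through each added cell — is already contained in the general Determinant theorem, so the only step that really needs care is the sign bookkeeping. I would stress that the operative energy is $h(x)=-t^{|x|}$, not $+t^{|x|}$: this is forced by the normalization $E[A]=1-f_A(t)$, and it is precisely this minus sign that produces both the global factor $(-1)^{f_G(1)-1}$ and the per-simplex factor $-t^{|x|}$, so the product is read as $\prod_x(-t^{|x|})=\prod_x h(x)$, the product of the simplex energies. For definiteness I would phrase the statement for the energized, polynomial matrices $L=L^{--}$ and $g=SL^{++}S$, to which the general theorem applies verbatim; the extra rational factor $t^{-{\rm dim}(x\cap y)}$ appearing in the normalized connection matrix is not needed for this determinant computation and can be suppressed here.
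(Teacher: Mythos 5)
Your proposal is sound and is essentially the paper's own argument in disguise: the paper's proof is a one-line induction asserting that each added cell multiplies the determinant by a monomial in $t$, which is precisely the cell-by-cell multiplicativity underlying the general Determinant theorem ${\rm det}(L)=\prod_x h(x)$ that you invoke with $h(x)=-t^{|x|}$; your added value is the explicit bookkeeping $n=f_G(1)-1$ and $\sum_x|x|=f_G'(1)$, which the paper leaves implicit. Two points deserve to be made explicit rather than passed over. First, your derivation actually \emph{corrects} the statement: you obtain $\prod_x(-t^{|x|})=(-1)^n t^{\sum_x|x|}$, whereas the theorem (and the paper's proof, which says each cell contributes a factor $(-t)^{|x|}$) asserts $\prod_x(-t)^{|x|}=(-1)^{\sum_x|x|}t^{\sum_x|x|}$; these differ by $(-1)^{\sum_x|x|-n}$, and only your version is consistent with $(-1)^{f_G(1)-1}t^{f_G'(1)}$. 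For $G=\{\{1\},\{2\},\{1,2\}\}$ the determinant of the energized matrix is $(-t)(-t)(-t^2)=-t^4$, while $\prod_x(-t)^{|x|}=t^4$, so the two right-hand sides of the theorem as printed are genuinely unequal; you should flag this instead of calling $\prod_x(-t^{|x|})$ ``the asserted identity.'' Second, suppressing the prefactor $t^{-{\rm dim}(x\cap y)}$ is not innocent: it is not a diagonal rescaling (the exponent depends jointly on $x$ and $y$) and it does change the determinant --- in the same three-element example the literal $L_t$ has determinant $-t^{-4}$, not $-t^4$. So what you prove is the statement for the polynomial matrices $E[W^-(x)\cap W^-(y)]$ and $g_t$, which is evidently what is intended (it is what the paper's code tests), but that restriction should be stated, not merely ``suppressed.'' With those two caveats made explicit, the proof is correct and, unlike the paper's one-liner, actually verifiable.
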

\begin{proof}
The result holds in general for discrete CW complexes. Proceed by induction on the number of cells.
Every time we add a cell, the determinant gets multiplied by $(-t)^{|x|}$.
\end{proof}

\paragraph{}
There are two values, $t=1$ and $t=-1$ for which the matrix $L_{xy}(t)$ is unimodular. For $t=-1$, the 
determinant of $L_{-1}$ is the Fermi number $(-1)^f=\prod_{x \in G} \omega(x)$, 
where $f$ is the number of odd dimensional simplices. For $t=1$, the determinant of $-L_1$ is
$\prod_{x \in G} 1 = 1$. 

\begin{thm}[Green Star]
$g_t = L_t^{-1}$.
\end{thm}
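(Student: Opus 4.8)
The plan is to verify the inverse relation by computing the product $L_t g_t$ entrywise and recognizing it as the $t$-weighted version of the calculation behind Theorem 1. Concretely, put $h(x)=t^{|x|}$ (the energy of the abstract, up to an overall sign), so that $g_t(x,y)=\omega(x)\omega(y)\,E[W^+(x)\cap W^+(y)]$ is exactly the matrix $g=S L^{++}S$, while $L_t(x,y)=t^{-{\rm dim}(x\cap y)}\,E[W^-(x)\cap W^-(y)]$ is the homoclinic matrix $L^{--}$ with each entry reweighted by $t^{-{\rm dim}(x\cap y)}$. The Determinant theorem already gives ${\rm det}(L_t)=\prod_x(-t)^{|x|}\neq 0$ for $t\neq 0$, so $L_t$ is invertible and it suffices to exhibit $g_t$ as a right inverse, i.e. to show $L_t g_t=I$.

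First I would expand both energies as sums over faces: $E[W^-(x)\cap W^-(y)]=\sum_{u\subseteq x\cap y}h(u)$ and $E[W^+(y)\cap W^+(z)]=\sum_{w\supseteq y\cup z}h(w)$. This rewrites $(L_t g_t)(x,z)$ as a triple sum over chains $u\subseteq y\subseteq w$ subject to $u\subseteq x$ and $z\subseteq w$, each term carrying the factor $t^{-{\rm dim}(x\cap y)}\,\omega(y)\,\omega(z)\,h(u)\,h(w)$. This is the same skeleton as the proof of Theorem 1; the only new feature is the weight $t^{-{\rm dim}(x\cap y)}$ sitting on the intermediate index $y$. The decisive step is then the inner sum over $y$ with $u\subseteq y\subseteq w$: writing $y=u\cup A$ with $A\subseteq w\setminus u$ and using $u\subseteq x$, one gets ${\rm dim}(x\cap y)=|u|+|x\cap A|-1$, so splitting $w\setminus u$ into its part inside $x$ and its part outside factors the inner sum into a product $(1-t^{-1})^{|(w\setminus u)\cap x|}\,(1-1)^{|(w\setminus u)\setminus x|}$. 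The second factor forces $w\subseteq x$, and the first yields a clean closed form. What survives is an outer sum over the Boolean interval $[z,x]$, which I expect to collapse by the alternating-sum identity $\sum_{z\subseteq w\subseteq x}(-1)^{|w|}=0$ whenever $z\subsetneq x$: this is the mechanism that kills every off-diagonal entry, while the singleton interval $z=x$ returns the diagonal value.

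The step I expect to be the main obstacle is precisely the bookkeeping of the $t$-powers together with the convention that faces are non-empty. The geometric and alternating sums above must be taken over non-empty $u$ only, and it is this restriction — rather than the naive binomial sum over all subsets — that determines whether the weight $t^{-{\rm dim}(x\cap y)}$ normalizes the surviving diagonal term to $1$ and makes the Boolean sum cancel, as opposed to landing on a nonzero power of $t$ times the identity. Getting this calibration exactly right is the crux; a running check on small cases (a single vertex and a single edge, where the triple sum can be evaluated by hand) is essential to fix the precise exponent. Two specializations serve as anchors: at $t=-1$ the matrix $L_t$ reduces to the connection matrix and the statement becomes Theorem 2, and at $h\equiv 1$ it is the constant-energy case treated earlier, so both must emerge from the general computation.

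As an alternative route I would keep in reserve an induction on the number of cells, parallel to the proof of the Determinant theorem, adding one simplex $x$ at a time and verifying $L_t g_t=I$ by a Schur-complement argument on the bordered matrices. Here the difficulty is that appending a top cell changes \emph{every} entry $L^{++}(u,v)$ with $u,v\subseteq x$, so the update is not rank one; to tame this I would exploit the duality $G\leftrightarrow\hat G$, which interchanges $L^{++}$ and $L^{--}$ and reduces the awkward block change for one matrix to the easily controlled single-column deformation of the other.
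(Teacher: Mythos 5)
Your overall mechanism is the right one, and it is worth saying up front that the paper itself offers no argument here (its ``proof'' is the single sentence ``See the computation before''), so a worked-out version of your triple sum would be a real addition. The inner step you describe is correct as far as it goes: for fixed $u\subseteq x$ and $w\supseteq z$ the sum over $u\subseteq y\subseteq w$ does factor over $w\setminus u$, the factor $(1-1)^{|(w\setminus u)\setminus x|}$ does force $w\subseteq x$, and this is exactly the skeleton of Theorem~1.

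The genuine gap is in the object you are inverting. You read $L_t(x,y)$ as $t^{-\dim(x\cap y)}E[W^-(x)\cap W^-(y)]$, i.e.\ as an entrywise power-of-$t$ rescaling of $L^{--}$, and you defer the exact exponent to a small-case check. But no exponent works: for a single vertex one gets $L_t g_t=t^{2}$ (or $t$, depending on the convention), and for the edge complex $G=\{\{1\},\{2\},\{1,2\}\}$ the $(3,3)$ entry of $g_t^{-1}$ is $-(1+2t)/t^{2}$ while $E[W^-(x)\cap W^-(y)]=-2t-t^{2}=-t(2+t)$, and $(2+t)$ is not a $t$-power multiple of $(1+2t)$. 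The matrix that actually inverts $g_t$ (the one used in the paper's code and in its displayed $5\times 5$ example) is $L_t(x,y)=1-f_{W^-(x)\cap W^-(y)}(t)\,t^{-|x\cap y|}$, which is an \emph{affine}, not linear, function of the energy: each entry with $x\cap y\neq\emptyset$ carries an extra additive term $1-t^{-|x\cap y|}$ that your triple sum omits entirely. Without it, the surviving outer sum is $\omega(z)\sum_{z\subseteq w\subseteq x}\bigl[(t-1)^{|w|}-(-1)^{|w|}\bigr]$ (times a power of $t$), and $\sum_{z\subseteq w\subseteq x}(t-1)^{|w|}=(t-1)^{|z|}t^{|x|-|z|}$ does not vanish for $z\subsetneq x$, so the off-diagonal cancellation you are counting on simply does not happen; one can check directly that with your $L_t$ the $(\{1,2\},\{1\})$ entry of $L_tg_t$ on the edge complex is $t^{3}-t^{2}$. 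So the ``calibration'' you flag as the crux is not a matter of fixing an exponent: the statement as you (and the paper's body) set it up is false, and the repair changes the form of $L_t$ and adds a whole new family of terms to the expansion. Your fallback induction-plus-duality route survives this correction better than the direct expansion does, but it too must be run with the affine $L_t$.
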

\begin{proof}
See the computation before.
\end{proof} 

\paragraph{}
The $f$-function of a complex $G$ is defined as $f_G(t) = 1+f_0 t + \cdots +f_d t^{d+1}$. 
We have

\begin{thm}[Energy theorem]
$\sum_{x,y} g_{t}(x,y) = 1-f_G(t)$ for all $t$. 
\end{thm}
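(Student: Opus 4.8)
The plan is to deduce this from the Energy theorem for energized complexes proved above, applied to the case-D energy, and then to record the one-line computation that makes the mechanism transparent. The first point is that $g_t$ is exactly the Green function $g=SL^{++}S$ for that energy: set $E[H]:=1-f_H(t)=-\sum_{z\in H}t^{|z|}$, which is additive in $H$ and hence a genuine energy with density $h(z)=-t^{|z|}$. Then $g_t(x,y)=\omega(x)\omega(y)\big(1-f_{W^+(x)\cap W^+(y)}(t)\big)=\omega(x)\omega(y)\,E[W^+(x)\cap W^+(y)]=g(x,y)$, so the Energy theorem gives $\sum_{x,y}g_t(x,y)=E[G]=1-f_G(t)$ with no new work. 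I would nevertheless include the direct argument, since it re-proves the Energy theorem in this case and exposes exactly where the simplicial-complex axiom is used.

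First I would expand the definition and interchange the order of summation. Since $z\in W^+(x)\cap W^+(y)$ precisely when $z$ contains both $x$ and $y$, i.e. $z\supseteq x\cup y$, grouping the ordered pairs $(x,y)$ by their common upper simplex $z$ gives
\[
\sum_{x,y}g_t(x,y)=\sum_{x,y}\omega(x)\omega(y)\!\!\sum_{z\,\supseteq\,x\cup y}\!\!h(z)=\sum_{z}h(z)\Big(\sum_{x\subseteq z}\omega(x)\Big)\Big(\sum_{y\subseteq z}\omega(y)\Big).
\]
Because $G$ is a simplicial complex, the sets $x\in G$ with $x\subseteq z$ are exactly the non-empty subsets of $z$, so each bracketed factor equals $\sum_{\emptyset\neq x\subseteq z}(-1)^{|x|-1}$.

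The key step is then the elementary identity $\sum_{\emptyset\neq x\subseteq z}(-1)^{|x|-1}=-\big((1-1)^{|z|}-1\big)=1$, valid for every non-empty $z$. Squaring, each bracketed factor is identically $1$, the double sum collapses to $\sum_{z}h(z)=E[G]$, and this is $1-f_G(t)$ by case D, finishing the proof. The hard part is not this algebra but the bookkeeping: $W^+(x)\cap W^+(y)$ is a star-type set of sets, not a complex, so one must check that $1-f_{(\cdot)}(t)$ is still the right energy there — which it is, by additivity — while the complex axiom is invoked on $G$ itself, exactly at the step where the inner sum ranges over all subsets of $z$ and therefore evaluates to $1$. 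For a general set of sets this last evaluation breaks, which is precisely why, as was flagged when the Energy theorem was stated, the identity is special to simplicial complexes.
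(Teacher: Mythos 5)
Your proof is correct, and it takes a genuinely different route from the paper's. The paper argues via a parametrized Poincar\'e--Hopf identity: it invokes $1-f_G(t)=-t\sum_x f_{S_g(x)}(t)$, defines the potential $k(x)=\sum_y g_t(x,y)$, and reduces the claim to showing $k(x)=-t\,f_{S_g(x)}(t)$ for a suitable locally injective function (the paper suggests $g=-{\rm dim}$ and leaves the verification as a sketch); this parallels its proof of the general energy theorem, which goes through the row-sum identity $\omega(x)g(x,x)=\sum_y g(x,y)$ together with the McKean--Singer spectral-energy result. You instead interchange the order of summation, group the ordered pairs $(x,y)$ by the simplices $z\supseteq x\cup y$ carrying the energy, and collapse each factor $\sum_{x\in G,\,x\subseteq z}\omega(x)$ to $1$ via the binomial identity $\sum_{\emptyset\neq x\subseteq z}(-1)^{|x|-1}=-\bigl((1-1)^{|z|}-1\bigr)=1$, which is legitimate precisely because the complex axiom makes $\{x\in G:\ x\subseteq z\}$ the full set of non-empty subsets of $z$. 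What your route buys: it is self-contained (no appeal to Poincar\'e--Hopf or to the spectral energy theorem), it simultaneously re-proves the general Energy theorem for energized complexes for an arbitrary energy density $h$, and it isolates the single step where the simplicial-complex axiom enters --- exactly the step that fails for general sets of sets, consistent with the paper's remark that the theorem does not extend there. Your preliminary reduction to the already-proved Energy theorem is also sound: $1-f_H(t)=-\sum_{z\in H}t^{|z|}$ is additive in $H$, so $g_t$ is the Green matrix $S L^{++} S$ for the density $h(z)=-t^{|z|}$ (note the sign relative to the abstract's $h(x)=t^{|x|}$, which you handle correctly), and the identity, being polynomial in $t$, follows for all $t$.
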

\begin{proof}
The parametrized Poincar\'e-Hopf theorem tells that 
$1-f_G(t) = -t \sum_{x} f_{S_g(x)}(t)$. 
Define the potential 
$k(x) =\sum_{y} g_{t}(x,y)$. It is enough to show that 
$k(x)=-t f_{S_g(x)}(t)$ for some function $g$. It seems
that $g=-{\rm dim}$ seems to work. 
\end{proof}

\paragraph{}
The case $t=1$ is of interest because, then $L$ is conjugated to its inverse $g$. 
The matrices $L_1,g_1 \in SL(n,\mathbb{Z})$ are isospectral and negative definite.
Let $\lambda_k$ be the eigenvalues of $-L_1$ which are also the eigenvalues of $-g_1$. 
Define the parametrized zeta-function of the complex $G$ as 
$$ \zeta_t(s) = \sum_{k=1}^n \lambda_k^{-s}  \; . $$
It is an entire function from $\mathbb{C} \to \mathbb{C}$ and unambiguously defined as
$\lambda_k^{-s}= e^{-\log(\lambda_k) s}$ with $\lambda_k>0$.  Because $\lambda(s) = \lambda(\overline{s})$
one has $\zeta_1(a+ib) = \zeta_1(-a+ib)$. 

\section{Representation}

\paragraph{}
The disjoint union of simplicial complexes forms a monoid which completes to a
group. Given an energy function $h$ on $G$ and an energy function $k$ on $H$,
there is a natural energy function $h_{G+H}$ on $G+H$ defined as $h_{G+H}(x)=h(x)$ if $x \in G$
and $(h+k)(x)=k(x)$ if $x \in H$. To extend the energy to the Grothendieck completion
define $h_{-G}(x) = -h_G(x)$. The Cartesian product $G*H$ has the energy $h_{G*H}(x \times y) = h_G(x) h_H(y)$. 
The total energy functional $E: G \to E[G] = \sum_{x \in G} h(x)$ is now a ring homomorphism almost
by definition.

\paragraph{}
To stay within a finite frame work assume $h$ to be integer valued also to stay closer to the
story to divisors. In the continuum, an integer valued function with finite
total energy $\sum_{x \in N} h(x)$ must to have a finite set as support, which means it is a divisor. 
With the disjoint union and Cartesian product, the class $\mathcal{X} = \{ X=(G,h) \}$ of 
simplicial complexes is a ring $\mathcal{X}$ which naturally extends the ring $\mathcal{G}$ of
simplicial complexes. The point is that we think of the energized complex as a geometric object
similarly as in the continuum, a vector bundle is a geometry object. 

\begin{propo}
a) The map $X=(G,h) \to G$ from $\mathcal{X}$ to $\mathcal{G}$  is a ring homomorphism. \\
b) The map $X=(G,h) \to E[X] = \sum_x h(x)$ from $\mathcal{X} \to \mathbb{Z}$ are ring homomorphisms.
\end{propo}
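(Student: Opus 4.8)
The plan is to check both maps directly against the definitions of the two ring operations on $\mathcal{X}$, which were set up precisely so that the underlying-complex operations are lifted and the energy is carried along: additively under the disjoint union and multiplicatively under the Cartesian product. Throughout I write $X=(G,h)$ and $Y=(H,k)$.

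For part a) I would observe that the forgetful map $\pi(G,h)=G$ is a homomorphism essentially by fiat. Since $X+Y=(G+H,h_{G+H})$ and $X*Y=(G*H,h_{G*H})$ by definition, one reads off $\pi(X+Y)=G+H=\pi(X)+\pi(Y)$ and $\pi(X*Y)=G*H=\pi(X)*\pi(Y)$. The empty complex and the one-point complex (carrying energy $1$) are the additive and multiplicative units of $\mathcal{X}$ and map to the corresponding units of $\mathcal{G}$, while $\pi(-X)=-G$, so $\pi$ descends to the Grothendieck completion. Nothing beyond unwinding definitions is needed here.

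For the additive half of part b), I would use that $G+H$ is a \emph{disjoint} union, so the sum over its cells splits:
$$E[X+Y]=\sum_{z\in G+H}h_{G+H}(z)=\sum_{x\in G}h(x)+\sum_{y\in H}k(y)=E[X]+E[Y].$$
Since $E[0]=0$ and $E[-X]=\sum_x(-h(x))=-E[X]$ on the completion, $E$ is a homomorphism of additive groups. The multiplicative half is the only step carrying content: every cell of $G*H$ is uniquely a product $x\times y$ with $x\in G$, $y\in H$, and $h_{G*H}(x\times y)=h_G(x)\,h_H(y)$ by definition, so
$$E[X*Y]=\sum_{x\in G,\,y\in H}h_G(x)\,h_H(y)=\Big(\sum_{x\in G}h_G(x)\Big)\Big(\sum_{y\in H}h_H(y)\Big)=E[X]\,E[Y],$$
the middle equality being the factorization of a finite double sum; the one-point complex with energy $1$ has $E=1$, so $E$ also preserves the multiplicative unit.

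I expect no serious obstacle: both maps are homomorphisms by construction. The only genuinely non-formal input is that the cells of $G*H$ biject with the pairs $(x,y)\in G\times H$ compatibly with associativity and distributivity, but this is exactly the assertion that $\mathcal{G}$ is a ring, which I would take as given. Granting that, the sole computation is the Fubini-type factorization of the product sum in the multiplicativity step; everything else follows from the cell-wise definitions of $h_{G+H}$ and $h_{G*H}$, which is why the author can call the result true ``almost by definition.''
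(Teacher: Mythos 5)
Your proof is correct and follows exactly the route the paper intends: the paper offers no written proof beyond the remark that the statement holds ``almost by definition,'' and your verification---the forgetful map preserving the operations by construction, additivity from the disjointness of $G+H$, and multiplicativity from the Fubini-type factorization $\sum_{x,y} h_G(x)h_H(y) = E[X]\,E[Y]$ over the cells $x\times y$ of $G*H$---is precisely the unwinding of definitions the author is alluding to. Nothing further is needed.
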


\paragraph{}
The prototype example is $h(x) = \omega(x)$ where the energy $E[G] = \chi(G)$ is the 
Euler characteristic of $G$. An other example is $h(x) = 1$, in which case $E[G] = |G|$ is the
number of elements. A third example is $h(x) = t^{|x|}$ in which case the total 
energy is a ring homomorphism from $\mathcal{N}$ to the polynomial ring $\mathcal{R}[t]$.
Energy functions $h$ of the form $h(x)=H(|x|)$ 
with multiplicative $H(n*m)=H(n) H(m)$ now defines a representation in a 
tensor ring of matrices. Examples are $h(x)=\omega(x)$ or $h(x)=1$ or $h(x)=|x|$. 

\paragraph{}
Now, if $h$ is an arbitrary energy function on complexes, then with the extension
$h_{G*H}(x \times y) = h_G(x) h_H(y)$, we have multiplicity and so:

\begin{thm}[Representation]
The map which assigns to $X=(G,h) \in \mathcal{X}$ the matrix $K$ 
is a ring homomorphism from $\mathcal{X}$ to the tensor ring of finite
matrices. 
\end{thm}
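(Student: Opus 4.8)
The plan is to make the target precise: the \emph{tensor ring of finite matrices} is the Grothendieck ring whose additive monoid consists of finite square integer matrices, taken up to conjugation by permutation matrices (so that $A\oplus B$ and $B\oplus A$ are identified), with direct sum $\oplus$ as addition and Kronecker product $\otimes$ as multiplication; the empty matrix is the additive zero and the $1\times 1$ matrix $[1]$ is the multiplicative unit. Writing $K$ for the connection matrix assigned to $X=(G,h)$ (any of $L=L^{--}$, $L^{++}$, or $g=SL^{++}S$ works verbatim), the claim reduces to the two structural identities
$$ K(G+H) = K(G)\oplus K(H), \qquad K(G*H)=K(G)\otimes K(H), $$
together with the normalization sending the one-point unit complex (the vertex with energy $1$) to $[1]$. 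Since $E$ is already a ring homomorphism by the preceding proposition, establishing these two identities is all that remains.

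For additivity I would index rows and columns of $K(G+H)$ by the disjoint union of the cells of $G$ and of $H$. Because $G+H$ lives on disjoint vertex sets, a cell $x$ of $G$ and a cell $y$ of $H$ have $W^{\pm}(x)\cap W^{\pm}(y)=\emptyset$, so the off-block entries $E[\,W^{\pm}(x)\cap W^{\pm}(y)\,]=E[\emptyset]=0$ vanish, while each diagonal block reproduces $K(G)$ and $K(H)$ verbatim since $W^{\pm}$ and the energy are all computed within a single component. Hence $K(G+H)$ is block diagonal and equals $K(G)\oplus K(H)$; the conjugation in $g=SL^{++}S$ respects this because $S(G+H)=S(G)\oplus S(H)$.

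The multiplicative identity is the heart of the argument and the step I expect to cost the most care. I would first record the combinatorics of the Cartesian product: its cells are the products $x\times y$ with $x\in G$, $y\in H$, ordered by the product order, so that $W^{+}(x\times y)=W^{+}(x)\times W^{+}(y)$ and $W^{-}(x\times y)=W^{-}(x)\times W^{-}(y)$, and therefore
$$ W^{+}(x_1\times y_1)\cap W^{+}(x_2\times y_2) = \bigl(W^{+}(x_1)\cap W^{+}(x_2)\bigr)\times\bigl(W^{+}(y_1)\cap W^{+}(y_2)\bigr). $$
The decisive input is the factorization of the energy over products: from the defining rule $h_{G*H}(x\times y)=h_G(x)\,h_H(y)$ one gets, for any set $A$ of $G$-cells and any set $B$ of $H$-cells, that $E[A\times B]=E_G[A]\,E_H[B]$. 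Combining these gives $L^{++}_{G*H}\bigl((x_1,y_1),(x_2,y_2)\bigr)=L^{++}_G(x_1,x_2)\,L^{++}_H(y_1,y_2)$, which is exactly the $\bigl((x_1,y_1),(x_2,y_2)\bigr)$ entry of $L^{++}_G\otimes L^{++}_H$. The same computation applies to $L^{--}$, and since $\omega(x\times y)=\omega(x)\omega(y)$ yields $S_{G*H}=S_G\otimes S_H$, the mixed-product property of the Kronecker product transports the identity to $g_{G*H}=(S_G\otimes S_H)(L^{++}_G\otimes L^{++}_H)(S_G\otimes S_H)=g_G\otimes g_H$.

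Finally I would address well-definedness and the Grothendieck passage. Commutativity of $+$ and $*$ on $\mathcal{X}$ only holds after identifying $A\oplus B$ with $B\oplus A$ and $A\otimes B$ with $B\otimes A$, which is why the target is taken up to permutation conjugation; the swap permutation realizing the commutativity of $\otimes$ makes this harmless. The additive identities then extend uniquely to the Grothendieck group by its universal property, sending the formal inverse $-X$ (with $h_{-G}=-h_G$) to the corresponding virtual matrix, and since $\otimes$ is bilinear over $\oplus$ it descends to the completion and distributes over addition. The main obstacle is thus not any delicate estimate but the careful bookkeeping of the product order on cells and the verification that the energy truly factorizes over Cartesian products; once $E[A\times B]=E_G[A]\,E_H[B]$ is in hand, the Kronecker structure follows mechanically.
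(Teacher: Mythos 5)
The paper states this theorem without any proof at all; the only justification offered is the preceding sentence ``we have multiplicity and so,'' so your write-up is not an alternative route but a genuine filling-in of an omitted argument, and it is the argument the paper is gesturing at. Your two structural identities are the right ones, and the core computations check out: for the disjoint union the off-blocks vanish because $W^{\pm}(x)\cap W^{\pm}(y)=\emptyset$ when $x,y$ live in different components, and for the product the factorization $E[A\times B]=E_G[A]\,E_H[B]$ follows from $h_{G*H}(x\times y)=h_G(x)h_H(y)$ together with $W^{\pm}(x_1\times y_1)\cap W^{\pm}(x_2\times y_2)=\bigl(W^{\pm}(x_1)\cap W^{\pm}(x_2)\bigr)\times\bigl(W^{\pm}(y_1)\cap W^{\pm}(y_2)\bigr)$, giving $L^{\pm\pm}_{G*H}=L^{\pm\pm}_G\otimes L^{\pm\pm}_H$ exactly as you say. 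One step deserves a caveat: your claim $\omega(x\times y)=\omega(x)\omega(y)$, hence $S_{G*H}=S_G\otimes S_H$, holds only under the CW-complex convention $\dim(x\times y)=\dim(x)+\dim(y)$; if $x\times y$ is read literally as a set of $|x|\,|y|$ pairs, then $\omega(x\times y)=(-1)^{|x||y|-1}$, which differs from $(-1)^{|x|-1}(-1)^{|y|-1}$ whenever $|x|$ and $|y|$ are both even, and the identity $g_{G*H}=g_G\otimes g_H$ would fail. The paper's own definition of $h_{G*H}$ (which for $h=\omega$ forces the product cell to carry sign $\omega(x)\omega(y)$) shows the CW convention is the intended one, and in any case the statement for $K=L^{--}$ or $K=L^{++}$ is immune to this issue; you would do well to either fix $K$ to be one of these or state the dimension convention explicitly. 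Your remarks on well-definedness up to permutation conjugation and on the Grothendieck passage are appropriate and are likewise absent from the paper.
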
 

\paragraph{}
One can assign to $X=(G,h)$ also a graph divisor $(\Gamma=(V,E),h)$, where 
$V=G,E=\{ (a,b) \; | \; a \cap b \neq \emptyset \}$. The graph $\Gamma$ is the 
connection graph of $G$. The addition is the disjoint
union still and the multiplication is the Sabidussi muliplication. Also this
is a ring. The graph complement operation maps this ring into subring of the
Zykov ring of graphs with join and dual Sabidussi multiplication, but now also
equipped with the energy. This is an interesting link as the analysis shows that
on the geometric side, there is a natural norm $| \cdot |$, which is 
the independence number of $\Gamma$. The integer ring $\mathcal{X}$ is therefore 
an arithmetic object with a norm $|(X,h)| = \sqrt{|X|^2+|h|^2}$ satisfying the inequality 
$|(X*Y,h*k)| \leq |(X,h)| |(Y,k)|$ and therefore can as $\mathcal{G}$ already be
extended to a Banach algebra $\mathcal{R}$.

\section{Gauss-Bonnet}

\paragraph{}
The classical Gauss-Bonnet theorem writes the Euler characteristic as a sum of curvatures.
It generalizes to valuations $X$, real valued maps from sub structures having the property 
$X(G \cup H) + X(G \cap H) = X(G) + X(H)$, but it holds also in a Hamiltonian 
set-up. If $\Gamma$ is a graph with Whitney complex $G$ equipped with a Hamiltonian $h$
define the parametrized energy $E_G(t)= \sum_{x \in G} h(x) t^{|x|}$ which is for $t=1$ 
the energy. Now $K(v) = \int_0^1 E_{S(v)}(s) \; ds$ defines a curvature for the graph and the 
energy can be written as

\begin{propo}
$E(G) = \sum_{v} K(v)$. 
\end{propo}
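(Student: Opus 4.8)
The plan is to read the proposition as a discrete Gauss--Bonnet identity and prove it by the three-move pattern underlying all such results in this circle of ideas: push the integral through a finite sum by linearity, rearrange the resulting double sum by a handshake (double-counting) argument, and collapse the answer with the fundamental theorem of calculus. The organizing object is the parametrized total energy $\mathcal{E}_G(s)=\sum_{x\in G} h(x)\,s^{|x|}=E_G(s)$ already introduced, for which $\mathcal{E}_G(1)=\sum_x h(x)=E(G)$ and $\mathcal{E}_G(0)=0$, since every simplex has $|x|\ge 1$. The goal is therefore to identify $\sum_v K(v)$ with $\int_0^1 \mathcal{E}_G'(s)\,ds=\mathcal{E}_G(1)-\mathcal{E}_G(0)=E(G)$.

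First I would expand the curvature. Since $K(v)=\int_0^1 E_{S(v)}(s)\,ds$ and $E_{S(v)}$ is a finite polynomial in $s$, linearity of the integral gives $K(v)=\sum_{x\in S(v)} h(x)\int_0^1 s^{e(x)}\,ds$, where $e(x)$ is the exponent attached to $x$ in the local energy polynomial. Summing over all vertices and interchanging the two finite sums (Fubini for finite sums) yields
\[
 \sum_v K(v)=\sum_{x} h(x)\,N(x)\int_0^1 s^{e(x)}\,ds,
\]
where $N(x)=\#\{v : x\in S(v)\}$ counts how many local spheres see the simplex $x$.

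The heart of the argument is the handshake identity $\sum_v E_{S(v)}(s)=\mathcal{E}_G'(s)$. Each simplex $x$ is shared by exactly its $|x|$ vertices, so $N(x)=|x|$, and the exponent bookkeeping must be arranged so that the total contribution of $x$ to $\sum_v E_{S(v)}(s)$ is $h(x)\,|x|\,s^{|x|-1}$, which is precisely $\tfrac{d}{ds}\bigl(h(x)s^{|x|}\bigr)$. Summing over $x$ gives $\mathcal{E}_G'(s)$. A clean sanity check is the energy-free case $h\equiv 1$, where this specializes to $\sum_v f_{S(v)}(t)=f_G'(t)$: indeed $\sum_v f_{j}(S(v))=(j+2)\,f_{j+1}(G)$ because each $(j{+}1)$-simplex has $j{+}2$ vertices, and this reindexes exactly into the derivative of $f_G(t)=1+\sum_k f_k t^{k+1}$. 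With the handshake identity in hand, integrating term by term and applying the fundamental theorem of calculus gives $\sum_v K(v)=\int_0^1 \mathcal{E}_G'(s)\,ds=\mathcal{E}_G(1)-\mathcal{E}_G(0)=E(G)$, which is the claim.

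The main obstacle I expect is not analytic but combinatorial bookkeeping: making the handshake identity exact requires pinning down the convention for $S(v)$ and the exponent $e(x)$ so that $N(x)\int_0^1 s^{e(x)}\,ds=1$ for every simplex. Concretely one must reconcile the off-by-one between the cardinality $|x|$ used in $E_G(t)=\sum_x h(x)t^{|x|}$ and the dimension $\dim(x)=|x|-1$ that appears when a vertex $v$ of $x$ is singled out and $x$ is recorded in $S(v)$ as the shifted simplex $x\setminus\{v\}$; this is what forces $\int_0^1 s^{|x|-1}\,ds=1/|x|$ to cancel the multiplicity $N(x)=|x|$. One must also make sure the energies on $S(v)$ are the pullbacks $y\mapsto h(y\cup\{v\})$ rather than the bare restrictions, and that the vertex's own term $h(\{v\})$ is retained (the energized analogue of the $V_{-1}=1$ term in the classical curvature), since dropping it loses exactly $\sum_v h(\{v\})$. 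Once this convention is fixed the remaining steps are the routine linearity, interchange, and fundamental-theorem-of-calculus manipulations above.
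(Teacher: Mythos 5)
Your proposal is correct and takes essentially the same route as the paper: the paper's one-line proof --- each simplex distributes its energy equally to its vertices, each share being $\int_0^1 h(x)\,t^{|x|}\,dt = h(x)/(|x|+1)$ --- is exactly your handshake identity $N(x)\int_0^1 s^{e(x)}\,ds=1$, and your derivative-plus-fundamental-theorem packaging is just a reorganization of the same double count. You are also right that the only delicate point is the off-by-one convention for $S(v)$ and the pullback of $h$ to the link, which the paper glosses over in precisely the way you describe.
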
 
\begin{proof}
A simplex of cardinality $|x|$ distributes the energy equally to $|x|+1$ points so 
that each point gets $h(x)/(|x|+1)$ which is $\int_0^1 h(x) t^{|x|} dt$. 
\end{proof}

\paragraph{}
Also here, we note already that this does simple principle does not even require the simplicial complex
structure. We just have to define $|x|$ in general as the number of atoms in $x$, 
where an ``atom" is a set in $G$ which does not have a proper non-empty subset. 
The curvature will then be supported on atoms of the structure. 

\paragraph{}
Poincar\'e-Hopf deals with a locally injective function $g$ on 
$G$. Define $S_g(v) = \{ y  \in G \; | \; g(y)<g(v) \}$. Now, we also have 
additionally an energy funtion $h$ given. This function $h$ does not need
to have any thing to do with $g$. But $h$ could be the index of $g$.

\begin{propo}
$E_G = \sum_{v \in V} E_{S_g(v)}$,
\end{propo}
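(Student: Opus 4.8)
The plan is to prove the identity by the energized Poincar\'e--Hopf partition, the same device underlying the parametrized statement $1-f_G(t)=-t\sum_x f_{S_g(x)}(t)$ invoked earlier. Since $g$ is locally injective, on every simplex $x\in G$ the restriction $g|_x$ attains its maximum at a unique vertex $m(x)\in x$. First I would use $m$ to partition $G=\bigsqcup_{v\in V}G_v$ into the fibers $G_v=\{x\in G : m(x)=v\}$; because the energy $E[A]=\sum_{x\in A}h(x)$ is additive over disjoint subcollections of $G$, this immediately gives $E[G]=\sum_{x\in G}h(x)=\sum_{v\in V}\sum_{x\in G_v}h(x)=\sum_{v\in V}E[G_v]$, so the whole content is the identification of each fiber energy with $E_{S_g(v)}$.

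The second step is to see each fiber as the $g$-descending part of the star of $v$. A simplex $x$ lies in $G_v$ exactly when $v\in x$ and every other vertex $w\in x$ satisfies $g(w)<g(v)$; equivalently $x=\{v\}\cup\sigma$ with $\sigma=x\setminus\{v\}$ a (possibly empty) simplex of $S_g(v)=\{w\in S(v): g(w)<g(v)\}$, the part of the unit sphere below $v$. The map $x\mapsto x\setminus\{v\}$ is then a bijection from $G_v$ onto the faces of the cone $v*S_g(v)$, and under it $E[G_v]=\sum_{\sigma}h(\{v\}\cup\sigma)$ is precisely the coned energy that the proposition abbreviates as $E_{S_g(v)}$. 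Here the Whitney-complex structure is what guarantees that $x\setminus\{v\}$ is again a simplex of $G$, so that the fibers really are cones and the bijection is well defined.

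Combining the two steps yields $E[G]=\sum_{v\in V}E_{S_g(v)}$. As a consistency check I would specialize to $h(x)=t^{|x|}$: coning adds one vertex to each face of $S_g(v)$, so $E[G_v]=t\,f_{S_g(v)}(t)$, and summing recovers $E[G]=f_G(t)-1=t\sum_{v}f_{S_g(v)}(t)$, matching the earlier parametrized display. The main obstacle is interpretive bookkeeping rather than depth: one must pin down that $E_{S_g(v)}$ denotes the energy of the cone $v*S_g(v)$, \emph{including} the apex term $h(\{v\})$ arising from the empty face $\sigma=\emptyset$, so that the literal sublevel-set reading of $S_g(v)$ agrees with the fiber $G_v$; and one must verify that local injectivity of $g$ makes $m(x)$ well defined for every $x$, so the fibers genuinely partition $G$ with no simplex counted twice or omitted.
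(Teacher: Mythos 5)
The paper states this proposition without any proof at all --- it is immediately followed by ``We will discuss this a bit more in a follow-up'' --- so there is nothing to compare your argument against; you have supplied the missing proof. Your argument is correct and is the standard Poincar\'e--Hopf mechanism implicit in the earlier parametrized display $1-f_G(t)=-t\sum_x f_{S_g(x)}(t)$: local injectivity of $g$ on the vertices gives each simplex $x$ a unique maximal vertex $m(x)$, the fibers $G_v=\{x: m(x)=v\}$ partition $G$, additivity of $E$ gives $E[G]=\sum_v E[G_v]$, and the bijection $x\mapsto x\setminus\{v\}$ identifies $G_v$ with the faces of the cone $v*S_g(v)$. Your consistency check against $h(x)=t^{|x|}$ is exactly the right sanity test. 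Two remarks. First, the interpretive issue you flag is genuine and you resolve it in the only way that makes the statement true: $E_{S_g(v)}$ must mean the energy of the coned fiber (each $\sigma\subset S_g(v)$, including $\sigma=\emptyset$, contributing $h(\{v\}\cup\sigma)$), not the literal energy $\sum_{y\in S_g(v)}h(y)$ of the sublevel set, which already fails for $G=\{\{1\},\{2\},\{1,2\}\}$; note also that the paper's displayed definition $S_g(v)=\{y\in G: g(y)<g(v)\}$ omits the restriction $y\in S(v)$ to the unit sphere, and your reading is the one consistent with the author's other work. Second, you lean on the Whitney/simplicial structure to guarantee $x\setminus\{v\}\in G$, but that is only needed for the cosmetic identification of $G_v$ with a cone; the partition $E[G]=\sum_v E[G_v]$ itself needs nothing beyond a well-defined $m(x)$, which is consistent with the paper's closing claim that the result persists for general energized sets of sets.
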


\paragraph{}
We will discuss this a bit more in a follow-up. The upshot is that 
both Gauss-Bonnet, as well as Poincar\'e-Hopf hold not only for 
simplicial complexes but in a rather general set-up of energized
sets of sets. 

\section{Remarks} 

\paragraph{}
We have seen already that many of the results can be adapted to the situation where $G$ is just a set of finite non-empty sets. 
This is especially true for results about determinant and spectrum. Having a more symmetric
category, where one can also take the Boolean dual $\hat{G}$ of $G$ produces more simmetry. 
The matrices $L^{++}$ and $L^{--}$ are then completely dual
each other. Of course, there is still a reason to look at simplicial complexes. 
One of them is cohomology, an other is to be closer to classical geometries and 
especially manifolds. The escape to the larger category of sets of sets is illuminating however.

\paragraph{}
The nomenclature $L^{++} = E[W^+ \cap W^+]$, and 
$L^{--} = E[W^- \cap W^-]$ and $L^{+-}(x) = E[W^+(x) \cap W^-(y)]$
and $L^{-+} = E[W^-(x) \cap W^+(y)]$ is more symmetric. 
In the topological case $h(x)=\omega(x)$, the matrix $w=S L S$ is of interest
as $\sum_{x,y} w(x,y) = \omega(G)$ is the Wu characteristic of the complex
defined by $\sum_{x \sim y} \omega(x) \omega(y)$ summing over intersecting pairs $x \sim y$.
This suggests to look in general at the sum $W[G]=\sum_{x,y} S L S(x,y)$. 
This is the symmetric analogue of the sum $E[G] = \sum_{x,y} g(x,y)$. 
We have not yet investigated this. A first interesting case is in the constant
energy case $h(x)=1$, where $W[G] = \sum_{x,y} \omega(x) \omega(y) (2^{|x \cap y|}-1)$.

\paragraph{}
We could also look at the polynomial version $-g_{xy}(t)/t^{dim(x)}$ which is not symmetric
but has constant determinant $1$ and 
has the rational inverse $L_{xy}(t) = ((1+t)^{d+1} - t^{d+1})/t^d$, where $d={\rm dim}(x \cap y)$. 
For $t=0$, these are triangular matrices.  

\paragraph{}
The palindromic coefficient list $p_k$ of the characteristic polynomial ${\rm det}(g-\lambda)$ 
in the constant energy case $h(x)=1$ is very regular. If we plot the log of the 
coefficients, it approaches a parabola. But this regularity is only present
if we chose constant energy values. Already if we replace $h(x)=1$ with $h(x)=\omega(x)$, the 
parabola is less smooth. 

\paragraph{}
The choice of using integers $h(x)$ as energy values has reasons. The assumption is not necessary 
for the theorems covered here. But it allows to see $h$ as a divisor, which is an
integer valued function on a geometry with a finite set as support, the prototype is a divisor of a rational
function on a variety which as principal divisors define an equivalence class of all divisors.
Riemann-Roch expresses the energy $E(G)$ as a signed distance to $\{ E(G)=\chi(G) \}$.
In the continuum, where the base energy $E[X]=\omega(x)$ is included, the energy is written as
$\chi(G) + {\rm deg}(D)$. In the discrete, where one can chose space itself is part of the divisor
(in the form $h(x)=\omega(x)$ with total energy the Euler characteristic) and
rather than writing $l(D)-l(K-D)$, we just write $l(D)-l(-D)$ and
see $l(D)$ and $l(-D)$ as the analogous to $l(x)={\rm max}(x,0)$ and $l(-x)={\rm max}(-x,0)$ which
gives $|x|=l(x)-l(-x)$ which is Riemann-Roch for a 1-point space.

\paragraph{}
It is the privilege of finite geometries that we can chose a ``clean slate" zero energy $K=0$
as the canonical divisor $K$. In the continuum, (where we can not access lower dimensional ``atoms of
space" as points, we need sheave theoretical constructs like differential forms)
this is not possible and we need a canonical divisor (a global meromorphic function which has
as the degree the Euler characteristic of the curve).
Furthermore, in the continuum, the discreteness of energy forces an energy
function to be located on a finite set of points and where writing the Euler characteristic as an
energy of a natural divisor. In the finite, the geometry is implemented in the form of the divisor
$h(x)=(-1)^{{\rm dim}(x)}$ which assigns to a set an energy so that the total energy is
Euler characteristic.

\paragraph{}
Alternatively, one can think of $h(x)$ as an energy excitement level of a quantum harmonic oscillator.
While in physics, one might prefer $1/2 + h(x)$ with integer $h(x)$, a geometer would just write
$h(x) = \omega(x) + D(x)$ so that the energy
is $\chi(G) + {\rm deg}(D)$ as in Riemann-Roch. That theorem motivates to see
a divisor (similarly as a vector bundle) as a geometry by itself and extend the category of
simplicial complexes to simplicial complexes which are ``energized".
Essentially everything done in geometry works in the energized frame work. Examples
are Gauss-Bonnet, Euler-Poincar\'e. Riemann-Roch is a form of Euler-Poincar\'e for a
cohomology. Changing the divisor by adding principal divisors behaves like homotopy deformations.
The dimension value $l(G)$ appearing in Riemann Roch is a homotopy invariant in this picture.

\section{Code}

\paragraph{}
The following Mathematica code generates the matrix $K$ and its inverse $K^{-1}$,
and the zeta function for a random complex according to the definitions and
illustrates the results in examples. As usual, the code can be grabbed from
the ArXiv. It should serve as pseudo code also:

\begin{tiny}
\lstset{language=Mathematica} \lstset{frameround=fttt}
\begin{lstlisting}[frame=single]
Generate[A_]:=Delete[Union[Sort[Flatten[Map[Subsets,A],1]]],1];
RandomSets[n_,m_]:=Module[{A={},X=Range[n],k},Do[k:=1+Random[Integer,n-1];
  A=Append[A,Union[RandomChoice[X,k]]],{m}];Sort[Generate[A]]];
G=RandomSets[6,9];n=Length[G];    e=Table[RandomChoice[{1,-1}],{k,n}];
S=Table[-(-1)^Length[G[[k]]]*If[k ==l,1,0],{k,n},{l,n}]; (* Super matrix   *)
energy[A_]:=If[A=={},0,Sum[e[[Position[G,A[[k]]][[1,1]]]],{k,Length[A]}]];
star[x_]:=Module[{u={}},Do[v=G[[k]];If[SubsetQ[v,x],u=Append[u,v]],{k,n}];u];
core[x_]:=Module[{u={}},Do[v=G[[k]];If[SubsetQ[x,v],u=Append[u,v]],{k,n}];u];
Wminus     = Table[Intersection[core[G[[k]]],core[G[[l]]]],{k,n},{l,n}];
Wplus      = Table[Intersection[star[G[[k]]],star[G[[l]]]],{k,n},{l,n}];
Wminusplus = Table[Intersection[star[G[[k]]],core[G[[l]]]],{k,n},{l,n}];
Wplusminus = Table[Intersection[core[G[[k]]],star[G[[l]]]],{k,n},{l,n}];
Lminus     = Table[energy[Wminus[[k,l]]],    {k,n},{l,n}];  L  =      Lminus;
Lplus      = Table[energy[Wplus[[k,l]]],     {k,n},{l,n}];  g  =   S.Lplus.S;
Lminusplus = Table[energy[Wminusplus[[k,l]]],{k,n},{l,n}];
Lplusminus = Table[energy[Wplusminus[[k,l]]],{k,n},{l,n}];
Potential  = Table[Sum[g[[k,l]],{l,n}],{k,n}];
Superdiag  = Table[S[[k,k]]*g[[k,k]],{k,n}];
Print[{
Total[Flatten[g]] == Total[e],                 (* Potential energy         *)
Tr[S.g] == Total[e],                           (* Spectral energy          *)
Det[g]==Det[L]==Product[e[[k]], {k, n}],       (* Unimodularity Theorem    *)
Sort[Sign[Eigenvalues[1.0 L]]]==Sort[Sign[e]], (* eigenvalues of L         *)
Sort[Sign[Eigenvalues[1.0 g]]]==Sort[Sign[e]], (* eigenvalues of g         *)
Superdiag == Potential,                        (* selfinteraction=potential*)
g.L ==IdentityMatrix[n]}]                      (* only true in spin case   *)
\end{lstlisting}
\end{tiny}

\paragraph{}
Here is the code to draw the multi-graphs $\Gamma^{++}, \Gamma^{--}$ defined by their
adjacency matrices $L^{--},L^{++}$. In this example, $G$ is not a 
simplicial complex and the graphs have the adjacency matrices
$$ L^{--} = \left[ \begin{array}{ccccc}
 5 & 2 & 2 & 1 & 1 \\
 2 & 2 & 0 & 1 & 0 \\
 2 & 0 & 2 & 0 & 1 \\
 1 & 1 & 0 & 1 & 0 \\
 1 & 0 & 1 & 0 & 1 \\ \end{array} \right], 
   L^{++} = \left[ \begin{array}{ccccc}
 1 & 1 & 1 & 1 & 1 \\
 1 & 2 & 1 & 2 & 1 \\
 1 & 1 & 2 & 1 & 2 \\
 1 & 2 & 1 & 3 & 1 \\
 1 & 1 & 2 & 1 & 3 \\ \end{array} \right] \; . $$

\begin{tiny}
\lstset{language=Mathematica} \lstset{frameround=fttt}
\begin{lstlisting}[frame=single]
G = {{1,2,3}, {1,2}, {2,3}, {1}, {3}}; n = Length[G]; e = {1, 1, 1, 1, 1};
energy[A_]:=If[A=={},0,Sum[e[[Position[G,A[[k]]][[1,1]]]],{k,Length[A]}]];
star[x_]:=Module[{u={}},Do[v=G[[k]];If[SubsetQ[v,x],u=Append[u,v]],{k,n}];u];
core[x_]:=Module[{u={}},Do[v=G[[k]];If[SubsetQ[x,v],u=Append[u,v]],{k,n}];u];
Wminus     = Table[Intersection[core[G[[k]]],core[G[[l]]]],{k,n},{l,n}];
Wplus      = Table[Intersection[star[G[[k]]],star[G[[l]]]],{k,n},{l,n}];
Lminus     = Table[energy[Wminus[[k,l]]],    {k,n},{l,n}]; 
Lplus      = Table[energy[Wplus[[k,l]]],     {k,n},{l,n}];
U=Graph[Flatten[Table[Table[G[[k]]<->G[[l]],{Lminus[[k,l]]}],{k,n},{l,k,n}]]];
V=Graph[Flatten[Table[Table[G[[k]]<->G[[l]],{Lplus[[k,l]]}] ,{k,n},{l,k,n}]]];
DrawGraph[s_]:=Graph[s,GraphStyle->"SmallNetwork"]; 
Print["Isospectral:    ", Eigenvalues[Lminus] == Eigenvalues[Lplus]];
{DrawGraph[U],DrawGraph[V]}
\end{lstlisting}
\end{tiny}

\begin{figure}[!htpb]
\scalebox{0.8}{\includegraphics{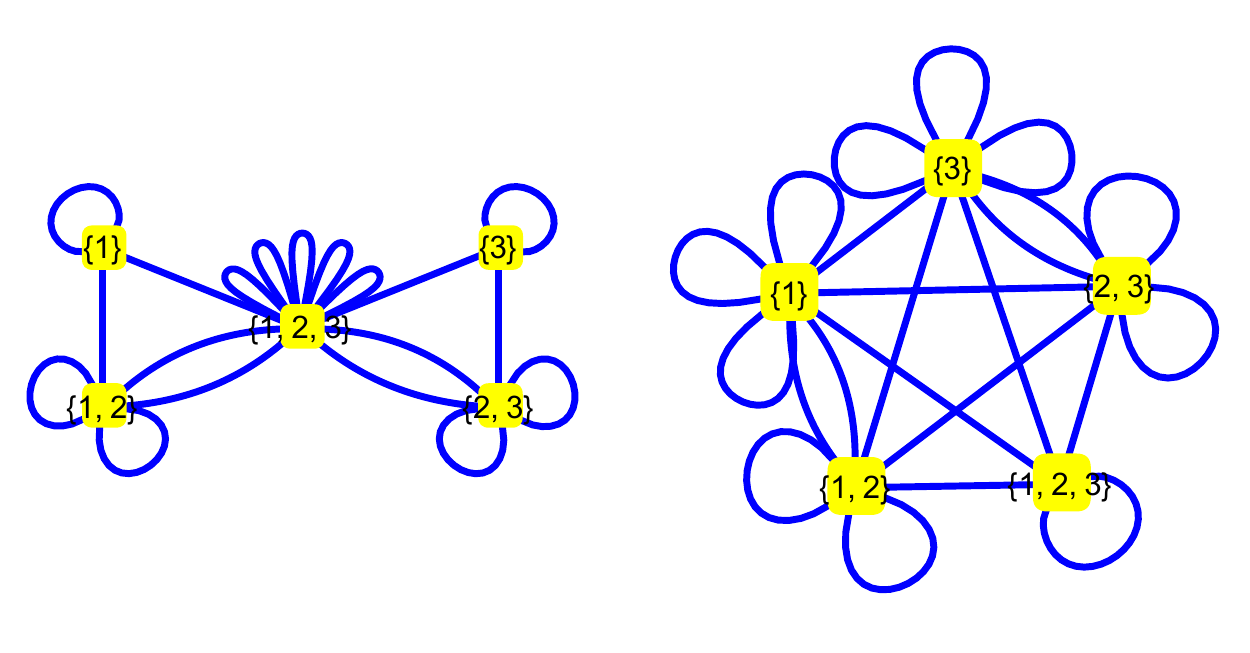}}
\label{example}
\caption{
We see the isospectral multi-graphs $\Gamma^{++},\Gamma^{--}$ produced in the code example,
where $G=\{\{1,2,3\},\{1,2\},\{2,3\},\{1\},\{3\}\}$, which is a
set of sets and not a simplicial complex. In the first case, two
nodes are connected by links encoding a common subset, in the
second case, two nodes are connected by links encoding a common superset. 
}
\end{figure}

\paragraph{}
And here is the parametrized version where a set with $k$ elements gets the energy $t^k$.
In this case, we scale the entries of $L^{--}$ a bit 
so that it matches with the inverse of $L^{++} = g$. The energy of a set of sets
is now the genus function $1-f_A(t)$. We definitely need a simplicial
complex for the parametrized result. 
The function $F[A]$ in the code is now the $f$-vector $f$ of $A$
and $f_A(t) = 1-\sum_k f_k(A) t^k$ so that $1-f_A(t)= \sum_k f_k(A) t^k$. 
This is equivalent to assigning the energy $t^k$ to a set with $k$ elements. 

\begin{tiny}
\lstset{language=Mathematica} \lstset{frameround=fttt}
\begin{lstlisting}[frame=single]
Generate[A_]:=Delete[Union[Sort[Flatten[Map[Subsets,A],1]]],1]; Clear[t]; 
RandomSets[n_,m_]:=Module[{A={},X=Range[n],k},Do[k:=1+Random[Integer,n-1];
  A=Append[A,Union[RandomChoice[X,k]]],{m}];Sort[Generate[A]]];
F[A_]:=If[u=Map[Length,A];A=={},A,Table[Length[Position[u,k]],{k,Max[u]}]];
f[A_,t_]:=Module[{R=F[A],k},If[A=={{}},1,1+Sum[R[[k]]*t^k,{k,Length[R]}]]];
G=RandomSets[4,6]; G=Generate[{{1,2},{2,3}}]; n=Length[G];  
S=Table[-(-1)^Length[G[[k]]]*If[k ==l,1,0],{k,n},{l,n}];energy[A_]:=1-f[A,t];
star[x_]:=Module[{u={}},Do[v=G[[k]];If[SubsetQ[v,x],u=Append[u,v]],{k,n}];u];
core[x_]:=Module[{u={}},Do[v=G[[k]];If[SubsetQ[x,v],u=Append[u,v]],{k,n}];u];
Wminus     = Table[Intersection[core[G[[k]]],core[G[[l]]]],{k,n},{l,n}];
Wplus      = Table[Intersection[star[G[[k]]],star[G[[l]]]],{k,n},{l,n}];
M=Table[Intersection[G[[k]],G[[l]]],{k,n},{l,n}]; 
Lminus     = Table[energy[Wminus[[k,l]]],    {k,n},{l,n}]; 
Lplus      = Table[energy[Wplus[[k,l]]],     {k,n},{l,n}];
g=S.Lplus.S;  L=Table[1-f[Wminus[[k,l]],t]/t^Length[M[[k,l]]],{k,n},{l,n}];
Potential  = Table[Sum[g[[k,l]],{l,n}],{k,n}];
Superdiag  = Table[S[[k,k]]*g[[k,k]],{k,n}];
Print[{
Total[Flatten[g]] == energy[G],               (* Potential energy         *)
Simplify[Tr[S.g]  == energy[G] ],             (* Spectral energy          *)
Det[g]==(-1)^(1-f[G,1])t^Total[Map[Length,G]],(* Unimodularity Theorem    *)
Superdiag == Potential,                       (* selfinteraction=potential*)
Simplify[g.L ==IdentityMatrix[n]]}]           (* g and L are inverse      *)
\end{lstlisting}
\end{tiny}

\paragraph{}
In the given case, where $G=\{\{1\},\{2\},\{3\},\{1,2\},\{2,3\}\}$, we have
$$ g = \left[ \begin{array}{ccccc}
                   -t^2-t & -t^2 & 0 & t^2 & 0 \\
                   -t^2 & -2 t^2-t & -t^2 & t^2 & t^2 \\
                   0 & -t^2 & -t^2-t & 0 & t^2 \\
                   t^2 & t^2 & 0 & -t^2 & 0 \\
                   0 & t^2 & t^2 & 0 & -t^2 \\
                  \end{array} \right]  \;  $$
and 
$$ L = \left[ \begin{array}{ccccc}
    1-\frac{t+1}{t} & 0 & 0 & 1-\frac{t+1}{t} & 0 \\
    0 & 1-\frac{t+1}{t} & 0 & 1-\frac{t+1}{t} & 1-\frac{t+1}{t} \\
    0 & 0 & 1-\frac{t+1}{t} & 0 & 1-\frac{t+1}{t} \\
    1-\frac{t+1}{t} & 1-\frac{t+1}{t} & 0 & 1-\frac{t^2+2 t+1}{t^2} &
      1-\frac{t+1}{t} \\
    0 & 1-\frac{t+1}{t} & 1-\frac{t+1}{t} & 1-\frac{t+1}{t} & 1-\frac{t^2+2
      t+1}{t^2} \\
   \end{array} \right]  \; . $$
The sum over all matrix elements $\sum_{x,y} g(x,y)$ in $g$ is $1-f_G(t) = -3t-2t^2$
which agrees with the super trace of $g$. 

\section{Some questions}

\paragraph{}
By choosing an energy function $h(x)$ with $k$ neegative values $-1$ and $n-k$ positive 
values $1$ produces non-negative symmetric $n \times n$ integral matrices $L$ with $L^{-1}$ integral
valued of that $L$ has $k$ negative and $n-k$ positive eigenvalues. This solves 
inverse problems for non-negative matrices like for example 
Corollary 2.1 in \cite{MincNonnegative}. 
Can one characterize the spectra of $n \times n$ matrices $L=L^{--}$ which 
appear for a set of $n$ sets $G$ and a function $h \in \{-1,1\}^n$? An inverse problem
is to get back $G$ from the spectrum. An easier task might be to reconstruct 
$G$ from knowing all the spectra for all possible energy functions $h \in \{-1,1\}^n$. 

\paragraph{}
One can wonder for which set $G$ of sets the matrices $L^{++}$ and $L^{--}$ are
completely positive definite in the sense of \cite{BermanMonderer}:
a matrix is completely positive if $A=B B^T$, where $B$ is a non-negative matrix.
Given a simplicial complex $G$ with $n$ elements, the Hodge matrix $H=(d+d^*)^2$ by 
definition is completely positive because the Dirac operator $D=d+d^*$ is a $0-1$
matrix and so non-negative. For the connection Laplacians this is still unclear
as there is no cohomology yet associated (except in one dimensions).

\paragraph{}
An other question is the multiplicity $m(G)$ of the eigenvalue $1$ in $L=L^{--}$ in the case
of constant energy $1$. It often 
correlates with the Betti numbers. For simplicial complexes obtained by 
one-dimensional grid graphs for example, it 
is always $b_0+b_1$. For complete complexes $G$, we often have $m(K_n)=1$ but $m(K_4)=5$
and $m(K_6)=19$ and $m(K_8)=69$. Investigating the structure of the eigenvalues of $L^{--}(G)$ which 
are the eigenvalues of $L^{++}(G)$ is in general not yet done. 

\begin{figure}[!htpb]
\scalebox{0.5}{\includegraphics{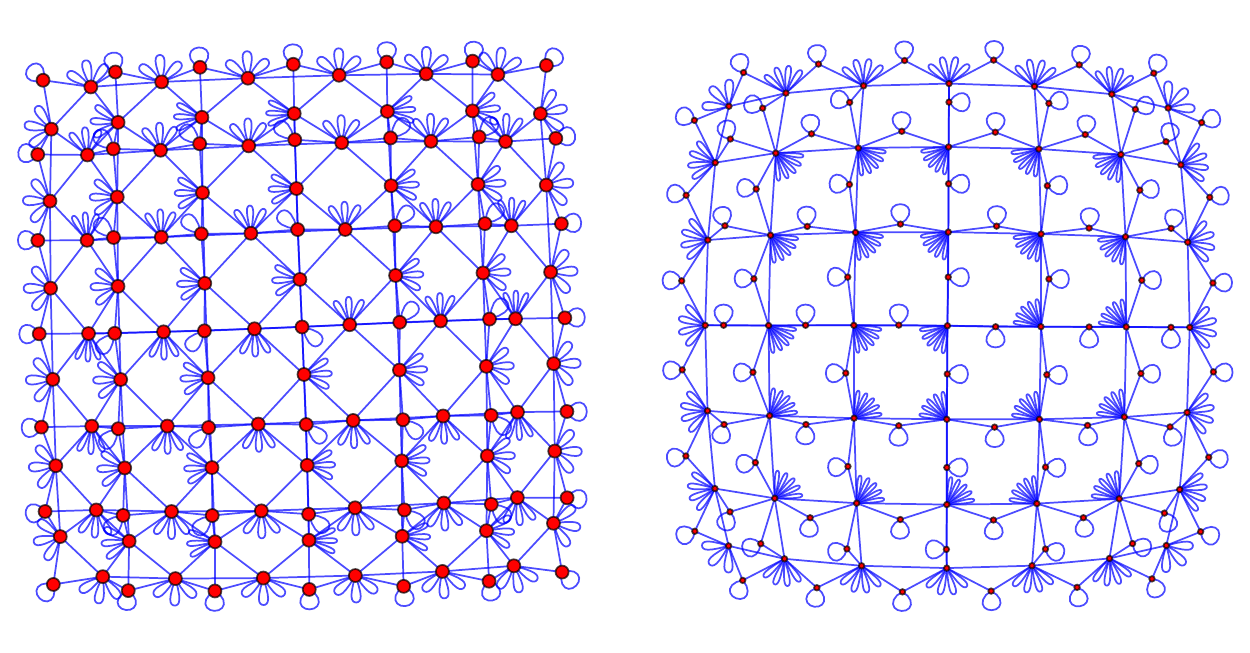}}
\scalebox{0.5}{\includegraphics{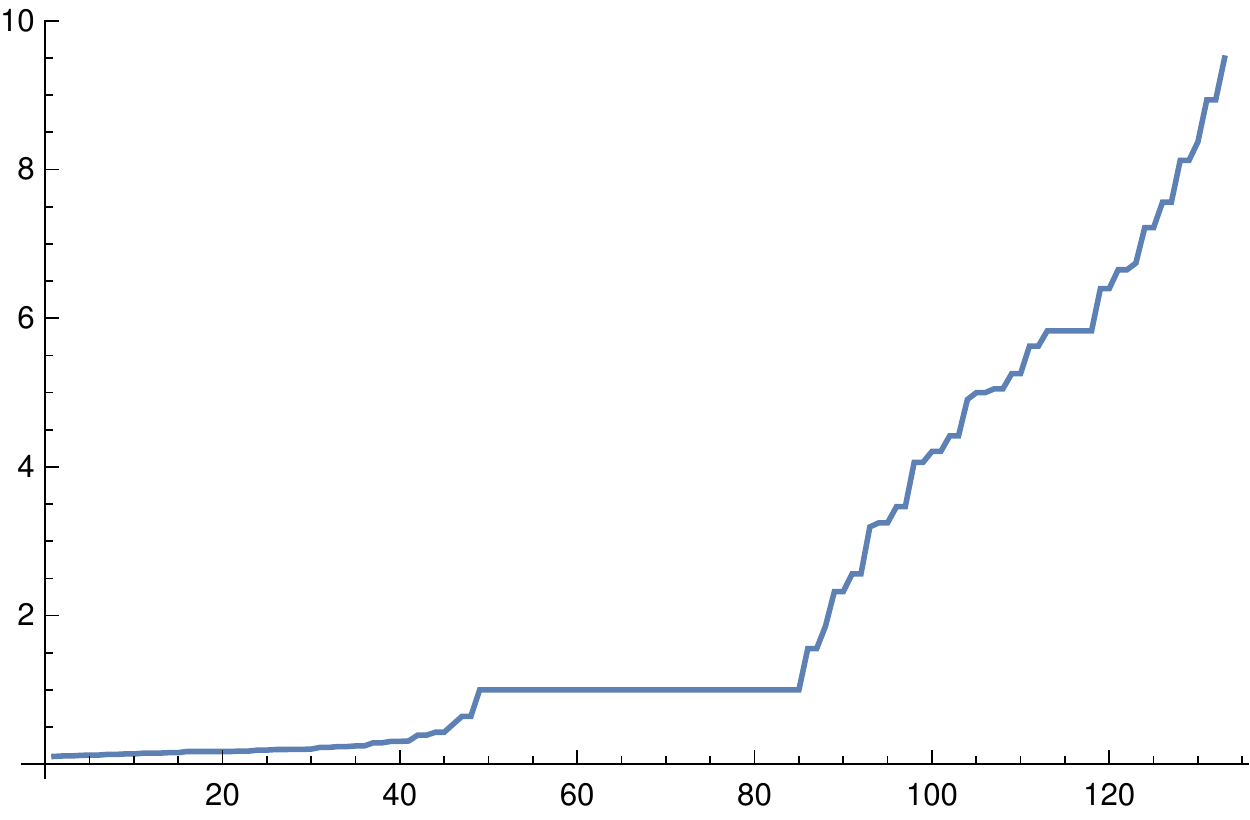}}
\label{grid}
\caption{
The Whitney complex $G$ of the grid graph $G(7,7)$ defines matrices $\Gamma^{++},\Gamma^{--}$
defined by the adjacency matrices $L^{++},L^{--}$. In the figure below we see the eigenvalues 
of $L^{++}$.  The flat large plateau consists of $37$ eigenvalues $1$. 
For grid graphs, we see that the number of eigenvalues $1$ is the sum of the 
Betti numbers of $G$. 
}
\end{figure}

\paragraph{}
The matrix $L$ defines a self-dual lattice packing
in $\mathbb{R}^n$. Every set of sets $G$ defines such a packing. The packing density is 
the same than than of the lattice $\mathbb{Z}^n$ as the minimal lattice distance is $1$. In the 
context of packings and more generally, for integral quadratic forms
one looks also at the $\theta$-function 
$$  \theta(z) = \sum_{m \in \mathbb{Z}^n} e^{2\pi i z Lm \cdot m} \; . $$
It is an other analytic way to describe the connection multi graph with adjacency matrix $L$ similarly as
the spectral zeta function 
$\zeta(s) = \sum_{k=1}^n \lambda_k^{-s}$ defined by the eigenvalues of $L$ 
or the Ihara zeta function $\zeta_I(s) = 1/\det(1-s L)$, a discrete analogue
of the Selberg zera function, do. 

\paragraph{}
It should be obvious that we get matrices $L^{++},L^{--}$ also if we start with 
differential complexes which are infinite. We can look at the infinite lattice $\mathbb{Z}^d$ for 
example and decorate various parts of the lattice to get periodic, almost periodic or random 
Schr\"odinger type matrices \cite{Cycon}. What remains true is that $L^{++}, L^{--}$ 
remain isospectral. While in the finite case, the matrices are diagonalizable, this is 
not necessarily the case in the infinite case as this requires a complete set of eigenfunctions
which one can not expect in general. 
In the periodic case for example the spectrum is continuous. In general, we expect
singular continuous spectrum to appear. All we know from general principles is that the 
density of states of $L^{++}$ and $L^{--}$ agree. A first thing to study is the one-dimensional 
Schr\"odinger case where we start with the simplicial complex of $\mathbb{Z}$. In that case
we know $L^{++},L^{--}$ in the limit as they are Barycentric limits which is completely understood
in one dimension but open in higher dimensions \cite{KnillBarycentric2}. 

\paragraph{}
In the periodic case $G=C_5$ for example with 
$$ G\{\{1\},\{1,2\},\{2\},\{2,3\},\{3\},\{3,4\},\{4\},\{4,5\},\{5\},\{5,1\}\} \; , $$
we have 
the isospectral periodic Schr\"odinger operators ``on a strip" (special T\"oplitz matrices): 
$$ L^{--} = \left[
\begin{array}{cccccccccc}
 1 & 1 & 0 & 0 & 0 & 0 & 0 & 0 & 0 & 1 \\
 1 & 3 & 1 & 1 & 0 & 0 & 0 & 0 & 0 & 1 \\
 0 & 1 & 1 & 1 & 0 & 0 & 0 & 0 & 0 & 0 \\
 0 & 1 & 1 & 3 & 1 & 1 & 0 & 0 & 0 & 0 \\
 0 & 0 & 0 & 1 & 1 & 1 & 0 & 0 & 0 & 0 \\
 0 & 0 & 0 & 1 & 1 & 3 & 1 & 1 & 0 & 0 \\
 0 & 0 & 0 & 0 & 0 & 1 & 1 & 1 & 0 & 0 \\
 0 & 0 & 0 & 0 & 0 & 1 & 1 & 3 & 1 & 1 \\
 0 & 0 & 0 & 0 & 0 & 0 & 0 & 1 & 1 & 1 \\
 1 & 1 & 0 & 0 & 0 & 0 & 0 & 1 & 1 & 3 \\
\end{array} \right] $$
and 
$$ L^{++} = \left[
\begin{array}{cccccccccc}
 3 & 1 & 1 & 0 & 0 & 0 & 0 & 0 & 1 & 1 \\
 1 & 1 & 1 & 0 & 0 & 0 & 0 & 0 & 0 & 0 \\
 1 & 1 & 3 & 1 & 1 & 0 & 0 & 0 & 0 & 0 \\
 0 & 0 & 1 & 1 & 1 & 0 & 0 & 0 & 0 & 0 \\
 0 & 0 & 1 & 1 & 3 & 1 & 1 & 0 & 0 & 0 \\
 0 & 0 & 0 & 0 & 1 & 1 & 1 & 0 & 0 & 0 \\
 0 & 0 & 0 & 0 & 1 & 1 & 3 & 1 & 1 & 0 \\
 0 & 0 & 0 & 0 & 0 & 0 & 1 & 1 & 1 & 0 \\
 1 & 0 & 0 & 0 & 0 & 0 & 1 & 1 & 3 & 1 \\
 1 & 0 & 0 & 0 & 0 & 0 & 0 & 0 & 1 & 1 \\
\end{array}
\right]  \; . $$
What happens here is that in the limit where the matrices are bounded operators on $l^2(\mathbb{Z})$, 
the two matrices are the same and invertible. Unlike in the usual discrete Schr\"odinger case (which 
is the Hodge case), where the spectral interval has $0$ at the boundary which leads in dynamical 
system cases to KAM situations as one needs then a strong implicit function theorem to 
perturb, the connection case with a mass gap is hyperbolic. 

\paragraph{}
In the above case, the isospectral property in the limit was trivial. If we take limits
of periodic graphs which are a bit fatter 

\begin{figure}[!htpb]
\scalebox{0.5}{\includegraphics{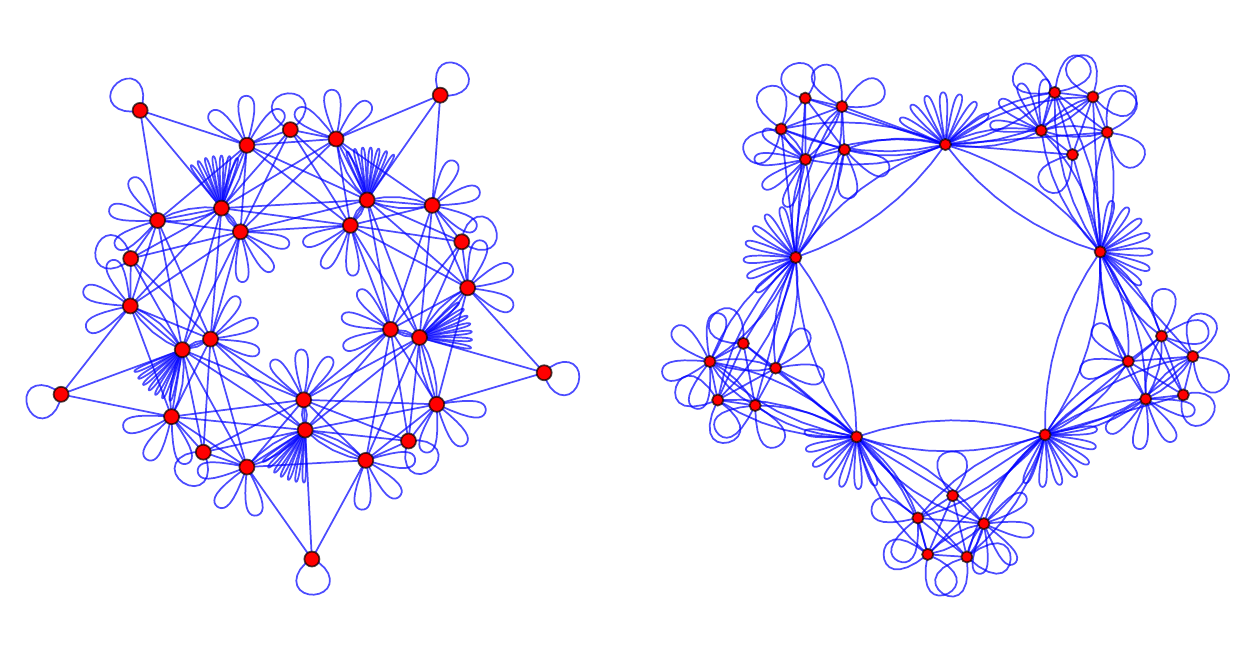}}
\label{artillery}
\caption{
Two isospectral periodic graphs $\Gamma^{++},\Gamma^{--}$ defined
by a simplicial complex from the connection graph of $C_{10}$ are
non-trivially isospectral in the limit. The Jacobi matrices 
$L^+$ and $L^-$ on $l^2(\mathbb{Z})$ have the same density of states.
}
\end{figure}

\paragraph{}
For which set of sets $G$ does the 15 theorem kick in? Here are two examples:
for the set of sets $G=\{ \{1,2\},\{1\} \}$ the ``komma" for which the connection Laplacian
for constant energy $h(x)=1$ gives the matrix
$L=\left[ \begin{array}{cc} 2 & 1 \\ 1 & 1 \end{array} \right]$ which is
the Arnold cat matrix, the quadratic form is $Q(x,y) = 2 x^2 + 2 x y + y^2$. 
It is not universal because the value $3$ is not attained. However, for 
$G= \{\{1\},\{2\},\{3\},\{1,3\}\}$, we have $L= \left[ \begin{array}{cccc}
                    1 & 0 & 0 & 1 \\
                    0 & 1 & 0 & 0 \\
                    0 & 0 & 1 & 1 \\
                    1 & 0 & 1 & 3 \\
                   \end{array} \right]$, which leads to a quadratic form
$Q(x,y,z,w)= 3 w^2+2 w (x+z)+x^2+y^2+z^2$ which is universal as all values from $\{1, \cdots, 15 \}$
are attained. Preliminary experiments show that only very small sets of sets lead to non-universal
quadratic forms. 

\bibliographystyle{plain}

\end{document}